\theoremstyle{plain}
\newtheorem{thms}{Theorem}
\newtheorem{thm}{Theorem}
\newtheorem{lem}{Lemma}
\newtheorem{prop}{Proposition}
\theoremstyle{definition}
\newtheorem{defi}{Definition}
\newtheorem{ass}{Assumption}
\theoremstyle{plain}
\newtheorem{rmq}{Remark}
\newcommand{\N}{\ensuremath{\mathbb{N}}}
\newcommand{\eps}{\ensuremath{\varepsilon}} 
\newcommand{\Op}{\ensuremath{\mathcal{L}}}
\newcommand{\Hyp}{\ensuremath{\mathbb{H}}}
\newcommand{\R}{\ensuremath{\mathbb{R}}}
\newcommand{\Prob}{\ensuremath{\mathbb{P}}}
\newcommand{\E}{\ensuremath{\mathbb{E}}}
\newcommand*\rel@kern[1]{\kern#1\dimexpr\macc@kerna}
\newcommand*\widebar[1]{%
  \begingroup
  \def\mathaccent##1##2{%
    \rel@kern{0.8}%
    \overline{\rel@kern{-0.8}\macc@nucleus\rel@kern{0.2}}%
    \rel@kern{-0.2}%
  }%
  \macc@depth\@ne
  \let\math@bgroup\@empty \let\math@egroup\macc@set@skewchar
  \mathsurround\z@ \frozen@everymath{\mathgroup\macc@group\relax}%
  \macc@set@skewchar\relax
  \let\mathaccentV\macc@nested@a
  \macc@nested@a\relax111{#1}%
  \endgroup
}
\begin{document}
\title{Lyapunov spectrum of a relativistic stochastic flow in the Poincar\'e group.}
\author{Camille Tardif \footnote{\texttt{camille.tardif@uni.lu} Universit\'e du Luxembourg.} \\  }

\maketitle
\abstract{ We determine the Lyapunov spectrum and stable manifolds of some stochastic flows on the Poincar\'e group associated to Dudley's relativistic processes.}

\vspace{1cm}
{ \bf Key words: } Relativistic processes. L\'evy processes in Lie groups. Poincar\'e group. Lyapunov spectrum. Hyperbolic dynamics.

{\bf AMS Subject Classification:} 37H15, 60G51, 83A05
\tableofcontents

\section{Introduction}
In 1966 Dudley \cite{Dud66} defined a class of relativistic processes with Lorentzian-covariant dynamics in the framework of special relativity. Such  a process $\xi_t$ with values in Minkowski space-time $\R^{1,d}$, is differentiable and has velocity smaller than the speed of light. So it can be parametrized by its proper time, which amounts to impose to the velocity $\dot{\xi}_t$ to be an element of the unit pseudo sphere $\Hyp^d$of $\R^{1,d}$. The restriction to the tangent  space of $\Hyp^d$ of Minkowski ambient pseudo-metric turns $\Hyp^d$ into a Riemannian manifold of constant negative curvature. The invariance of the process $(\dot{\xi}_t, \xi_t) $ by the natural action of the set of Lorentz transforms on $\Hyp^d \times \R^{1,d}$ imposes to the laws of  $\dot{\xi}_t$  to be invariant by the action of the isometries of $\Hyp^d$. Among this class of relativistic processes, there is essentially only one which is continuous. It corresponds to the case where $\dot{\xi}_t$ is a Riemannian Brownian motion in the hyperbolic space and in this case $(\dot{\xi}_t, \xi_t)$ is called \emph{Dudley diffusion}. Forty years after this seminal work, Franchi and Le Jan \cite{FLJ07} extended Dudley diffusion to the framework of any Lorentz manifold. They defined relativistic processes with Lorentzian-covariant dynamics on generic Lorentzian manifolds by rolling without slipping a Dudley diffusion on the unit tangent space. They studied the asymptotic behavior of such diffusion in the Schwarzschild space-time. Bailleul \cite{Bail08} succeeded to compute the Poisson boundary of Dudley diffusion in  Minkowski space-time and showed that it coincides with the causal boundary of $\R^{1,d}$. The asymptotic behavior of relativistic diffusions was investigated in other non flat Lorentzian manifolds (\cite{Angst09}, \cite{Franchi09}) with the aim of  describing how the asymptotic behavior of the diffusion reflects the asymptotic geometry of the manifold. 

In this work we ask a new question concerning these processes dealing with the asymptotic behaviour of some stochastic flow associated to it. As Brownian motion on a Riemannian manifold , the relativistic diffusion \cite{FLJ07}  is obtained by projecting a diffusion process with values in the orthonormal frame bundle, solution of a stochastic differential equation. This SDE generates a stochastic flow which, in our Lorentzian framework, consists in a stochastic perturbation of the geodesic flow. Existence and computation, for example, of the Lyapunov spectrum and stable manifolds of these flows may be investigated in the same way as it was done by Carverhill and Elworthy \cite{Carv/Elw} for the canonical stochastic flow in the Riemannian framework. The main difficulty to study the flow of relativistic processes comes from the fact that the orthonormal frame bundle of a Lorentz manifold is never compact. Nevertheless in this article we provide a study of the asymptotic dynamics of the stochastic flow generated by Dudley processes in  the Minkowski space-time (without restricting ourselves to the diffusion case). Precisely, in this framework, the orthonormal frame bundle is identified with the Poincar\'e group $\widetilde{G}:= PSO(1,d) \ltimes \R^{1,d}$ and  denoting $\phi_t$ the left invariant stochastic flow associated to one of Dudley's processes in $\widetilde{G}$ we obtain the description of the Lyapunov spectrum and the stable manifolds of $\varphi_t$. Precisely we obtain the following two results.

\begin{thms}[Lyapunov spectrum]
There exist a constant $\alpha >0 $ and two asymptotic random Lie sub-algebras $V_\infty^- \subset V_\infty^0$ of $\mathrm{Lie}(\widetilde{G})$ such that  for some norm $\Vert \cdot \Vert$ on  $\mathrm{Lie}(  \widetilde{G} )$ and  $\widetilde{X} \in \mathrm{Lie}(  \widetilde{G} )$ we have  for almost every trajectory
\[
\frac{1}{t} \log \Vert  d \varphi_t (\mathrm{Id})( \widetilde{X})  \Vert_{\varphi_t(\mathrm{Id})} \underset{t \to +\infty}{\longrightarrow} \left \{  \begin{matrix} \alpha & \mathrm{if} &  \widetilde{X} \in  \mathrm{Lie}(  \widetilde{G} ) \setminus V_\infty^0 \\ 0 & \mathrm{if} &  \widetilde{X} \in  V_\infty^0 \setminus V_\infty^-  \\ -\alpha & \mathrm{if} &  \widetilde{X} \in  V_\infty^- \setminus \{ 0 \}  \end{matrix}\right.
\]
\end{thms}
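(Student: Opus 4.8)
The plan is to reduce the Lyapunov spectrum of $\varphi_t$ to the growth of the adjoint cocycle of the driving process, and then to read off the three exponents from the restricted--root decomposition of $\mathrm{Lie}(\widetilde G)=\mathfrak{so}(1,d)\oplus\R^{1,d}$ under a Cartan subalgebra. First I would trivialise the derivative flow. Since $\varphi_t$ is left invariant we may write $\varphi_t(g)=g\,\xi_t$ with $\xi_t:=\varphi_t(\mathrm{Id})$, so that $d\varphi_t(\mathrm{Id})=dR_{\xi_t}$. Choosing a left-invariant Riemannian metric and identifying each tangent space with $\mathrm{Lie}(\widetilde G)$ by left translation gives
\[
\frac{1}{t}\log\big\Vert d\varphi_t(\mathrm{Id})(\widetilde X)\big\Vert_{\varphi_t(\mathrm{Id})}=\frac{1}{t}\log\big\Vert \mathrm{Ad}(\xi_t^{-1})\,\widetilde X\big\Vert .
\]
Thus the whole statement concerns the asymptotics of the linear cocycle $\mathrm{Ad}(\xi_t^{-1})$ on $\mathrm{Lie}(\widetilde G)$.

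Writing $\xi_t=(g_t,x_t)$ with $g_t\in PSO(1,d)$ and $x_t\in\R^{1,d}$, I would analyse the semisimple block $\mathrm{Ad}(g_t^{-1})$ first. Fix the boost generator $H$ of a Cartan subalgebra $\mathfrak a\cong\R$; the rank-one structure of $\mathfrak{so}(1,d)$ together with the standard representation on $\R^{1,d}$ yields a joint $\mathrm{ad}(H)$-eigenspace splitting $\mathrm{Lie}(\widetilde G)=\mathfrak g_{+}\oplus\mathfrak g_0\oplus\mathfrak g_{-}$ with eigenvalues $+c,0,-c$ (the root and the null weights having the same magnitude $c>0$), where $\mathfrak g_+$ and $\mathfrak g_0\oplus\mathfrak g_+$ are subalgebras since $[\mathfrak g_+,\mathfrak g_+]\subseteq\mathfrak g_{+2}=0$. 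Using an $\mathrm{Ad}(K)$-invariant norm and the Cartan decomposition $g_t=k_t\exp(a_t)k_t'$, $K$-invariance kills the outer factor, leaving $\Vert\mathrm{Ad}(g_t^{-1})\widetilde X\Vert=\Vert\mathrm{Ad}(\exp(-a_t))\,\mathrm{Ad}(k_t^{-1})\widetilde X\Vert$. The a.s. linear drift $a_t/t\to\ell H$ with $\ell>0$ (the escape rate of $\dot\xi_t$ in $\Hyp^d$, from Kingman's subadditive theorem or, in the non-diffusive case, from Liao's results on L\'evy processes in Lie groups) produces exactly the rates $e^{\mp \alpha t}$ on $\mathfrak g_\pm$ and sub-exponential growth on $\mathfrak g_0$, where $\alpha:=c\ell$. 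Combined with the a.s. convergence $k_t\to k_\infty$ forced by the convergence of $\dot\xi_t$ to its boundary point $\theta_\infty\in\s$, I would set
\[
V_\infty^-:=\mathrm{Ad}(k_\infty)\,\mathfrak g_+,\qquad V_\infty^0:=\mathrm{Ad}(k_\infty)\,(\mathfrak g_0\oplus\mathfrak g_+),
\]
the announced nested random subalgebras, and check that $\widetilde X\notin V_\infty^0$, $\widetilde X\in V_\infty^0\setminus V_\infty^-$, $\widetilde X\in V_\infty^-\setminus\{0\}$ give respectively the exponents $\alpha,0,-\alpha$.

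It then remains to incorporate the affine part. A direct computation gives $\mathrm{Ad}(\xi_t^{-1})(\omega,v)=\big(\mathrm{Ad}(g_t^{-1})\omega,\,g_t^{-1}v+(\mathrm{Ad}(g_t^{-1})\omega)\,g_t^{-1}x_t\big)$, so the cocycle is block triangular: its two diagonal blocks are the copies $\mathrm{Ad}(g_t^{-1})$ on $\mathfrak{so}(1,d)$ and $g_t^{-1}$ on $\R^{1,d}$, each with spectrum $\{-\alpha,0,\alpha\}$, and the only coupling is through $g_t^{-1}x_t$. Since $\dot\xi_t$ is a unit timelike vector, $x_t=\int_0^t\dot\xi_s\,ds$ aligns asymptotically with the escape direction and $\Vert g_t^{-1}x_t\Vert$ is at most polynomial in $t$; as Lyapunov exponents are insensitive to polynomial prefactors, the coupling changes no exponent, and the spectrum of $\mathrm{Ad}(\xi_t^{-1})$ is the union of the two diagonal spectra, namely $\{-\alpha,0,\alpha\}$, with stable and neutral directions $V_\infty^-\subset V_\infty^0$ as above.

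The main obstacle, which is precisely the difficulty flagged in the introduction, is the non-compactness of the frame bundle, i.e.\ the control of the affine block. One must show that $\Vert g_t^{-1}x_t\Vert$ really is sub-exponential — a naive bound gives only $|x_t|\lesssim t$, while the contracting boost $\exp(-a_t)$ inside $g_t^{-1}$ could a priori dilate the transverse components by $e^{\alpha t}$ — and, more delicately, that the coupling never turns a neutral direction into an exponentially growing or decaying one. This requires an estimate of $g_t^{-1}x_t$ finer than a norm bound, exploiting the asymptotic alignment of $x_t$ with $\theta_\infty$. A secondary difficulty is to upgrade the radial drift $a_t/t\to\ell H$ into the statement that the transverse fluctuations of $a_t$ are $o(t)$ and that $k_t$ genuinely converges, so that $V_\infty^-$ and $V_\infty^0$ exist almost surely and the three values $\pm\alpha,0$ are attained as true limits rather than merely as $\limsup/\liminf$.
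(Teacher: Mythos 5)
Your reduction to the adjoint cocycle $\mathrm{Ad}(\tilde g_t^{-1})$ and the identification of the candidate exponents $\pm\alpha,0$ with the weight spaces of the boost are exactly the paper's starting point. The gap is in the affine block, and it is not a technicality you can defer: the claim that $\Vert g_t^{-1}\xi_t\Vert$ is at most polynomial in $t$ is false. Writing $g_t=n_ta_tk_t$ in Iwasawa form, Proposition \ref{xit} shows that the $U^-$--component of $n_\infty^{-1}\xi_t$ converges to $\lambda_\infty(e_0-e_1)$ with $\lambda_\infty>0$ almost surely; since $a_t^{-1}$ dilates the direction $e_0-e_1$ by $e^{\alpha_t}\sim e^{\alpha t}$, the off-diagonal coupling $g_t^{-1}\xi_t$ grows exponentially at rate $\alpha$. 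A block-triangular cocycle with exponentially large off-diagonal term still has the union of the diagonal spectra as its set of exponents, but the Oseledets filtration is \emph{not} the direct sum of the diagonal filtrations, and it is the filtration that the theorem asserts. The exponential growth is cancelled only after recentering $\xi_t$ around its asymptotic limit: the paper factors $\tilde g_t^{-1}=\tilde h_t^{-1}\,(e^{-t\alpha V_1},0)\,\tilde g_\infty^{-1}$ with $\tilde g_\infty:=(n_\infty,\lambda_\infty(e_0-e_1))$ --- note the translation part --- and proves $\frac1t\log\Vert\mathrm{Ad}(\tilde h_t)^{\pm1}\Vert\to0$ from Propositions \ref{convht} and \ref{xit} together with Lemma \ref{norms}.

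Consequently your definition of the random subalgebras is wrong, not merely differently parametrized. The correct objects are $V_\infty^-=\mathrm{Ad}(\tilde g_\infty)(\widetilde U^{+})$ and $V_\infty^0=\mathrm{Ad}(\tilde g_\infty)(\widetilde U^{0}\oplus\widetilde U^{+})$, and the conjugation by the translation $\lambda_\infty(e_0-e_1)$ genuinely shears these subspaces: by \eqref{Ad}, $\mathrm{Ad}(\mathrm{Id},\xi)(X,x)=(X,\,x-X\xi)$, and for $0\neq X\in\widebar{\mathcal{N}}$ one has $X(e_0-e_1)\in U^{0}\setminus\{0\}$, so $\mathrm{Ad}(\mathrm{Id},\lambda_\infty(e_0-e_1))(\widetilde U^{+})\neq\widetilde U^{+}$. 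Your $V_\infty^\pm$, built from the linear part alone (an $\mathrm{Ad}(k_\infty)$--conjugate of the weight spaces), therefore do not carry the claimed exponents; on them the $-\alpha$ and $0$ rates are destroyed by the exponentially growing coupling you dismissed. The remaining ingredients you invoke without proof --- the a.s.\ drift $\alpha_t/t\to\alpha>0$, the convergence of the angular/nilpotent part with the $o(t)$ control on the remainder, and the subexponential estimates on the translation component --- are precisely Propositions \ref{alpha}, \ref{supremum}, \ref{convht} and \ref{xit}; note also that under the paper's hypothesis $\int_1^{+\infty}r\,\nu(dr)<+\infty$ one cannot simply quote Liao, whose results require integrability of $r$ against $\nu$ on all of $\R_+^*$.
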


\begin{thms}[Stable manifolds]
Denote by  $\mathcal{V}_{\infty}^{-}:=\exp ( V_\infty^-)$ and $d$  the distance associated to a left invariant and $\mathrm{Ad}(SO(d))$-invariant Riemanian metric in $\widetilde{G}$. Then for any two distinct points $\tilde{g}'$ and $\tilde{g}$  in $\widetilde{G}$ we have
 \begin{itemize}
\item If $\tilde{g}' \in \tilde{g} \mathcal{V}_{\infty}^{-}$ then 
\[
\frac{1}{t} \log d \left (  \varphi_t (\tilde{g}) , \varphi_t (\tilde{g}') \right ) \underset{ t \to + \infty }{\longrightarrow} -\alpha.
\]
\item If $\tilde{g}' \notin \tilde{g} \mathcal{V}_{\infty}^{-}$ then
\[
\liminf_{t \to \infty} d\left (  \varphi_t (\tilde{g}) , \varphi_t (\tilde{g}') \right ) >0.
\]
\end{itemize}

\end{thms}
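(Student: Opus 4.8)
The plan is to reduce the two-point estimate to the one-point (conjugation) dynamics already governed by the Lyapunov spectrum theorem. Being left invariant, the flow obeys the cocycle identity $\varphi_t(\tilde g)=\tilde g\,\xi_t$ with $\xi_t:=\varphi_t(\mathrm{Id})$, i.e.\ $\varphi_t=R_{\xi_t}$. Applying left invariance of $d$ twice gives
\[
d\big(\varphi_t(\tilde g),\varphi_t(\tilde g')\big)=d\big(\tilde g\xi_t,\tilde g'\xi_t\big)=d\big(\mathrm{Id},\,\xi_t^{-1}h\,\xi_t\big),\qquad h:=\tilde g^{-1}\tilde g'\neq\mathrm{Id},
\]
so everything reduces to the behaviour of the conjugate $C_t(h):=\xi_t^{-1}h\,\xi_t$. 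The same computation at the infinitesimal level shows $\|d\varphi_t(\mathrm{Id})(\widetilde X)\|_{\varphi_t(\mathrm{Id})}=\|\mathrm{Ad}(\xi_t^{-1})\widetilde X\|$; hence Theorem 1 (Lyapunov spectrum) is exactly the assertion that $\tfrac1t\log\|\mathrm{Ad}(\xi_t^{-1})\widetilde X\|$ tends to $-\alpha,0,\alpha$ according to the position of $\widetilde X$ in the flag $V_\infty^-\subset V_\infty^0\subset\mathrm{Lie}(\widetilde G)$. The $\mathrm{Ad}(SO(d))$-invariance of $d$ will serve to make the local comparisons below uniform.

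For the first item, write $h=\exp(\widetilde X)$ with $\widetilde X\in V_\infty^-\setminus\{0\}$. Since conjugation is intertwined with $\exp$ by $\mathrm{Ad}$, one has $C_t(h)=\exp\!\big(\mathrm{Ad}(\xi_t^{-1})\widetilde X\big)$, and by Theorem 1 the argument tends to $0$ at exponential rate $-\alpha$. On a fixed neighbourhood of $0$ the group exponential is a bi-Lipschitz equivalence between $\|\cdot\|$ and $d(\mathrm{Id},\exp(\cdot))$, so for large $t$ we have $d(\mathrm{Id},C_t(h))\asymp\|\mathrm{Ad}(\xi_t^{-1})\widetilde X\|$; taking $\tfrac1t\log$ yields the limit $-\alpha$.

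For the second item (the main obstacle), I must upgrade "$h\notin\mathcal V_\infty^-$" to the quantitative statement $\liminf_t d(\mathrm{Id},C_t(h))>0$, and for this I will use the concrete realisation of the flag built in the proof of Theorem 1: almost surely the hyperbolic velocity converges to a boundary point $\theta_\infty\in\partial\Hyp^d$, relative to which $\mathrm{Lie}(\widetilde G)=\mathfrak{so}(1,d)\ltimes\R^{1,d}$ splits into contracted, neutral and expanded pieces under $\mathrm{Ad}(\xi_t^{-1})$, with $V_\infty^-$ the contracted (horospherical $\oplus$ contracted lightlike translation) subalgebra and $V_\infty^0$ its sum with the centraliser directions ($\mathfrak m\oplus\mathfrak a$ and the transverse translations). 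Writing $h=(\Lambda,a)$ and $\xi_t=(g_t,x_t)$, the semidirect product law gives
\[
C_t(h)=\big(g_t^{-1}\Lambda g_t,\; g_t^{-1}a+g_t^{-1}(\Lambda-\mathrm{Id})x_t\big),
\]
so $C_t(h)\to\mathrm{Id}$ forces both $g_t^{-1}\Lambda g_t\to\mathrm{Id}$ and the translation term $\to 0$. The Lorentz condition confines $\Lambda$ to the contracting horospherical subgroup: a nontrivial compact ($\mathfrak m$, or more generally $K$) component is preserved and keeps $g_t^{-1}\Lambda g_t$ at positive distance from $\mathrm{Id}$, while an expanding-nilpotent component drives $g_t^{-1}\Lambda g_t$ out of every compact set and cannot recur because $\exp$ is proper on that nilpotent factor. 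For pure translations the dichotomy is already transparent: $|g_t^{-1}a|$ either decays (contracted line, $h\in\mathcal V_\infty^-$), stays of order $|a|>0$ (transverse directions), or blows up (expanded line).

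The genuinely delicate point, and the one I expect to be the crux, is the cross term $g_t^{-1}(\Lambda-\mathrm{Id})x_t$ when $\Lambda\neq\mathrm{Id}$: here the linearly escaping space-time position $x_t$ couples the Lorentz and translation factors, and one must rule out a cancellation with $g_t^{-1}a$ that would feign convergence to $0$. I would control it through the almost-sure asymptotics of $x_t$ established for Dudley's process (its direction aligns with $\theta_\infty$, so $(\Lambda-\mathrm{Id})x_t$ is forced into a prescribed subspace whose image under $g_t^{-1}$ cannot vanish unless the horospherical constraint on $\Lambda$ together with the constraint on $a$ already place $h$ in $\exp(V_\infty^-)$). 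Since $\mathcal V_\infty^-=\exp(V_\infty^-)$ is a closed embedded subgroup and every complementary direction is either expanding (where $d(\mathrm{Id},\cdot)\to+\infty$) or neutral and compact (where it is bounded away from $\mathrm{Id}$), these estimates combine into a uniform lower bound $\liminf_t d(\mathrm{Id},C_t(h))>0$, which is the second item.
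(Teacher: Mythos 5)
Your reduction to the conjugation dynamics, $d(\varphi_t(\tilde g),\varphi_t(\tilde g'))=d(\mathrm{Id},\tilde g_t^{-1}\exp(\widetilde Y)\tilde g_t)=d(\mathrm{Id},\exp(\mathrm{Ad}(\tilde g_t^{-1})\widetilde Y))$, and your treatment of the first item via the local bi-Lipschitz equivalence between $\Vert\cdot\Vert$ and $d(\mathrm{Id},\exp(\cdot))$ near $0$ are exactly the paper's argument (items $iii)$ and $iv)$ of its Proposition on the metric $d$). The second item, however, is where the real work lies, and your proposal leaves genuine gaps there. First, your claim that every complementary direction is ``either expanding \dots or neutral and compact'' is false: the neutral block $V_\infty^0\setminus V_\infty^-$ contains the noncompact abelian direction $\mathcal A$ and the noncompact transverse translations $U^0$. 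For a nontrivial $am\in AM$ one must prove the \emph{uniform} lower bound $\inf_{g\in G} d(\mathrm{Id},g^{-1}amg)>0$; this is not automatic (the conjugacy class is noncompact) and the paper devotes a lemma to it, combining a triangle-inequality trick for the $M$-part with the explicit hyperbolic estimate $\cosh r(g^{-1}amg)\geq\cosh(u)$ for $a=\exp(uV_1)$. You assert the conclusion without this.

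Second, you correctly identify the cross term $g_t^{-1}(\Lambda-\mathrm{Id})x_t$ as the crux but then only say you ``would control it'' via the asymptotics of $x_t$ --- that is a plan, not a proof, and the asymptotic alignment of $x_t$ with $\theta_\infty$ by itself does not rule out cancellation against $g_t^{-1}a$. The paper's resolution is different and essentially algebraic: after conjugating by $\tilde g_\infty$ one may assume the offending element is $\widetilde X=(X^+,x^++x^0)$ with $x^0\in U^0\setminus\{0\}$, and then $\exp(\widetilde X)=(\mathrm{Id},\xi)(\exp(X^+),0)$ with $\xi=x^0+x^++\tfrac12X^+x^0$ satisfying $q(\xi)=q(x^0)<0$ because the correction terms lie in the lightlike line $U^+$ which is $q$-orthogonal to $U^0$ and isotropic. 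Since $q$ is invariant under the whole Lorentz group, every image $g\xi$ has Euclidean norm bounded below by $\sqrt{-q(x^0)}$ uniformly in $g$, which kills any possible cancellation in one stroke. Without this $q$-invariance argument (or a substitute of comparable strength), your case analysis does not close, so the second bullet of the theorem remains unproved in your write-up.
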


We begin by constructing, in section \ref{Dudproc}, Dudley processes as  projections of left L\'evy processes on the Poincar\'e group $\widetilde{G}$, identified with the orthonormal frame bundle of the Minkowski space-time. These L\'evy processes are solutions of stochastic integral equations and induce a left invariant stochastic flow $\varphi_t$ in $\widetilde{G}$. In section \ref{asympt} we find the asymptotic behavior of Dudley processes and exhibit the asymptotic random variables $(\theta_\infty, \lambda_\infty) \in \mathbb{S}^{d-1} \times \R^{*}_{+}$. Finally in section \ref{LyapSpec}  we prove Theorems \ref{Lyap} and \ref{stable} and explicit the projection of the stable manifold in $\Hyp^d \times \R^{1,d}$ by showing that it corresponds to a skew product of a horosphere by a line.

Note that stochastic flows generated by L\'evy processes on semi-simple Lie groups were intensively studied by Liao (\cite{Liao02}, \cite{Liao01}, \cite{Liao04}). But his results cannot be used directly here since our L\'evy processes lie in the Poincar\'e group which is not semi-simple. Moreover in our work we suppose only that the Levy measure is integrable at infinity whereas Liao  \cite{Liao04} request the entire integrability of it. 
 
Our work is also strongly inspirited by the work of Bailleul and Raugi \cite{Bail/Raug} where the authors used Raugi's methods \cite{Raugi77} to find the Poisson boundary of Dudley diffusion.


\section{Dudley processes and their lift in the Poincar\'e group}\label{Dudproc}

We present in this section the geometrical framework of special relativity and define a natural  class of relativistic Markov processes with Lorentzian-covariant dynamics introduced by Dudley in \cite{Dud66}. They are obtained by projecting  left L\'evy processes with values in the Poincar\'e group and are described by two parameters: a diffusion coefficient $\sigma \in \R$ and a jump intensity L\'evy measure $\nu$ on $\R^*_+$.

\subsection{ Minkowski space-time and Poincar\'e group} The Minkowski space-time  $\R^{1,d}$ is  $\R \times \R^d$ endowed with the Lorentz quadratic form $q$ defined by 
\[
\forall \xi= (\xi^0, \xi^1, \dots, \xi^d) \in \R \times \R^d , \quad \quad q(\xi) = \left (\xi^{0} \right )^{2} -  \left (\xi^{1} \right )^{2} - \cdots  -  \left (\xi^{d} \right )^{2}.
\]
We denote by $\vec{\xi}:= ( \xi^1, \dots, \xi^d )^t$ the space component of $\xi$.

Set 
\[
Q = \mathrm{Diag} (1, -1, \dots, -1 )
\]
the matrix of $q$ in the canonical basis $(e_0, e_1, \dots, e_d)$. Time orientation is given by the constant vector field $e_0$ and some $\xi \in \R^{1,d}$ is said to be future oriented when $q( \xi , e_0) >0 $. A path $\gamma_s$ in $\R^{1,d}$ is said to be \emph{time-like} when it is differentiable almost everywhere and $q(\dot{\gamma}_s) >0 $ and $q(\dot{\gamma}_s, e_0) >0$. 
The Poincar\'e group is the group of affine $q$-isometries which preserve orientation and time-orientation. It is the semi-direct product connected  group \[ \widetilde{G} := PSO(1,d) \ltimes \R^{1,d} \] where $G:=PSO(1,d)$ denotes the group of linear $q$-isometries which preserve orientation and time-orientation. An element $\tilde{g}= (g, \xi) \in \widetilde{G}$ is made up of its linear part $g \in G$ and its translation part $\xi$. We identify $G$ with the sub-group of $\widetilde{G}$ which fixes $0$. By this way, we identify $\R^{1,d}$ with the homogeneous space $\widetilde{G}/G$. The identity element of $\widetilde{G}$ and $G$ is denoted by $\mathrm{Id}$ (thus for us $\mathrm{Id}= (\mathrm{Id},0)$).  At $\tilde{g} =( g, \xi ) \in \widetilde{G}$ we associate the affine frame $ \left ( (g(e_0), g(e_1), \dots, g(e_d)) ; \xi \right ) $ of $\R^{1,d}$ and $\widetilde{G}$ is identified with the bundle of $q$-orthonormal, oriented and time-oriented, frames over $\R^{1,d}$. We denote by
\[
\begin{matrix}
\tilde{\pi}: & \widetilde{G} & \longrightarrow & \R^{1,d}  \\
& \tilde{g}=(g, \xi) & \longmapsto & \xi
\end{matrix}
\]
the projection associated to this trivial fibration and  $G =\tilde{\pi}^{-1} \{  0 \}$. The canonical basis being fixed we identify $G$ with the matrix group
\[
G= \left \{ g \in \mathrm{SL}(\R^{d+1} ), \ g Q g^t =  Q , \  q(g(e_0), e_0 ) >0  \right \},
\]
and its Lie algebra is
\begin{align*}
\mathrm{Lie}(G) &= \left \{ X \in \mathcal{M}_{d+1}(\R),  \quad  X Q  -Q X^t = 0 \right \} \\
&= \left \{ \left (  \begin{matrix} 0 & b^t \\ b & C  \end{matrix} \right ),  \  \ b \in \R^{d} , \  C \in \mathcal{M}_{d}(\R) \ \text{s.t} \ C=-C^t  \right \}.
\end{align*}
We have $\mathrm{Lie} (\widetilde{G}) = \mathrm{Lie}(G) \times \R^{1,d}$ and for $\widetilde{X}= (X, x) , \widetilde{Y}=(Y,y) \in \mathrm{Lie} (\widetilde{G}) $
\[
[\widetilde{X}, \widetilde{Y}] = ( [X,Y], Xy-Yx).
\]
We identify $\mathrm{Lie}(G)$ with  $\mathrm{Ker}(d_{\mathrm{Id}} \tilde{\pi} )$ and its elements are vertical for the fibration $\tilde{\pi}$. 
We set 
\begin{align*}
V_i & := e_0 e_i^{t} + e_i e_0^{t} \quad i=1, \dots, d \\
V_{ij} & := [V_i, V_j]= e_i e_j^{t} -e_j e_i^{t} \quad  j>i.
\end{align*}
Moreover we set
\[
H_0:= (0, e_0 ) \in \mathrm{Lie}(\widetilde{G})
\]
which is horizontal for the fibration $\tilde{\pi}$.

{\bf Notation} For $\widetilde{X} \in \mathrm{Lie}(\widetilde{G})$ we denote by $\widetilde{X}^{l}$ the left invariant  vector field in $\widetilde{G}$ associated.

Denote by $K$ the subgroup of $G$ made of the rotations of $\R^d$. We have
\[
K := \left \{  \left ( \begin{matrix} 1 & 0 \\ 0 & k  \end{matrix} \right ), k \in SO(d)  \right \},
\] 
and $K$ is also the stabilizer of $e_0$ under the action of $G$ on $\R^{1,d}$. The homogeneous space $G/K$ can be identified with the orbit of $e_0$ under the action of $G$ which is  the unit pseudo sphere $\Hyp^d := \{ \xi \in \R^{1,d}, \ q(\xi)=1, \xi^0 >0 \}$ and is a Riemannian manifold of constant negative curvature when its tangent space is endowed with the restriction of $q$ on it.

For $r \in \R^{+}$ and $ \theta \in \mathbb{S}^{d-1} \subset \R^{d}$, define 
 \[
 S(r, \theta):= \exp \left ( r \sum_{i=1}^d \theta^i V_i \right ) = \left ( \begin{matrix} \cosh(r) & \sinh(r) \theta^{t}  \\ \sinh(r) \theta & \mathrm{Id} + ( \cosh(r) -1 ) \theta \theta^{t} \end{matrix}  \right ).
 \] 
Each $g \in G$ can be decomposed in \emph{polar form} $g= S(r(g), \theta(g) )R$ where $R\in K$.

\subsection{ Dudley processes}

In this paragraph we define the relativistic processes introduced by Dudley in \cite{Dud66}. These processes enjoy two natural properties:
\begin{itemize}
\item they are $\widetilde{G}$-invariant i.e their dynamics are invariant by a change of $q$-orthonormal frame   
\item  their trajectories in $\R^{1,d}$ are time-like: they are almost everywhere differentiable,and the tangents vectors are time-like and time oriented.
\end{itemize}

First remark that no Markov processes with values in $\R^{1,d}$ is $\tilde{G}$-invariant. Indeed, the law at some time $t>0$  of such process starting at $0$ would be a $G$-invariant probability measure in $\R^{1,d}$ which is necessary trivial by the following lemma.

\begin{lem}
The only $G$-invariant probability measure in $\R^{1,d}$ is the Dirac measure at $0$.
\end{lem}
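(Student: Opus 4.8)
The plan is to study the Fourier transform (characteristic function) of a putative $G$-invariant Borel probability measure $\mu$ and to exploit the transitivity of $G$ on $\Hyp^d$ recalled above. Set $\hat\mu(\eta):=\int_{\R^{1,d}} e^{i\langle \eta,\xi\rangle}\,d\mu(\xi)$ for $\eta\in\R^{1,d}$, where $\langle\cdot,\cdot\rangle$ denotes the standard Euclidean scalar product. The invariance $g_*\mu=\mu$ together with $\langle\eta,g\xi\rangle=\langle g^t\eta,\xi\rangle$ yields the key symmetry $\hat\mu(g^t\eta)=\hat\mu(\eta)$ for every $g\in G$, so that $\hat\mu$ is constant along each orbit of $G^t=\{g^t:g\in G\}$. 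Using $gQg^t=Q$, i.e. $g^t=Qg^{-1}Q$, together with $Qe_0=e_0$, the orbit of the future timelike vector $te_0$ (for $t>0$) is $t\,Q\,\Hyp^d=t\,\Hyp^d$, since $g^{-1}e_0$ ranges over $\Hyp^d$ and $Q$ preserves it. Hence $\hat\mu$ is constant, equal to $\phi(t):=\hat\mu(te_0)$, on the whole hyperboloid $t\,\Hyp^d=\{q(\eta)=t^2,\ \eta^0>0\}$.

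Next I would let these hyperboloids degenerate onto the light cone. As $t\to0^+$ one has $\phi(t)=\hat\mu(te_0)\to\hat\mu(0)=1$ by continuity of $\hat\mu$. Any future null vector $\lambda(1,\vec\theta)$ with $\lambda>0$ and $\vec\theta\in\s$ is a limit of points $t(\cosh r,\sinh r\,\vec\theta)\in t\,\Hyp^d$ obtained by letting $t\to0$ and $r\to+\infty$ with $te^r\to2\lambda$; along such a sequence $\hat\mu$ equals $\phi(t)\to1$. Continuity of $\hat\mu$ then forces $\hat\mu\equiv1$ on the entire future light cone, and using $\hat\mu(-\eta)=\overline{\hat\mu(\eta)}$ also on the past light cone.

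Finally, for each fixed direction $\vec\theta\in\s$ this gives $\hat\mu\big(\lambda(1,\vec\theta)\big)=1$ for all $\lambda\in\R$. But the left-hand side is the characteristic function of the real random variable $\xi\mapsto \xi^0+\langle\vec\theta,\vec\xi\rangle$ under $\mu$, so that variable vanishes $\mu$-almost surely. Intersecting the hyperplanes $\{\xi^0+\langle\vec\theta,\vec\xi\rangle=0\}$ over all $\vec\theta\in\s$ leaves only the origin, whence $\mu=\delta_0$, as claimed.

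The individual steps are short; the one demanding care—and the conceptual heart of the argument—is the passage from ``$\hat\mu$ constant on each hyperboloid'' to ``$\hat\mu\equiv1$ on the light cone''. It is precisely here that the non-compactness of $G$ enters: the boosts act on the null rays by the dilations $\lambda\mapsto e^s\lambda$, so the values of $\hat\mu$ at infinity along these rays are slaved to its value $1$ at the origin. Controlling the limiting procedure well enough to invoke continuity of $\hat\mu$ at the null points is the only delicate point; everything else reduces to the transitivity of $G$ on $\Hyp^d$ and the injectivity of the Fourier transform.
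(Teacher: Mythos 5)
Your argument is correct, but it proceeds by a genuinely different route from the paper. The paper argues directly on the measure: assuming $\mathrm{supp}(\mu)$ is not contained in a hyperplane tangent to the light cone, it picks a compact set $C$ with $\mu(C)>0$ off that hyperplane and uses iterated boosts $S(r_k,\hat\theta)$ to produce infinitely many pairwise disjoint images $g_k(C)$, each of measure $\mu(C)$, contradicting $\mu(\R^{1,d})=1$; a separate rotation argument then kills the degenerate case where the support lies in a null hyperplane. You instead pass to the characteristic function: $G$-invariance of $\mu$ forces $\hat\mu$ to be constant on each mass shell $t\,\Hyp^d$, letting these shells degenerate onto the light cone (via $t\to 0$, $r\to\infty$, $te^r\to 2\lambda$) pins $\hat\mu\equiv 1$ on the whole cone by continuity, and since each null covector gives a real linear functional whose law under $\mu$ then has characteristic function $\equiv 1$, the support of $\mu$ lies in the corresponding null hyperplanes. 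Both proofs exploit the same non-compactness (the boosts), but yours trades the paper's case analysis for the injectivity and continuity of the Fourier transform, and is arguably cleaner; the paper's is more elementary. One small point to tighten: at the last step you intersect the full-measure hyperplanes $\{\xi^0+\langle\vec\theta,\vec\xi\rangle=0\}$ over \emph{all} $\vec\theta\in\s$, which is an uncountable intersection of almost-sure events; it suffices (and is needed for the measure-theoretic conclusion) to take $d+1$ directions $\vec\theta_i$ such that the vectors $(1,\vec\theta_i)$ span $\R^{d+1}$, e.g.\ $\pm e_1, e_2,\dots,e_d$, whose common kernel is already $\{0\}$.
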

\begin{proof}
Let $\mu$ be a $G$-invariant probability measure in $\R^{1,d}$. First suppose that the support of $\mu$ is not contained in the $q$-orthogonal hyperplane of some light-like line $\{ u (e_0  + \hat{\theta}), u \in \R \}$ ( $\hat{\theta}:= \sum_{i=1}^d \hat{\theta}^i e_i \in \mathbb{S}^{d-1} $ ). So there exist a compact set $C$ in the complement of this hyperplane such that $\mu( C) >0$. For $g= S(r, \hat{\theta})$, $r>0$ and $\xi= (\xi^0, \vec{\xi} ) \in \R^{1,d}$, denoting $\Vert \cdot \Vert$ the Euclidean norm in $\R^{1,d}$ we have 
\begin{align*}
 \Vert g(\xi) \Vert^2&= 2q( g(\xi), e_0)^2  -q(g(\xi) )= 2 q(\xi, g^{-1}(e_0) )^2 -q(\xi)
 =  2 \left ( \cosh(r) \xi^0 - \sinh(r)   \hat{\theta} \cdot \vec{\xi} \right )^2 - q(\xi) \\
 &= 2 \left (  \cosh(r) ( \xi^0 - \hat{\theta} \cdot \vec{\xi} )  + e^{-r} \hat{\theta} \cdot \vec{\xi} \right )^2 -q(\xi) \\
 &= 2 \left (  \cosh(r) q( \xi, e_0 + \hat{\theta} ) + e^{-r} \hat{\theta} \cdot \vec{\xi}   \right )^2 -q(\xi).
\end{align*}

Since $C$ is a compact set in the complement of the  $q$-orthogonal hyperplan to $\{ u (e_0  + \hat{\theta}), u \in \R \}$ then $\inf_ {\xi \in C} \vert q( \xi, e_0 + \hat{\theta} )  \vert > 0$ and thus $r$ can be chosen such that $\inf_{\xi \in C } \Vert g(\xi) \Vert $ is arbitrary large. Thus $g$ can be chosen such that $C$ and $g(C)$ are disjoint. Furthermore  $q( g(\xi), e_0 + \hat{\theta} )= q (\xi , g^{-1}(e_0 + \hat{\theta} ) )= e^r q(\xi, e_0 + \hat{\theta})$ and thus $g(C)$ belongs to the  complement  of the hyperplan $q$-orthogonal to $e_0 + \hat{\theta}$. By iteration we can find a sequence $g_k = S(r_k, \hat{\theta})$, $k \in \N$ such that the compact sets $g_k (C)$ are pairwise disjoint. Thus we obtain a contradiction writing $1 \geq \mu ( \cup_k g_k(C)) = \sum_k \mu ( g_k (C) ) = \sum_k  \mu(C)  = + \infty$.

Now if the support of $\mu$ is contained in some hyperplan tangent to the light-cone and is not restricted to $0$, we can find a compact set $C$ in this hyperplan with $0 \notin C$ and $\mu(C) >0$ and we can choose a rotation $R \in K$ such that $R(C)$ is not in the hyperplan. Thus $\mu(C)= \mu(R(C) )= 0$ and it gives a contradiction. So we proved that $\mu$ is necessary the Dirac measure in $\{0 \}$. 
\end{proof}

Thus $\R^{1,d}= \widetilde{G}/G$ cannot be the space of states of some non-trivial $\widetilde{G}$-invariant Markov process. But $\widetilde{G}$-homogeneous spaces of the form $\widetilde{G}/ \widetilde{K}$ where $\widetilde{K}$ is a compact subgroup of $\widetilde{G}$ have some $\widetilde{K}$-invariant probability measure and may be endowed with some $\widetilde{G}$-invariant Markov processes (see \cite{Liao04} or \cite{Hunt56} ). The smallest spaces of states we can consider correspond to the maximal compact sub-group of $\widetilde{G}$.  Thus it is natural to consider the space of states $\widetilde{G}/K  \simeq \Hyp^d \times \R^{1,d}$. The group $K$ is seen as the subgroup of $\widetilde{G}$ which stabilize $0$ and $e_0$ under the action of $\widetilde{G}$ on $\R^{1,,d}$.



We denote by $\pi : \widetilde{G} \longmapsto \Hyp^d \times \R^{1,d} \simeq \widetilde{G}/K$ the canonical projection  \[\forall \tilde{g}= (g, \xi) \in \widetilde{G} \quad \pi(\tilde{g})=(g(e_0), \xi ). \]

The following Proposition exhibit all the relativistic processes in $\Hyp^d \times \R^{1,d}$. It is essentially an application of a result of Liao ( Theorem 2.1 and 2.2 p 42 in \cite{Liao04} ).

\begin{prop}\label{defiDudley}
The Markov processes on $\Hyp^d \times \R^{1,d}$, starting at $(\zeta_0, \xi_0)$, which are $\widetilde{G}$-invariant and whose trajectories are time-like are of the form $(\zeta_s, \xi_s)$ where $ \zeta_s$ is a $G$-invariant Markov process on $\Hyp^{d}$ and $\xi_t = \xi_0 + a\int_{0}^{t} \zeta_s  ds $; $a$ being some positive constant.  For  such a process there exist $\sigma >0$ and a measure $\nu$  on $\R^+$ satisfying 
\[
\int_0^{+ \infty} \min(1,r^2) \nu(dr) < + \infty,
\]
such that $(\zeta_t, \xi_t) = \pi (\tilde{g}_t )$ in law where $\tilde{g}_t$ is a left Levy process on $\widetilde{G}$ starting at $\tilde{g}_0$ s.t $(\zeta_0, \xi_0) = \pi (\tilde{g}_0 )$ of which  generator $\tilde{\Op}$  is defined  by
\begin{align*}
\forall f \in C^2( \widetilde{G} ) \quad \widetilde{\Op} f (\tilde{g}) = a H_0^{l}f(\tilde{g}) &+ \frac{\sigma^2}{2} \sum_{i=1}^{d} (V_{i}^l)^2 f (\tilde{g})  \\
 &+ \int_0^{+\infty } \int_{\mathbb{S}^d} \left ( f(\tilde{g}S(r, \theta) ) -f(\tilde{g}) - r \mathbf{1}_{r \in [0,1] }\sum_{i=1}^d \theta^{i} V_i^l f(\tilde{g}) \right ) \nu(dr) d \theta.
\end{align*} 
\end{prop}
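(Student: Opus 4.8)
The plan is to read this proposition as a classification statement and to reduce it, via the cited work of Liao, to a purely representation-theoretic computation followed by the imposition of the time-like constraint. First I would invoke Liao's Theorems 2.1--2.2: a $\widetilde{G}$-invariant Markov (Feller) process on the homogeneous space $\widetilde{G}/K \simeq \Hyp^d \times \R^{1,d}$, with $K$ compact, is the $\pi$-projection in law of a left L\'evy process $\tilde{g}_t$ on $\widetilde{G}$ whose law is invariant under the right action of $K$; equivalently its generator is of Hunt type and right-$K$-invariant. Concretely, relative to the basis $\{V_i, V_{ij}, H_0, (0,e_i)\}$ of $\mathrm{Lie}(\widetilde{G})$, the generator is determined on functions lifted from the quotient by three pieces of $\mathrm{Ad}(K)$-invariant data: a drift $\widetilde{X}_0 \in \mathrm{Lie}(\widetilde{G})$ (modulo $\mathrm{Lie}(K)$), a non-negative symmetric bilinear form $A$ on $\mathrm{Lie}(\widetilde{G})$ (the diffusion covariance), and a L\'evy measure $\Pi$ on $\widetilde{G}$. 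The whole task is then to decide which invariant triples $(\widetilde{X}_0, A, \Pi)$ give a time-like process and to recognise the resulting generator as the one in the statement. I would stress that only the general Lie-group Hunt-type classification is needed here, which holds for an arbitrary Lie group, so the non-semisimplicity of $\widetilde{G}$ noted in the introduction is not an obstruction at this stage.

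Next I would decompose $\mathrm{Lie}(\widetilde{G}) = \mathrm{Lie}(G) \times \R^{1,d}$ under $\mathrm{Ad}(K) = SO(d)$: the $V_i$ span a copy of the standard vector representation, the $V_{ij}$ the adjoint representation of $SO(d)$, the spatial translations $(0,e_i)$ a second copy of the vector representation, and $H_0 = (0,e_0)$ the trivial representation. By Schur's lemma the only $\mathrm{Ad}(K)$-invariant vectors are the multiples of $H_0$, so $\widetilde{X}_0 = a H_0$; and any invariant non-negative form must be block-diagonal along isotypic components, hence of the shape $\alpha_1 \sum_i V_i^2 + \alpha_2\,(0,e_0)^2 + \alpha_3 \sum_i (0,e_i)^2 + 2\alpha_4 \sum_i V_i\,(0,e_i) + \beta \sum_{i<j} V_{ij}^2$, the single cross term $\sum_i V_i\,(0,e_i)$ surviving because the two vector copies are isomorphic. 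Finally $\Pi$, being right-$K$-invariant, descends to a rotationally symmetric measure on the relevant directions.

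I would then bring in the time-like constraint, which is what selects the physically relevant triples. A direct computation of left-invariant fields shows $H_0^l$ at $(g,\xi)$ equals $(0,g(e_0))$, whereas $V_i^l$ and every jump $\tilde{g}\mapsto \tilde{g}\,S(r,\theta)$, being generated by elements of $G$ fixing $0$, leave the translation component $\xi$ untouched. Hence any non-zero diffusion or jump in the translation directions $(0,e_i),(0,e_0)$ would endow $\xi_t$ with a genuine martingale or jump part and destroy the a.e.\ differentiability, contradicting the time-like hypothesis: this forces $\alpha_2=\alpha_3=\alpha_4=0$ and confines $\Pi$ to $G$, and by the residual $\mathrm{Ad}(K)$-symmetry to the pure boosts, giving $\Pi = \nu(dr)\,d\theta$ with the stated integrability $\int_0^\infty \min(1,r^2)\,\nu(dr) < +\infty$. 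The term $\beta \sum_{i<j} V_{ij}^2$ and any $\mathrm{Lie}(K)$-drift are vertical for $\pi$, hence invisible on the quotient and set to zero in the representative generator, while time-orientation forces $a>0$ and $\sigma^2 := 2\alpha_1 > 0$. Reading off the drift $H_0^l = (0,\zeta)$ then yields $d\xi_t = a\,\zeta_t\,dt$, i.e.\ $\xi_t = \xi_0 + a\int_0^t \zeta_s\,ds$, and projecting the linear part shows $\zeta_t = g_t(e_0)$ is a $G$-invariant Markov process on $\Hyp^d$, which is exactly the asserted structure.

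The main obstacle I expect is the rigorous passage, in the third step, from the geometric time-like hypothesis to the vanishing of the translation-direction data. One must argue that a non-degenerate Gaussian part along $(0,e_i)$, or an infinite-activity jump component there, necessarily produces a non-differentiable (hence non-time-like) trajectory with positive probability, and separately that the cross coupling $\alpha_4\sum_i V_i\,(0,e_i)$ cannot smuggle randomness into $\xi$ through the correlation with the $V_i$-diffusion; handling the small-jump compensator carefully is the delicate point. A secondary technical check is that Liao's lifting applies verbatim despite $\widetilde{G}$ being neither compact nor semisimple, for which one only needs the general Hunt-L\'evy classification together with compactness of $K$.
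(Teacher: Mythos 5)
Your proposal follows essentially the same route as the paper: reduce via Liao's Theorems 2.1--2.2 to a right-$K$-invariant Hunt generator on $\widetilde{G}$, use $\mathrm{Ad}(K)$-invariance (Schur) to constrain the drift, covariance and L\'evy measure to the stated form, then kill the translation-direction Gaussian and jump parts by the time-like/differentiability requirement, with time-orientation giving $a>0$. The one ``delicate point'' you flag --- ruling out the cross term $\alpha_4\sum_i V_i\,(0,e_i)$ --- is closed immediately by positive semidefiniteness of the covariance matrix ($\alpha_1\alpha_3\ge\alpha_4^2$, so $\alpha_3=0$ forces $\alpha_4=0$), which is exactly how the paper's condition $\tilde{\sigma}\sigma\ge(\sigma')^2$ disposes of it.
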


\begin{defi}[Dudley processes]
When $a=1$ then $\dot{\xi}_t = \zeta_t$ and $\xi_t$ is parametrized by its \emph{proper time}, i.e
$
q( \dot{\xi}_t )=1.
$
When moreover $\sigma$ or $\nu$ is non trival we call $(\dot{\xi}_t , \xi_t)$ a Dudley process and we consider exclusively these processes in the sequel. When $\nu =0$ $(\dot{\xi}_t, \xi_t)$ is continuous and is called \emph{Dudley diffusion}.
\end{defi}
\begin{rmq}
The process $\xi_t$ is differentiable and $\dot{\xi}_t$ is c\`adl\`ag.
\end{rmq}

\begin{proof}[Proof of Proposition \ref{defiDudley}]
Let $(\zeta_t, \xi_t)$ a Markov (Feller) process on $\Hyp^{d} \times \R^{1,d}$ starting at $(\zeta_0, \xi_0)$, which is $\widetilde{G}$-invariant and whose trajectories are time-like. By choosing $\tilde{g}_0$ such that $(\zeta_0, \xi_0) = \pi (\tilde{g}_0 )$ and considering the Markov process $\tilde{g}_0^{-1} (\zeta_t, \xi_t)$ it remains to prove the proposition in the case where $(\zeta_0, \xi_0)= (e_0, 0)$. 

Set \[ H_i := ( 0, e_i ) \in \mathrm{Lie}(\widetilde{G}), \quad i=1, \dots, d.\] The family $\{ H_0, H_i, V_i,  V_{ij} \}_{i<j \in \{1, \dots, d \}}$ form an orthonormal basis of $\mathrm{Lie}(\widetilde{G})$ for an $Ad(K)$-invariant inner product on $\mathrm{Lie}( \widetilde{G} )$ and $\mathrm{Ker} \ d_{\mathrm{Id}} \pi  = \{  V_{ij} \}_{i<j \in \{1, \dots, d\} }$. 

We set \[ X_0 := H_0 \quad X_i := H_i \quad X_{d+i}:= V_i \quad i=1, \dots, d. \] An $\tilde{h}=(h, \xi) \in \tilde{G}$ can be decomposed in $ \tilde{h}= \exp \left ( \sum_{i=0}^d x^{i}(\tilde{h}) X_i \right ) \exp \left ( \sum_{i= d+1}^{2d} x^{i}(\tilde{h}) X_i \right )R$ where $R \in K$, $x^{0}(\tilde{h})= \xi^{0}$ and  for $i=1, \dots, d$  $x^{i}(\tilde{h}) = \xi^{i}$ and $x^{d+i}(\tilde{h})= r(h)\theta^{i}(h)$.  By Theorem 2.1 and 2.2 p 42 of \cite{Liao04}, $(\zeta_t, \xi_t)$ coincide in law with $\pi(\tilde{g}_t)$ where $\tilde{g}_t$ is a left Levy process in $\tilde{G}$, starting at $\mathrm{Id}$, which is $K$-right invariant and generated by
\[
\widetilde{\Op} f(\tilde{g}) = \frac{1}{2}\sum_{i, j =0}^{2d } a_{ij} X_i^{l} X_j^{l}f(\tilde{g}) + \sum_{i=0}^{2d} b^{i} X_i^{l}f(\tilde{g}) + \int_{\widetilde{G}} \left ( f(\tilde{g} \tilde{h}) -f(\tilde{g}) - \sum_{i=0}^{2d} \mathbf{1}_{\underset{\xi^{i}(h) \leq 1}{r(h) \leq 1 }  }x^{i}(\tilde{h}) X_{i}f(\tilde{g})   \right ) \widetilde{\Pi}(d \tilde{h}).
\]
The matrice $A:=(a_{ij})$ is a positive symmetric , $(b^{i})_i \in \R^{2d+1}$ and $\widetilde{\Pi}$ is a Levy measure invariant  by $K$-conjugation in $\widetilde{G}$.  The right $K$-invariance of $\tilde{g}_t$ ensures that 
for all $k \in K \simeq SO(d)$ , $\mathrm{diag}(1,k, k) A \mathrm{diag}(1,k, k)^{-1} = A$.  Thus, $A$ is necessarily of the form
\[
\left (
\begin{matrix}
\hat{\sigma}  & 0 & 0 \\
0 & \tilde{\sigma} \mathrm{Id}&  \sigma' \mathrm{Id} \\
0 & \sigma' \mathrm{Id}& \sigma \mathrm{Id}
\end{matrix} \right ),
\]
where $\hat{\sigma}, \tilde{\sigma} \geq 0$ and $\tilde{\sigma} \sigma \geq (\sigma ')^2$. Moreover, using again $K$-invariance  it comes $b^i =0$ for $i=1, \dots, 2d$ and we set $a:= b^{0}$.  The trajectories of $\tilde{g}_t$ projected in $\R^{1,d}$ need to be differentiable so the jump measure $\Pi$ is supported on $G$ and $\hat{\sigma}$ and $\tilde{\sigma}$ are necessarily null. Since the trajectories are time-oriented we have $a >0$. The push forward of $\widetilde{\Pi}$ by $\pi$ is supported on $G/K \simeq \Hyp^{d}$ and is $K$-invariant. Thus $\Pi$ can be chosen of the form
\[
\forall f \in C_0(G) \quad \Pi f =  \int_0^{+\infty} \int_{\mathbb{S}^{d-1}}  f(S(r, \theta)) \nu(dr)  d\theta
\] 
where $\nu$ is a Levy measure on $\R^{*}_{+}$ ( i.e satisfying $\int \min (1,r^2) \nu(dr) <+\infty$). 
\end{proof}

 Denote by $g_t$ the $G$-component of $\tilde{g}_t$. Thus $g_t (e_0) = \dot{\xi}_t$ and $ \xi_t = \int_0^t g_s (e_0) ds $. By definition $g_t$ is a $G$-valued left Levy process, $K$-right invariant, generated by $\Op$ defined by
 \[
 \forall f \in C^2(G) \quad \Op f(g) = \frac{\sigma^2}{2} \sum_{i=1}^{d} (V_{i}^l)^2 f (g) + \int_0^{+\infty } \int_{\mathbb{S}^d} \left ( f(\tilde{g}S(r, \theta) ) -f(\tilde{g}) - r \mathbf{1}_{r \in [0,1] }\sum_{i=1}^d \theta^{i} V_i^l f(\tilde{g}) \right ) \nu(dr) d \theta.
 \]
Denote by $\Pi$ the Levy measure supported on $G$ defined by 
 \[
 \Pi f = \int_0^{+\infty} \int_{\mathbb{S}^{d-1}} f(S(r, \theta) ) \nu(dr) d\theta.
 \]
 
 Define $U_0 := \{ g \in G, r(g) \leq 1\}$ which is a $K$ invariant neighborhood of $\mathrm{Id}$ in $G$.  For $f \in C^2(G)$ we have the following It\^o formula (see \cite{App/Kun}) for $g_t$
\begin{align} \label{Ito}  \notag
f(g_t)& = f(\mathrm{Id}) + \sigma  \sum_{i=1}^d\int_0^t V_i^{l}f(g_{s^{-}}) dB^{i}_s + \frac{\sigma^2}{2}  \int_0^t  \sum_{i=1}^{d} (V_i^{l})^2f(g_{s^{-}}) ds +\int_0^t \int_{U_0}  \left ( f(g_{s^{-}}h) -f(g_{s^{-}}) \right ) \tilde{N}(ds,dh) \\ 
&+ \int_0^t  \int_{U_0} \left ( f(g_{s^{-}}h) -f(g_{s^{-}}) -r(h) \sum_{i=1}^{d}  \theta^{i}(h) V_i^l f(g_{s^{-}}) \right )  ds \Pi(dh) \\ 
& + \int_0^t \int_{(U_0)^c} \left ( f(g_{s^{-}}h) -f(g_{s^{-}}) \right ) N(ds,dh), \notag
\end{align}
where $B_t$ is a Brownian motion of  $\R^d$, $N$ is a  Poisson random measure on $\R \times G$ of intensity measure $dt \otimes \Pi$ and $\tilde{N}(ds, dh) := N(ds, dh) - ds \Pi(dh) $ is the compensated random measure associated. 
\section{Asymptotic random variables}\label{asympt}
In this section we determine the asymptotic behavior of $\pi(\tilde{g}_t)=(g_{t} (e_0), \xi_t)$  under an integrability condition on the jump intensity measure $\nu$ (Ass.\ref{integrability}). Writing $g_t=n_t a_t k_t$ in some Iwasawa decomposition of $G$ we first prove (Prop.\ref{alpha}), applying the It\^o formula \eqref{Ito} and the law of large number, that the abelian term $a_t = \exp( \alpha_t V_1) $ is \emph{positively contracting},  $\frac{\alpha_t}{t}$ converges almost surely to a positive constant $\alpha$ depending explicitly on $\sigma$ and $\nu$. Next  we prove (Prop.\ref{convht}) that the nilpotent term $n_t$ converges  almost surely to an asymptotic random variable $n_\infty$ and this convergence is exponentially fast with rate   $\alpha$. Then we investigate (Prop.\ref{xit}) the asymptotic behavior of $\xi_t$ in $\R^{1,d}$. Geometrically, seen in the projective space, the $\Hyp^d$-valued process $g_t(e_0)$ converges to a limit angle $\theta_\infty \in \partial \Hyp^d \simeq \mathbb{S}^{d-1}$  of which $n_\infty$ is a stereographic projection. Moreover, the process $\xi_t$ is asymptotic to some affine hyperplane $q$-orthogonal to $\theta_\infty$ of which position is fixed by another asymptotic random variable $\lambda_\infty \in \R^{*}_{+}$. Figure \ref{fig} sum up the asymptotic results.

\begin{figure}[h!]
 \centering
\input{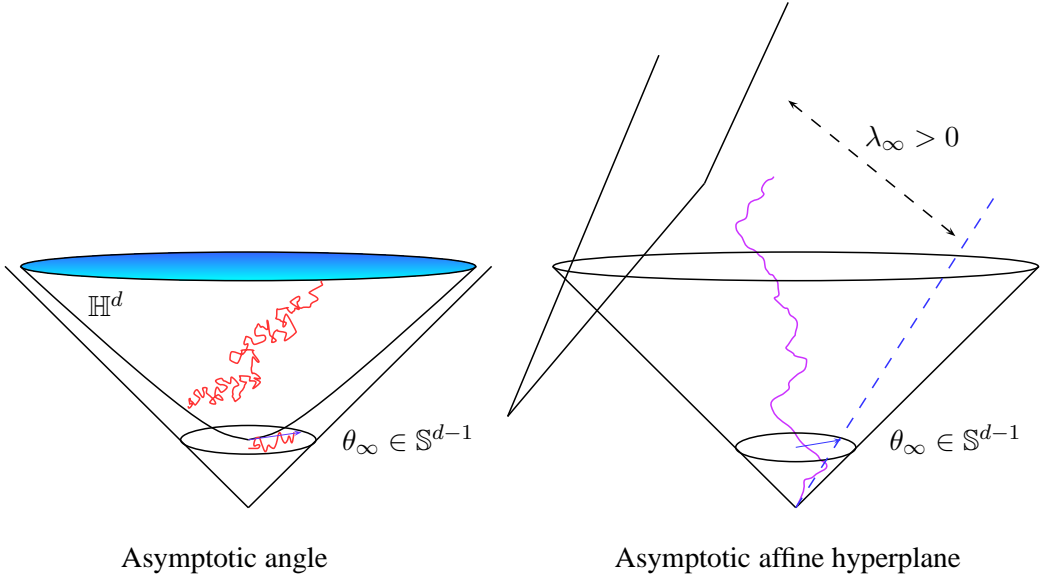}
\caption{Asymptotic behavior of a Dudley diffusion } \label{fig}
\end{figure}

\subsection{Iwasawa decomposition in $G$}

Although a polar decomposition of $G$ was used to introduce $g_t$ (defining the $K$ invariant measure $\Pi$),  Iwasawa decomposition seems to be more adapted to describe its asymptotic dynamics. Introduce briefly this decomposition.

The maximal abelian subalgebra contained in $\mathrm{Vect}\{ V_1, \dots, V_d \}$, which is the orthogonal subspace of $\mathrm{Lie}(G)$ of $\mathcal{K}$ for the Killing form,  is  of dimension one. Let choose $\mathcal{A} := \mathrm{Vect} \{ V_1 \}$ one of them. The linear endomorphism $\mathrm{ad}(V_1)$ of $\mathrm{Lie}(G)$  is diagonalisable with eigenvalues $-1, 0$ and $1$. Set
 \[
 \mathcal{\mathcal{N}}= \left \{ X \in \mathrm{Lie}(G), \  \mathrm{ad}(V_1) X= -X  \right \} \quad   \widebar{\mathcal{N}}=\left \{ X \in \mathrm{Lie}(G), \  \mathrm{ad}(V_1) X= X  \right \} 
  \]
  the eigenspace corresponding respectively to the eigenvalue $-1$ and $1$. Explicitly
\[
\mathcal{N}=\mathrm{Vect}\{ V_i - V_{1i}  , \ \  i=2, \dots, d \} \quad \mathrm{and} \ \ \widebar{\mathcal{N}}= \mathrm{Vect}\{ V_i + V_{1i}  , \ \  i=2, \dots, d \}.
\]
The eigenspace corresponding to $0$ is $\mathcal{A}\oplus \mathcal{M}$ where $\mathcal{M}$ is the sub algebra of elements of $\mathcal{K}$ which commute with the elements of $\mathcal{A}$. Explicitly
\[ 
\mathcal{M}= \mathrm{Vect}\{ V_{ij} ,\ \  i, j =2\dots d  \} =\left \{ \left ( \begin{matrix} 0 & 0 & 0 \\ 0 & 0 & 0 \\ 0 & 0 &  C    \end{matrix}  \right ), \ \  C \in \mathrm{so}(d-1)    \right \}. 
\]
The subspace $\mathcal{N}$ is a nilpotent Lie algebra (even abelian since $[ \mathcal{N}, \mathcal{N}]=0$). The corresponding Iwasawa decomposition of $\mathrm{Lie}(G)$ is 
\[
\mathrm{Lie}(G)= \mathcal{N}\oplus \mathcal{A} \oplus \mathcal{K}.
\]
For $X \in \mathrm{Lie}(G)$ we denote  by $\{ X \}_{\mathcal{N}}$ (resp. $\{ X \}_{\mathcal{A}}$ and $\{ X \}_{\mathcal{K}}$) its projection in $\mathcal{N}$ (resp. $\mathcal{A}$ and $\mathcal{K}$ ) thus $X= \{ X \}_{\mathcal{N}}+\{ X \}_{\mathcal{A}}+\{ X \}_{\mathcal{K}}$. 
Denoting by $A:=\exp ( \mathcal{A})$, $N:= \exp(\mathcal{N})$ the subgroup corresponding we obtain the corresponding Iwasawa decompositions of $G$
\[
G=NAK.
\]
Moreover, the mapping from $N \times A \times K$ to $G$ which maps $(n,a,k)$ to $nak$ is an analytic diffeomorphism. For $g\in G$ we denote by $g=(g)_{N} (g)_A (g)_K$ its decomposition in Iwasawa coordinates. To simplify  notations  set $n_t:=(g_t)_{N}$, $a_t:=(g_t)_A$ and $k_t=(g_t)_K$, thus $g_t=n_t a_t k_t$.

Note that we have other Iwasawa decompositions like $\mathrm{Lie}(G) = \widebar{N}\oplus \mathcal{A} \oplus \mathcal{K}$ (with $G= \widebar{N}AK$). 

\noindent\textbf{Iwasawa and polar coordinates}. \\
\noindent For $g \in G$ written in polar form $g = \exp \left (r \sum_{i=1}^{d} \theta^{i} \left ( V_i -V_{1i}\right ) \right )R$ where $R\in K $, we have $ q( e_0 , g(e_0) )= \cosh(r)$.

 Now denoting by $b \in \R^{d-2}$ and $u \in \R$ such that $(g)_N = \exp\left ( \sum_{i=2}^{d} b^i (V_i -V_{1i}  ) \right )$ and $(g)_A= \exp \left ( u V_1\right )$ we can compute explicitly $q( e_0 , g(e_0) )$ in terms of $b$ and $u$ and we obtain 
\begin{align}
\cosh(r) = \left ( 1+ \frac{\Vert b \Vert^2}{2} \right ) \cosh (u )  + \frac{\Vert b \Vert^2}{2} \sinh( u  ). \label{link}
\end{align}
Moreover $u$ can be expressed in term of $\theta$ and $r$ via 
\begin{align}
e^{u} =\cosh(r) + \theta^1 \sinh(r) \label{link2}
\end{align}
\\

\subsection{Asymptotic behavior of the $G$-component}

The aim of this section is to show that $n_t$ converges almost surely to an asymptotic $N$-valued random variable $n_{\infty}$ and that the convergence is exponentially fast. This result, stated in Proposition \ref{convht}, appears to be a consequence of the contracting property of $a_t$. For this we need the following  integrability condition on $\nu$. The group $G$ is semi simple and the tools used in this section are very closed from those of Liao \cite{Liao04}.  Nevertheless we present a self-contained proof in our specific framework and our results are established under a weaker assumption than the ones of \cite{Liao04} ( see remark  \ref{rmq} ). Namely we suppose that the following integrability condition is satisfied. 

\begin{ass}\label{integrability} 

\[
\int_1^{+\infty} r \nu (dr) < +\infty.
\]
\end{ass}

The following proposition computes explicitly the linear drift of $a_t$ which appears to be positive. The proof is essentially a consequence of the law of large number. 

\begin{prop}\label{alpha}
Let denote by $\alpha_t$ the $\R$-valued process such that $a_t= \exp ( \alpha_t V_1)$. Then the following convergence holds almost surely
\[
\frac{\alpha_t}{t} \underset{t \to \infty}{\longrightarrow} \alpha >0.
\]
The positive constant $\alpha$ is 
\[
\alpha := \frac{d-1}{2} \sigma^2 + \int_0^{+\infty} \frac{r \cosh(r) -\sinh(r)}{\sinh(r)} \nu(dr).
\]
\end{prop}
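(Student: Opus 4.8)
The plan is to realise $\alpha_t$ as the value at $g_t$ of the Iwasawa $\mathcal{A}$-coordinate and to apply the Itô formula \eqref{Ito}, the decisive point being that the resulting drift is a \emph{constant}, so that the statement reduces to a law of large numbers for the martingale part. Concretely, let $H\colon G\to\R$ be the Busemann-type function defined by $g=(g)_N\exp\big(H(g)V_1\big)(g)_K$, so that $\alpha_t=H(g_t)$. The map $H$ is analytic, since Iwasawa coordinates form an analytic diffeomorphism, and it satisfies the cocycle identity $H(gh)=H(g)+H\big((g)_K\,h\big)$, because multiplying $(g)_N(g)_A$ on the left only alters the $N$-component of $gh$. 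In particular every increment of $H$ depends on $g$ only through its compact part $(g)_K$, which is the structural fact driving the whole computation.

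First I would compute $\Op H$ and check that it is independent of $g$. Writing $(g)_K=\mathrm{diag}(1,R)$ with $R\in SO(d)$, the relations $(g)_K\,S(r,\theta)=S(r,R\theta)\,(g)_K$ and $(g)_K\exp(sV_i)=\exp\big(s\,\mathrm{Ad}((g)_K)V_i\big)(g)_K$, combined with the right $K$-invariance of $H$, give $H\big((g)_K S(r,\theta)\big)=H\big(S(r,R\theta)\big)$, while $V_i^lH(g)=\langle Re_i,e_1\rangle$ and $(V_i^l)^2H(g)=1-\langle Re_i,e_1\rangle^2$. By Parseval $\sum_{i=1}^d(V_i^l)^2H(g)=d-\Vert R^te_1\Vert^2=d-1$, so the diffusion part of $\Op H$ is the constant $\tfrac{d-1}{2}\sigma^2$. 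For the jump part, the rotation invariance of $d\theta$ allows the substitution $\theta\mapsto R^{-1}\theta$, erasing all dependence on $R$; the compensator term disappears because $\int_{\s}\theta^1\,d\theta=0$, and \eqref{link2} yields $H(S(r,\theta))=\log(\cosh r+\theta^1\sinh r)$, so that integrating out $\theta^1$ gives $\int_{\s}H(S(r,\theta))\,d\theta=\frac{r\cosh r-\sinh r}{\sinh r}$. Summing, $\Op H\equiv\alpha$ with $\alpha$ as in the statement; positivity is immediate since $r\cosh r-\sinh r$ vanishes at $0$ with positive derivative $r\sinh r$, hence is positive for $r>0$.

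With $\Op H\equiv\alpha$ constant, the Itô formula \eqref{Ito}, after compensating also the large jumps (legitimate because $\big|H((g)_Kh)\big|\lesssim r(h)$ and $\int_1^\infty r\,\nu(dr)<\infty$ by Assumption \ref{integrability}), furnishes the decomposition $\alpha_t=\alpha_0+\alpha t+M_t$ with $M_t$ a local martingale, so it remains to prove $M_t/t\to0$ almost surely. The continuous part and the small-jump ($r\le1$) compensated part are $L^2$-martingales with brackets growing at most linearly, since $V_i^lH$ is bounded on the compact group $K$ and $\int_0^1\int_{\s}H(S(r,\theta))^2\,d\theta\,\nu(dr)<\infty$; the martingale strong law then makes them $o(t)$.

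The large-jump term is the main obstacle, precisely because Assumption \ref{integrability} controls only the first moment of the jumps, not the second. Here I would exploit that large jumps arrive at the finite rate $\Pi(\{r>1\})=|\s|\int_1^\infty\nu(dr)<\infty$, that the marks $H\big((g_{s^-})_K S(r,\theta)\big)$ are dominated in absolute value by integrable i.i.d.\ variables (again by Assumption \ref{integrability}), and that their conditional means reproduce exactly the constant $\alpha$-contribution. A truncation argument for the resulting marked point process then delivers a law of large numbers and makes this term $o(t)$ as well, completing the proof.
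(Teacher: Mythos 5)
Your proposal is correct and follows essentially the same route as the paper: apply the It\^o formula to the Iwasawa $\mathcal{A}$-coordinate, use the cocycle identity to see that every increment depends on $g$ only through $(g)_K$, compute the constant drift $\frac{d-1}{2}\sigma^2+\int_0^{+\infty}\frac{r\cosh r-\sinh r}{\sinh r}\,\nu(dr)$ via $\mathrm{Ad}(K)$-invariance, and kill the continuous and small-jump martingale parts by their linearly growing brackets. The only cosmetic difference is the large-jump term: you compensate it and invoke a truncation argument, whereas the paper rewrites it directly as $\sum_{n\le N_t}\log(h'_n)_A$ with $h'_n:=\mathrm{Ad}(k_{T_n^-})h_n$ i.i.d.\ (by $K$-conjugation invariance of $\Pi$) and integrable under Assumption~\ref{integrability}, then applies the classical strong law of large numbers --- both arguments rest on exactly the same facts.
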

\begin{proof}
First define $\log : A \to \mathcal{A} \simeq \R \ \exp(u V_1) \mapsto u V_1 $ and  apply  It\^o formula \eqref{Ito} to the smooth map $f: g \mapsto \log (g)_A$. Remark that for $g, h \in G$ $(gh)_A = (g)_A \left ( (g)_K h \right )_A$ and 
\[
f(g_{s-}h)-f(g_{s-})=\log \left ( k_{s-}g \right )_{A}.
\]
Moreover
\begin{align*}
V_{i}^{l}f(g_{s-})&=  \left. \frac{d}{dt} \log (g_{s-} e^{tV_{i}})_{A} \right \vert_{t=0} = \left. \frac{d}{dt}  \log \left (e^{tAd(k_{s-})V_{i}} \right )_{A}  \right \vert_{t=0} = \{ Ad(k_{s-})V_{i} \}_{\mathcal{A}} \\
(V_{i}^{l})^{2}f(g_{s-})&= \left. \frac{d}{dt}  \{ Ad\left (g_{s-}e^{tV_{i}} \right )_{K}V_{i} \}_{\mathcal{A}} \right \vert_{t=0}= \left. \frac{d}{dt}  \{ Ad\left (e^{tAd(k_{s-})V_{i}} \right )_{K}Ad(k_{s-})V_{i} \}_{\mathcal{A}} \right \vert_{t=0} \\&= \left \{\left [ \left\{ Ad(k_{s-})V_{i} \right\}_{\mathcal{K}}, Ad(k_{s-})V_{i} \right ] \right \}_{\mathcal{A}}.
\end{align*}
For $k\in K$  we have  $Ad(k)V_{i}= \sum_{j=1}^{d} k_{ij}V_{j}$ where $k=(k_{ij}) \in K \simeq SO(d)$. Then we compute
\begin{align*}
\sum_{i=1}^{d} \left \{\left [ \left\{ Ad(k_{s-})V_{i} \right\}_{\mathcal{K}}, Ad(k_{s-})V_{i} \right ] \right \}_{\mathcal{A}} &=\sum_{i=1}^{d} \sum_{j=1}^{d} \sum_{l=1}^{d} (k_{s-})_{ij}(k_{s-})_{il} \left \{ \left [ \{  V_{j} \}_{\mathcal{K}}, V_{l}  \right ] \right \}_{\mathcal{A}},
\end{align*}
 since $\{V_{j} \}_{\mathcal{K}}= V_{1j}$ (and 0 for $j=1$ ) and  $[V_{1j},V_{l}]=V_{1}\delta_{jl}$ we get
 \begin{align*}
\sum_{i=1}^{d} \left \{\left [ \left\{ Ad(k_{s-})V_{i} \right\}_{\mathcal{K}}, Ad(k_{s-})V_{i} \right ] \right \}_{\mathcal{A}} = \sum_{j=2}^{d} \sum_{i=1}^{d} ((k_{s-})_{ij})^{2} V_{1}= (d-1)V_{1}. 
\end{align*}
Thus It\^o formula \eqref{Ito} can be written 
\begin{align*}
\log(a_{t})&= M_{t} + \frac{d-1}{2}\sigma^{2}t + \int_{0}^{t} \int_{U_{0}^{c}} \log (k_{s-}h)_{A} N(ds,dh) \\& + \int_{0}^{t} \int_{U_{0}} \left ( \log(k_{s-}h)_{A} -r(h)\sum_{i=1}^{d} \theta^{i}(h) \{ Ad(k_{s-})V_{i} \}_{\mathcal{A}} \right ) ds \Pi(dh),
\end{align*}
where $M_{t}:= \sigma \sum_{i=1}^{d}\int_{0}^{t} \{ Ad(k_{s-})V_{i} \}_{\mathcal{A}}dB^{i}_{s} + \int_{0}^{t}\int_{U_{0}} (k_{s-}h)_{A} \tilde{N}(ds,dh) $ is a martingale. Its bracket is  $ \sigma^{2} \int_{0}^{t}  \sum_{i=1}^{d}\Vert \{ Ad(k_{s-})V_{i} \}_{\mathcal{A}} \Vert^{2} ds +  \int_{0}^{t} \int_{U_{0}} \Vert \log(k_{s-}h)_{A}\Vert^{2}ds\Pi(dh)=  t\left (\sigma^{2} + \int_{U_{0}} \Vert \log (h)_{A} \Vert^{2}\Pi(dh) \right )$ and thus we obtain  that almost surely
\[
\frac{M_{t}}{t} \underset{t \to +\infty}{\longrightarrow} 0.
\]
Moreover, making the change of variable $h'= k_{s-} h k_{s-}^{-1}$ we obtain using the  $K$-invariant by conjugation of $\Pi$ 
\begin{align*}
 \int_{0}^{t} \int_{U_{0}} & \left ( \log(k_{s-}h)_{A} -r(h)\sum_{i=1}^{d} \theta^{i}(h) \{ Ad(k_{s-})V_{i} \}_{\mathcal{A}} \right ) ds \Pi(dh)\\ & = \int_{0}^{t} \int_{U_{0}} \log (h')_{A} -r(h') \sum_{i=1}^{d} \theta^{i}(k_{s-}^{-1}h') \{Ad(k_{s-})V_{i}\}_{\mathcal{A}} \  \Pi(dh')   ds \\
 &= \int_{0}^{t} \int_{U_{0}} \log (h')_{A} -r(h') \sum_{i=1}^{d} \sum_{j=1}^{d} (k_{s-})_{ij} \theta^j(h') (k_{s-})_{i1}V_{1} \  \Pi(dh') ds\\
 &= \int_{0}^{t} \int_{U_{0}} \log (h')_{A} -r(h') \theta^{1}(h') V_{1} \  \Pi(dh') ds \\
 &=t  \int_{\mathbb{S}^{d-1}} \int_{0}^{1} \log (S(r,\theta))_{A} -r\theta^{1}V_1 \  \nu(dr) d\theta\\
\end{align*}

By \eqref{link2} we have $\log (S(r,\theta))_{A} = \log ( \cosh(r) + \theta^{1} \sinh(r) ) V_{1} $ and the previous term  equals 
\begin{align*}
t  \int_{\mathbb{S}^{d-1}} \int_{0}^{1} \log ( \cosh(r) + \theta^{1} \sinh(r) )  -r\theta^{1}\  \nu(dr) d\theta  V_1 &= t \int_0^1 \left (  \int_{-1}^{1} \log  \left ( \cosh(r) + u \sinh(r)  \right )  -r u \frac{du}{2} \right ) \nu(dr) V_1 \\
&= t \int_0^1 \frac{r \cosh(r) - \sinh(r) }{\sinh(r)} \nu(dr) 
\end{align*}
It remains to consider the asymptotic behavior of the stochastic integral $\int_{0}^{t}\int_{U_{0}^{c}} \log(k_{s-}h)_{A} N(ds,dh)$. We know that there exist $T_i$ jump times of a Poisson process of intensity measure $\Pi(U_{0}^{c})$ and random variable $h_n$ i.i.d of law  $\Pi \vert_{U_{0}^c}/\Pi(U_{0}^{c}) $ and independent of $(T_i)_i$ such that
\[
\int_{0}^{t}\int_{U_{0}^{c}} \log(k_{s-}h)_{A} N(ds,dh) = \sum_{n, T_{n }\leq t} \log(k_{T_{n}^{-}} h_{n})_{A}. 
\]
Moreover, by invariance of $\Pi$ under conjugation by $K$ we check easily that the random variables $h'_{n}:=Ad(k_{T_{n}^{-}}) h_{n}$ are i.i.d of common law  $\Pi \vert_{U_{0}^c}/\Pi(U_{0}^{c}) $  and we have
\[
\int_{0}^{t}\int_{U_{0}^{c}} \log(k_{s-}h)_{A} N(ds,dh) = \sum_{n=1}^{N_{t}} \log (h'_{n})_{A},
\]
where $N_{t}$ is a Poisson process of intensity measure $\Pi(U_{0}^{c})$ and independent of $(h'_n)_n$. Moreover since $\int_{1}^{+\infty} r\nu(dr) < +\infty$ then $\log(h'_{n})_{A}$ is integrable and the law of large number ensures that
\[
\frac{1}{t} \int_{0}^{t} \int_{U_{0}^{c}} \log (k_{s-}h)_{A} N(ds,dh) \underset{t\to+\infty}{\longrightarrow} \Pi(U_{0}^{c}) \mathbb{E}[  \log(h'_{n})_{A} ] .
\]
The proof is ended by checking that
\[
\mathbb{E}[  \log(h'_{n})_{A} ] = \frac{1}{\Pi(U_{0}^{c})}\int_{1}^{+\infty}\frac{r \cosh(r) -\sinh(r)}{\sinh(r)} \nu(dr) V_{1}.
\]

\end{proof}

\begin{rmq}\label{rmqinteg}
When $\int_1^{+\infty}  r \nu(dr)  = + \infty $ we obtain $\mathbb{E} [ \left \vert  \log (h'_n)_A \right  \vert  ] = + \infty$. Nevertheless

\begin{align*}
\mathbb{E} \left [  - \min \left  (\log (h'_n)_A, 0  \right ) \right ]& = \int_1^{+ \infty} \int_{-1}^1 - \min \left( \log (\cosh(r + \theta^1 \sinh(r) ), 0   \right ) \frac{d\theta^1}{2} \nu(dr) \\ 
&= \int_1^{+\infty} \int_{e^{-r}}^1 -\log (v) \frac{dv}{1 \sinh(r)} \nu(dr) \\
&= \int_1^{+\infty} \frac{1}{2 \sinh(r)} (1- e^{-r} (r+1) ) \nu(dr) < + \infty.
\end{align*}
Now, applying a generalized law of large numbers we deduce that almost surely 
\[
\frac{1}{t} \int_0^t  \int_{U_0^c}  \log (k_{s^{-}} h)_A N(ds, dh) \underset{t \to + \infty}{\longrightarrow} + \infty,
\]
and so 
\[
\frac{\alpha_t}{t}  \underset{t \to + \infty}{\longrightarrow} + \infty. 
\]
\end{rmq}
The following proposition establishes that $g_t$ is bounded in expectation on a finite time interval. This result is used to prove the convergence of $n_t$ in the next Proposition. 

\begin{prop}\label{supremum}
Fix $T>0$. Then
\[\E\left [ \sup_{t \in [0,T]}  r\left ( g_t \right) \right ] < + \infty. \]
\end{prop}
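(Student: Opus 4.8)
The plan is to avoid working with $F(g):=q(e_0,g(e_0))=\cosh(r(g))$ directly: its large jumps scale it multiplicatively by $e^{r(h)}$, and $\int_1^{+\infty}e^r\,\nu(dr)$ need not be finite under Assumption \ref{integrability}. Instead I would control the smoother, linearly growing quantity $\psi(g):=\log F(g)=\log q(e_0,g(e_0))$. Since $1\le\cosh(r)\le e^r$ we have $r(g)-\log 2\le\psi(g)\le r(g)$, so it suffices to prove $\E[\sup_{t\in[0,T]}\psi(g_t)]<+\infty$. Moreover $\psi$ is genuinely smooth on $G$ (there is no conical singularity at the base point, because $q(e_0,g(e_0))\ge 1>0$ everywhere), which makes the It\^o formula \eqref{Ito} directly applicable.

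First I would apply \eqref{Ito} to $\psi$. From $\exp(tV_i)(e_0)=\cosh(t)e_0+\sinh(t)e_i$ and $S(r,\theta)(e_0)=\cosh(r)e_0+\sinh(r)\sum_i\theta^i e_i$ one gets $V_i^lF(g)=q(e_0,g(e_i))=:G_i(g)$ and $(V_i^l)^2F(g)=F(g)$. Since $(g(e_0),\dots,g(e_d))$ is a $q$-orthonormal frame, expanding $e_0$ in it gives $\sum_{i=1}^dG_i^2=F^2-1$, whence $\sum_i(V_i^l)^2\psi=(d-1)+F^{-2}$ (bounded) and $\sum_i(V_i^l\psi)^2=1-F^{-2}\le 1$. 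For the small-jump compensator, with $h=S(r,\theta)$ one has $\psi(gh)-\psi(g)=\log(\cosh r+w\sinh r)$ where $w:=\sum_i\theta^iG_i/F$ satisfies $|w|\le\sqrt{F^2-1}/F\le 1$ by Cauchy--Schwarz; a second-order Taylor expansion shows $|\log(\cosh r+w\sinh r)-rw|\le C r^2$ uniformly in $w\in[-1,1]$, so the compensator $\int_{U_0}\big(\psi(gh)-\psi(g)-r(h)\sum_i\theta^i(h)V_i^l\psi(g)\big)\Pi(dh)$ is bounded by a constant proportional to $\int_0^1 r^2\nu(dr)<+\infty$, uniformly in $g$. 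Consequently the drift term in \eqref{Ito} (the Lebesgue-in-time integral from the diffusion and the small-jump compensator) is at most $C_3 t$ for some constant $C_3$.

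The two remaining terms are handled separately. For the large jumps, each increment satisfies $\psi(g_{s^-}h)-\psi(g_{s^-})=\log(\cosh r(h)+w\sinh r(h))\le r(h)$, so their sum over $[0,t]$ is at most $\int_0^t\int_{U_0^c}r(h)\,N(ds,dh)$, whose expectation equals $T\int_{U_0^c}r(h)\,\Pi(dh)$, a finite multiple of $\int_1^{+\infty}r\,\nu(dr)$ --- finite exactly by Assumption \ref{integrability} (which also gives $\int_1^{+\infty}\nu(dr)<+\infty$, so large jumps occur at finite rate). For the martingale part $M_t^\psi$, Doob's $L^2$ inequality yields $\E[\sup_{t\le T}|M_t^\psi|]\le 2\,\E[\langle M^\psi\rangle_T]^{1/2}$, and the predictable bracket is bounded since $\sum_i(V_i^l\psi)^2\le 1$ and $(\psi(gh)-\psi(g))^2\le r(h)^2\le 1$ on $U_0$, giving $\E[\langle M^\psi\rangle_T]\le(\sigma^2+c\int_0^1 r^2\nu(dr))T<+\infty$. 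Combining, $\E[\sup_{t\le T}\psi(g_t)]\le\psi(g_0)+C_3T+\E[\sup_{t\le T}|M_t^\psi|]+T\int_{U_0^c}r\,\Pi(dh)<+\infty$, and $r\le\psi+\log 2$ finishes the argument.

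The main obstacle --- and the sole place where Assumption \ref{integrability} is essential --- is precisely the treatment of the large jumps: the naive candidate $\cosh(r(g_t))$ is unusable because its jumps are multiplicative and of exponential size, so the whole argument hinges on passing to the logarithm, for which the jumps become subadditive (each bounded by $r(h)$) and the condition $\int_1^{+\infty}r\,\nu(dr)<+\infty$ is exactly what is required. The only routine care needed is verifying the uniform-in-$g$ bounds on the generator terms and on the bracket of $M^\psi$.
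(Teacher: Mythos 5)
Your proof is correct, and it follows the same overall strategy as the paper --- apply the It\^o formula \eqref{Ito} to a function that dominates $r$, bound the drift and small-jump compensator uniformly, control the two martingales by Doob's inequality, and invoke Assumption \ref{integrability} exactly once, for the large-jump term --- but with a different and arguably cleaner choice of test function. The paper cannot apply It\^o to $r$ itself (it is singular at $\mathrm{Id}$), so it introduces an ad hoc smoothing $\tilde{r}$ with $\tilde{r}\geq r$ on $U_0$ and $\tilde{r}=r$ on $U_0^c$, and then needs a separate lemma, proved by explicit computations in polar coordinates, to show that $\Vert X^l\tilde{r}\Vert_\infty$ and $\Vert (X^l)^2\tilde{r}\Vert_\infty$ are finite for $X\in\mathrm{Vect}\{V_i\}$. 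You instead take $\psi=\log q(e_0,g(e_0))=\log\cosh(r(g))$, which is globally smooth because $q(e_0,g(e_0))$ is a matrix entry of $g$ bounded below by $1$, and satisfies $r-\log 2\leq\psi\leq r$; all the required derivative bounds ($\sum_i(V_i^l\psi)^2\leq 1$, $\sum_i(V_i^l)^2\psi=(d-1)+F^{-2}$, and the jump increments $\log(\cosh r+w\sinh r)$ with $\vert w\vert\leq 1$) then drop out of the $q$-orthonormality of the frame $(g(e_0),\dots,g(e_d))$ rather than from coordinate computations. Your identification of the large-jump term as the sole place where $\int_1^{+\infty}r\,\nu(dr)<+\infty$ is needed matches the paper exactly. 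The trade-off is purely one of presentation: the paper's smoothing argument is more pedestrian but requires no algebraic identities, while your choice of $\psi$ eliminates the auxiliary lemma at the cost of the (easy) verification that the frame relations give the stated bounds.
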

\begin{proof}
We cannot directly apply It\^o formula to $g \mapsto r(g)$ since it is not regular at $Id$. But we can find a smooth function $\tilde{r}$ such that $\tilde{r} \geq r$ on $U_0$ and $\tilde{r}=r$ on $U_0^c$. For such a function we have
\begin{align} \label{Ito2}
\tilde{r}(g_t)&= \tilde{r}(Id) + \hat{m}_t + \tilde{m}_t + I_t + J_t + \int_0^t \int_{U_0^c}\left ( \tilde{r}(g_{s^{-}}h)-\tilde{r}(g_{s^{-}} )  \right ) N(ds,dh). 
\end{align}
Where $\hat{m}_t:= \sigma \int_0^t V_i^l \tilde{r}(g_{s^{-}} )dB_s^i$ and $\tilde{m}_t:= \int_0^t \int_{U_0} \left ( \tilde{r}(g_{s^{-}}h)- \tilde{r}(g_{s^{-}})\right ) \tilde{N}(ds,dh)$ are martingales,  $I_t:=\sigma^{2}  \int_0^t \sum_{i=1}^{d} \left (  V_i^l \right )^2\tilde{r}(g_{s^{-}} ) ds $ and 
\[
J_t:=\int_0^t \int_{r \in [0,1]} \int_{\theta \in \mathbb{S}^{d-1}}  \left ( \tilde{r} \left (g_{s^{-}} S(r, \theta) \right)  -\tilde{r}(g_{s^{-}}) - r\sum_{i=1}^{d} \theta^i V_i^l \tilde{r}(g_{s^{-}})\right ) d \theta  \nu(dr)  ds
\] 
are processes with finite variation. \\

To prove the proposition we will bound the supremum on $[0,T]$ of each of these five terms by means of $\Vert X^l \tilde{r} \Vert_\infty$  and $\Vert (X^l)^2 \tilde{r} \Vert_\infty$ for some $X \in \mathrm{Vect} \{ V_i, i=1,\dots,d \}$. Thus we need the following lemma.

\begin{lem}
For $X \in \mathcal{P}=\mathrm{Vect} \{ V_i, i=1,\dots,d \}$ we have $\Vert X^l \tilde{r} \Vert_\infty < + \infty$ and $\Vert (X^l)^2 \tilde{r} \Vert_\infty < +\infty$.
\end{lem}

\begin{proof}[Proof of the lemma]
$U_0$ being a compact set it suffices to show that the supremum is finite on $U_0^c$. Since $\tilde{r}= r$ on $U_0^c$ it remains to prove that $\sup_{g\in U_0^c} \vert X^l r(g) \vert < +\infty$ and $ \sup_{g\in U_0^c} \vert (X^l)^2 r(g) \vert < +\infty$. The polar decomposition of $g$ can be written $g=\tilde{k} \exp(r(g) V_1) k$ for some $\tilde{k}, k \in K$. Setting $x \in \R^d$ such that $X= \sum_i x^i V_i $ we have
\[
r(g \exp(sX) ) = r\left (\exp(r(g) V_1) \exp \left (s \sum_{i=1}^d (kx)^i V_i \right ) \right ). 
\]
Let $\theta \in \mathbb{S}^{d-1}$ be such that $(kx)^i = \Vert x \Vert \theta^i$. Then we compute explicitly, for $g \in U_0^c$:
\begin{align*}
 r\left (\exp(r(g) V_1) \exp \left (s \sum_{i=1}^d (kx)^i V_i \right ) \right ) &= (\cosh)^{-1} \left (  \cosh(r(g)) \cosh(s \Vert x \Vert ) + \theta^1 \sinh(r(g)) \sinh(s \Vert x \Vert) \right ) \\
 &= r(g) + s \Vert x \Vert  \theta^1 + \frac{s^2}{2} \Vert x \Vert^2 \frac{\cosh(r(g))}{\sinh(r(g))} (1- (\theta^1)^2) + O(s^3).
\end{align*}
Then it comes that
\[
X^l r(g)=\left.  \frac{d}{ds} r(g \exp(sX) ) \right \vert_{s=0} = \Vert x \Vert \theta^1,
\]
and thus $\sup_{g \in U_0^c} \left  \vert X^l r(g) \right \vert \leq \Vert x \Vert$. \\
Moreover
\[
(X^l)^2 r(g) = \left. \frac{d^2}{ds^2} r(g \exp(sX) ) \right \vert_{s=0} =\Vert x \Vert^2 \frac{\cosh(r(g))}{\sinh(r(g))} (1- (\theta^1)^2)
\]
and so $\sup_{g \in U_0^c} \left \vert (X^l)^2 r(g)   \right \vert \leq 2 \Vert x \Vert^2.$

\end{proof}
Return to the proof of Proposition \ref{supremum}. By Doob's norm inequalities (see \cite{Kal}) we obtain
\begin{align*}
\E \left [\sup_{t \in [0,T]} \vert \hat{m}_t\vert^2 \right ]\leq 4\E \left [  (\hat{m}_T)^2 \right ] \leq 4\sigma^2 \E \left [ \int_0^T \sum_{i=1}^{d} \left \vert  V_i^l \tilde{r}(g_{s^{-}} )  \right \vert^2 ds \right  ] \leq 4\sigma^2 T d \max_i \Vert V_i^l \tilde{r} \Vert^2_\infty < +\infty.
\end{align*}
and 
\begin{align*}
\E \left [ \sup_{t \in [0,T]} \vert \tilde{m}_t\vert^2 \right  ] &\leq 4\E \left [  (\tilde{m}_T)^2   \right ] \leq 4 \E \left  [ \int_0^T \int_{r\in [0,1]} \int_{\theta \in \mathbb{S}^{d-1}} \left \vert \tilde{r}\left (g_{s^{-}}S(r, \theta) \right)-\tilde{r}(g_{s^{-}} )   \right \vert^2  \nu(dr) d\theta ds \right ] \\ &\leq 8 d T \int_0^1 r^2 \nu(dr)  \max_i \Vert V_i^l \tilde{r} \Vert^2_\infty < +\infty.
\end{align*}
 We have also
\begin{align*}
\E  \left [ \sup_{t \in [0,T]} \vert I_t \vert  \right ] \leq \sigma^2 T d \max_i \Vert (V_i^l)^2 \tilde{r} \Vert_\infty < +\infty .
\end{align*}
Moreover applying a Taylor inequality  to $u \in [0,1] \mapsto \tilde{r}(g_{s^{-}}S(ur,\theta) )$ it comes
\begin{align*}
\left \vert \tilde{r} \left (g_{s^{-}} S(r, \theta) \right) -\tilde{r}(g_{s^{-}}) - r\sum_{i=1}^{d} \theta^i V_i^l \tilde{r}(g_{s^{-}})    \right \vert \leq \frac{1}{2} r^2 \sup_{\theta \in \mathbb{S}^{d-1}} \left \Vert \left (\sum_i \theta^i V_i^l \right)^{2} \tilde{r} \right \Vert_\infty < +\infty
\end{align*}
and thus
\begin{align*}
\E  \left [ \sup_{t \in [0,T]} \vert J_t \vert  \right ] \leq \frac{T}{2} \left ( \int_0^1 r^2 \nu(dr) \right ) \sup_{\theta \in \mathbb{S}^{d-1}} \left \Vert \left (\sum_i \theta^i V_i^l \right)^{2}\tilde{r} \right \Vert_\infty < +\infty.
\end{align*}
Finally, to bound the supremum of the last term of \eqref{Ito2} we need the assumption \ref{integrability}. Indeed, since $\left \vert \tilde{r}(g_{s^{-}}S(r,\theta)) -\tilde{r}(g_{s^{-}})   \right \vert \leq r \sup_{\theta \in \mathbb{S}^{d-1}} \left \Vert \left (\sum_i \theta^i V_i^l \right)\tilde{r} \right \Vert_\infty$ we obtain
\begin{align*}
\E  \left [\sup_{t\in [0,T] }\left \vert \int_0^t \int_{U_0^c}  \tilde{r}(g_{s^{-}}h) -\tilde{r}(g_{s^{-}})   ds \Pi(dh) \right \vert  \right ]\leq T d \left (\int_{1}^{+\infty} r \nu(dr) \right )\max_i \left \Vert  V_i^l \tilde{r} \right \Vert_\infty < +\infty.
\end{align*}
\end{proof}

We can now state the main result of this section, the convergence of $n_t$ to some asymptotic random variable $n_\infty$ and the speed of convergence. 

\begin{prop}\label{convht}
Denote by $b_t =( b_t^{i} )_{i=2,\dots, d}$ the $\R^{d-1}$-valued process such that \[n_t =\exp \left (  \sum_{i=2}^{d} b_{t}^{i-1} (V_i-V_{1i} ) \right ). \] Then $b_t$ converges almost surely to $b_\infty$ exponentially fast with rate $\alpha$, i.e
\begin{align}
\limsup_{t \to +\infty}\frac{1}{t} \log \Vert b_t -b_{\infty} \Vert \leq - \alpha. \label{ht}
\end{align}
As a consequence $n_t$ converges almost-surely to $n_\infty := \exp \left ( \sum_{i=2}^{d} b_{\infty}^{i-1}(V_i-V_{1i} )   \right )$ and defining $h_t := e^{- t\alpha V_1} n_{\infty}^{-1} g_t$ we obtain
\begin{align}
\lim_{t \to +\infty} \frac{1}{t}  r\left ( h_t\right ) = 0 \quad \mathrm{a.s}.  \label{rform}
\end{align}
\end{prop}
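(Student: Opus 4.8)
The whole statement rests on one algebraic \emph{contraction identity}. Write each $g\in G$ in Iwasawa form $g=(g)_N(g)_A(g)_K$, let $b(g)\in\R^{d-1}$ be the coordinates defined by $(g)_N=\exp\big(\sum_{i=2}^{d}b^{i-1}(g)(V_i-V_{1i})\big)$, and let $\alpha(g)$ be given by $(g)_A=\exp(\alpha(g)V_1)$. Because $A$ normalizes $N$, because $N$ is abelian, and because $\mathrm{ad}(V_1)$ acts on $\mathcal N$ as $-\mathrm{Id}$, one gets for all $g,h\in G$
\[
b(gh)-b(g)=e^{-\alpha(g)}\,b\big((g)_K\,h\big),
\]
so that every increment of $b_t=b(g_t)$ is damped by $e^{-\alpha_t}$, which by Proposition~\ref{alpha} ($\alpha_t/t\to\alpha>0$) becomes an exponential contraction. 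Differentiating this identity in $h$ at the identity yields $V_i^l b(g)=e^{-\alpha(g)}F_i\big((g)_K\big)$ and $(V_i^l)^2 b(g)=e^{-\alpha(g)}G_i\big((g)_K\big)$ with $F_i,G_i$ bounded on the compact group $K$ (note $b\equiv 0$ on $K$); and in the Itô formula \eqref{Ito} applied to $g\mapsto b(g)$ the jump contribution is $e^{-\alpha_{s^-}}b(k_{s^-}h)$. All the coefficients are thus uniformly controlled: for small jumps a Taylor estimate bounds the compensated integrand by $C\,r(h)^2$, integrable since $\int_0^1 r^2\nu(dr)<\infty$, while for $h\in U_0^c$ integrating $b\big(k_{s^-}S(r,\theta)\big)$ over $\theta\in\s$ tames the stereographic blow-up of $b$ near the one missing boundary point and leaves a bound uniform in $k_{s^-}$ and $r$, integrable against $\nu$ on $[1,+\infty)$.

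Fix $\eps\in(0,\alpha)$. By the Lévy property, $g_{t+s}=g_t\eta_s$ where $(\eta_s)_{s\le 1}$ is an independent copy of $(g_s)_{s\le 1}$, so the contraction identity gives $b_{t+s}-b_t=e^{-\alpha_t}\,b(k_t\eta_s)$; combining the integrability bounds above with Proposition~\ref{supremum} I would obtain the finite-horizon estimate
\[
\E\Big[\sup_{s\in[0,1]}\|b_{t+s}-b_t\|\ \Big|\ \mathcal F_t\Big]\le C\,e^{-\alpha_t},
\]
with $C$ independent of $t$. By Proposition~\ref{alpha} there is an a.s.-finite $T_0$ with $\alpha_n\ge(\alpha-\eps)n$ for integers $n\ge T_0$, hence $\sum_n e^{(\alpha-2\eps)n}\,\E\big[\sup_{s\in[0,1]}\|b_{n+s}-b_n\|\,\ind_{n\ge T_0}\mid\mathcal F_n\big]\le C\sum_n e^{-\eps n}<\infty$ almost surely. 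A conditional Borel--Cantelli lemma then yields $\sum_n e^{(\alpha-2\eps)n}\sup_{s\in[0,1]}\|b_{n+s}-b_n\|<\infty$ a.s., which forces $b_t$ to converge to some $b_\infty$ and gives $\|b_\infty-b_N\|\le\sum_{n\ge N}\sup_{s\in[0,1]}\|b_{n+s}-b_n\|\le e^{-(\alpha-2\eps)N}\,o(1)$. Letting $\eps\to0$ produces \eqref{ht}, and then $n_t\to n_\infty=\exp\big(\sum_{i=2}^{d}b_\infty^{i-1}(V_i-V_{1i})\big)$ is immediate.

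The main obstacle is precisely the finite-horizon bound $\E[\sup_{s\le1}\|b(k\eta_s)\|\mid\mathcal F_t]\le C$ underlying this argument: since large jumps of the flow give $\alpha_t$ heavy negative increments, no exponential moment of $\alpha_t$ is available, so the exact rate $-\alpha$ cannot be extracted by taking global expectations of the stochastic integrals but must be produced pathwise, from the a.s. linear growth of $\alpha_t$ (Proposition~\ref{alpha}) together with the uniform supremum-moment control of the fresh increment supplied by Proposition~\ref{supremum}. Controlling that supremum, i.e. handling the blow-up of $b$ near the boundary point not covered by the Iwasawa chart, is the delicate step, for which the explicit polar/Iwasawa relations \eqref{link} and \eqref{link2} are the natural tool.

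Finally, for \eqref{rform} I would factor
\[
h_t=e^{-t\alpha V_1}n_\infty^{-1}g_t=\widetilde m_t\,e^{(\alpha_t-t\alpha)V_1}k_t,\qquad \widetilde m_t:=e^{-t\alpha V_1}\big(n_\infty^{-1}n_t\big)e^{t\alpha V_1}\in N.
\]
Since conjugation by $e^{-t\alpha V_1}$ scales $\mathcal N$ by $e^{+t\alpha}$, the $\mathcal N$-coordinates of $\widetilde m_t$ equal $e^{t\alpha}(b_t-b_\infty)$, of norm $\le e^{2\eps t}$ by \eqref{ht}; the $A$-exponent satisfies $\alpha_t-t\alpha=o(t)$ by Proposition~\ref{alpha}; and $k_t\in K$ is bounded. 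As $r$ is bi-$K$-invariant, $\cosh r(h_t)=q(e_0,h_t e_0)\le\|h_t\|$, and since the entries of an element of $N$ are polynomial in its coordinates one obtains $r(h_t)\le C\eps t+|\alpha_t-t\alpha|+O(1)$. Dividing by $t$, letting $t\to\infty$ and then $\eps\to0$, and using $r(h_t)\ge 0$, gives $\tfrac1t\,r(h_t)\to0$, which is \eqref{rform}.
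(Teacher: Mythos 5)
Your algebraic backbone is exactly the paper's: the identity $b(gh)-b(g)=e^{-\alpha(g)}\,b\bigl((g)_K h\bigr)$ is the paper's \eqref{eq1}--\eqref{hjs} (with $g=g_j$, $h=g_j^{-1}g_{j+s}$), the final assembly by unit time steps using $\alpha_j=(\alpha+o(1))j$ is the same, and your factorization $h_t=\widetilde m_t\,e^{(\alpha_t-t\alpha)V_1}k_t$ together with subadditivity of $r$ reproduces the paper's proof of \eqref{rform} almost verbatim. The problem is the middle step, which you yourself single out as ``the main obstacle'' and then do not prove: the claimed finite-horizon bound $\E\bigl[\sup_{s\in[0,1]}\Vert b_{t+s}-b_t\Vert\mid\mathcal F_t\bigr]\le C e^{-\alpha_t}$, i.e.\ $\E\bigl[\sup_{s\le1}\Vert b(k\,\eta_s)\Vert\bigr]\le C$ uniformly in $k\in K$. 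This is a genuine gap, and a serious one: by \eqref{link} one has $\Vert b(g)\Vert^2=2\bigl(\cosh r((g)_N)-1\bigr)$ with $r((g)_N)\le 2r(g)$, and the bound is saturated ($\sup_\theta\Vert b(S(r,\theta))\Vert\sim\sinh r$), so $\Vert b\Vert$ is genuinely exponential in $r$. Under Assumption \ref{integrability} only $\int_1^\infty r\,\nu(dr)<\infty$ is available, i.e.\ first moments of $r$, not exponential moments, so a first-moment bound on $\sup_{s\le 1}\Vert b(k\eta_s)\Vert$ cannot be obtained by crude domination. Your sphere-averaging remark addresses only the compensator (time-integral) terms of the It\^o formula for $b$; the supremum over $s$ sees the actual jumps at their actual angles, and after two or more large jumps the prefactor $e^{-\alpha(k\eta_{T_n^-})}$ is itself of order $e^{r(\eta_{T_n^-})}$ in the worst direction. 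One can hope that repeated integration over the uniform jump angles rescues the estimate, but that is a delicate multi-jump computation you have not carried out, and without it the conditional Borel--Cantelli step has nothing to feed on.

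The paper avoids this entirely by never taking expectations of $\Vert b\Vert$. It applies Proposition \ref{supremum} to the quantity $\sup_{s\in[0,1]}r(g_j^{-1}g_{j+s})$, whose first moment is finite; since these random variables are i.i.d.\ over $j$ (L\'evy property), the strong law gives $\sup_{s\in[0,1]}r(g_j^{-1}g_{j+s})=o(j)$ almost surely, and then the \emph{pathwise} bound $\Vert\tilde b_{j,s}\Vert^2=2\bigl(\cosh r\bigl((g_j^{-1}g_{j+s}k_j)_N\bigr)-1\bigr)\le 2\bigl(\cosh(2r(g_j^{-1}g_{j+s}))-1\bigr)$ yields $\Vert\tilde b_{j,s}\Vert\le e^{\eps j}$ for large $j$. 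This subexponential loss is harmless because it only shaves an arbitrary $\eps$ off the rate $\alpha$. If you want to salvage your write-up with minimal change, replace your conditional-expectation estimate by exactly this: pass the supremum through $r$ rather than through $b$, invoke Proposition \ref{supremum} plus the law of large numbers for the i.i.d.\ increments, and conclude pathwise. The rest of your argument (the contraction identity, the telescoping sum over integer times, and the proof of \eqref{rform}) then goes through as written.
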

\begin{proof}
Denoting by $[t]$ the integer part of $t$ we decompose $b_t = \sum_{j=1}^{[t]} ( b_j -b_{j-1} )  + b_t -b_{[t]} $. To prove that $b_t$ converge and \eqref{ht}  holds it is sufficient to verify that
\begin{align}
\limsup_{j \to \infty } \frac{1}{j} \log  \sup_{s\in [0,1]} \Vert b_j -b_{j +s} \Vert  \leq -\alpha \quad \mathrm{a.s} \label{dec}.
\end{align}
For $j \in \N$ and $s\in [0,1]$ we have $g_{j}^{-1} g_{j+s} =  k_{j}^{-1} a_{j}^{-1} n_{j}^{-1} n_{j+s} a_{j+s} k_{j+s}$ and thus
\begin{align} n_{j}^{-1} n_{j+s} = a_{j} \left (  g_{j}^{-1} g_{j+s} k_j \right )_N a_{j}^{-1}. \label{eq1} \end{align}
Denote by $\tilde{b}_{j,s}=  (\tilde{b}_{j,s}^{i})_{i=2, \dots, d} \in \R^{d-1}$ such that $\left (  g_{j}^{-1} g_{j+s} k_j \right )_N = \exp \left ( \sum_{i=2}^{d} \tilde{b}_{j,s}^{i} (V_i -V_{1i}) \right )$. Then, \eqref{eq1} implies that
\begin{align}
b_{j+s} -b_j = e^{- \alpha_j} \tilde{b}_{j,s}. \label{hjs}
\end{align}
Since $g_t$ is a Levy process, $ \left (\sup_{s\in [0,1] } r(g_{j}^{-1} g_{j+s}) \right )_{j\in \N}$ are i.i.d random variables. Moreover, by Proposition \ref{supremum} their common expectation $ \E [ \sup_{s \in [0,1]} r(g_{s})]$ is finite. Thus, applying the law of large number it comes
\begin{align}
\frac{1}{j} \sup_{s\in [0,1] } r(g_{j}^{-1} g_{j+s} ) \underset{j \to \infty}{\longrightarrow} 0 \quad \mathrm{a.s.} \label{LDGN}
\end{align}
Since $r( (g_{j}^{-1}g_{j+s} )_N ) \leq 2 r (g_{j}^{-1} g_{j+s})$ and, by  \eqref{link}, $ \Vert \tilde{b}_{j,s} \Vert^2 = 2 \left ( \cosh \left( r \left(  g_{j}^{-1} g_{j+s} k_{j} \right )_N \right ) -1 \right )$ we deduce from \eqref{LDGN}  that for $\eps >0$ there exists $j_0$ such that for $j>j_0$ 
\begin{align}
\Vert \tilde{b}_{j,s} \Vert \leq e^{\eps j}. \label{hjs2}
\end{align}
Since, by Proposition \ref{alpha}, $\alpha_j = j\alpha + o(j)$ thus \eqref{dec} follows from \eqref{hjs} and \eqref{hjs2}. To finish the proof of the proposition we need to check \eqref{rform}. We have
\[
r(h_t)=r( e^{-\alpha t V_1} n_{\infty}^{-1} n_t a_t ) \leq r( e^{-\alpha t V_1}a_t) + r( a_{t}^{-1} n_{\infty}^{-1}n_{t} a_{t}),
\]
and $r( e^{-\alpha t V_1}a_t) = \vert \alpha_t -\alpha t \vert  = o(t)$ and we obtain from \eqref{link} ( since $ a_{t}^{-1} n_{\infty}^{-1}n_{t} a_{t} \in N$), 
\[
r (a_{t}^{-1} n_{\infty}^{-1}n_{t} a_{t})= \cosh^{-1} \left ( 1 + e^{2 \alpha_t} \frac{\Vert b_t -b_\infty \Vert^2}{2}   \right ).
\]
So by \eqref{ht} we have also $r (a_{t}^{-1} n_{t}^{-1}n_{\infty} a_{t}) = o(t)$ and \eqref{rform} holds. 
\end{proof}
\begin{rmq}\label{rmq}
\begin{itemize}
\item Without integrability condition, we can nevertheless show that $g_t$ satisfy the \emph{irreducibility} and \emph{contraction} conditions of \cite{Liao04} and deduce that $\alpha_t$ converges almost surely to $+\infty$ and $n_t$ converges to $n_\infty$. Nevertheless, by remark  \ref{rmqinteg}, we obtain in the case where $\int_1^{+ \infty} r \nu(dr) = +\infty$, that almost surely $\frac{\alpha_t}{t}$ converges to $+\infty$.
\item In \cite{Liao04} the author uses a stronger hypothesis to prove the rate of convergence of a L\'evy process in a semi-simple group. It corresponds in our case to assume that $\int_0^{+\infty} r \nu(dr) < + \infty $. 
\end{itemize}
\end{rmq}

\subsection{Asymptotic behavior of the $\R^{1,d}$-component} 
The linear endomorphism $\xi \mapsto \exp ( V_1 ) \xi $ is diagonalisable with eigenvalues $-1, 0, +1$. Denote by $U^{-}$, $U^{0}$ and $U^{+}$ the respective eigenspaces. Explicitly $U^{-}= \mathrm{Vect} \{ e_0 - e_1  \} $, $U^{0}= \mathrm{Vect} \{  e_2, \dots, e_d  \}$ and $U^{+}= \mathrm{Vect} \{ e_0 + e_1  \} $. For $\xi \in \R^{1,d}$ we denote by $(\xi)^{-}$, $(\xi)^{0}$ and $(\xi)^{+}$ its projection on each eigenspace. Explicitly we obtain
\begin{align*}
(\xi )^{-}= -\frac{1}{2} q( \xi , e_0 + e_1 )& (e_0 -e_1), \quad  \quad(\xi)^{+}= \frac{1}{2} q(\xi, e_0 -e_1 ) (e_0 +e_1) \\
&(\xi)^{0}= \sum_{i=2}^{d} q(\xi, e_i ) e_i. 
\end{align*}

Recall that by definition, $\xi_t = \int_0^t g_s (e_0) ds$. The following proposition gives the asymptotic behavior of $\xi_t$. 
\begin{prop}\label{xit}
There exists an asymptotic random variable $\lambda_{\infty}>0$ such that 
\[
(n_\infty^{-1} \xi_t )^{-}  \underset{t \to + \infty}{\longrightarrow} \lambda_\infty (e_0 -e_1),
\] 
and moreover
\begin{align}
\limsup_{t \to + \infty} \frac{1}{t} \log \Vert (n_\infty^{-1} \xi_t )^{-} - \lambda_\infty (e_0 -e_1) \Vert \leq -\alpha. \label{convxit}
\end{align}
We also have
\begin{align}
\limsup_{t \to +\infty}  \frac{1}{t} \log \Vert (n_\infty^{-1} \xi_t )^{0}  \Vert \leq 0,  \quad \mathrm{and}, \quad  
\limsup_{t \to +\infty}  \frac{1}{t} \log \Vert (n_\infty^{-1} \xi_t )^{+}  \Vert \leq \alpha.  
\end{align}
\end{prop}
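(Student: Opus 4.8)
The plan is to compute the three eigenspace components of $n_\infty^{-1}\xi_t$ in closed form and then read off each asymptotic from the convergence results already established in Propositions \ref{alpha} and \ref{convht}. Since $n_\infty$ is a fixed (time-independent) linear isometry, it commutes with the time integral, so $n_\infty^{-1}\xi_t = \int_0^t n_\infty^{-1} g_s(e_0)\,ds$ and everything reduces to understanding the integrand $n_\infty^{-1}g_s(e_0)$. First I would simplify $g_s(e_0)$: writing $g_s = n_s a_s k_s$ and using that $k_s \in K$ fixes $e_0$ while $a_s = \exp(\alpha_s V_1)$ gives $a_s(e_0) = \cosh(\alpha_s)e_0 + \sinh(\alpha_s)e_1$, we get
\[
g_s(e_0) = n_s\left( \tfrac{e^{\alpha_s}}{2}(e_0+e_1) + \tfrac{e^{-\alpha_s}}{2}(e_0-e_1) \right).
\]

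Next I would record the action of $N$ on the eigenbasis $\{e_0+e_1, e_2, \dots, e_d, e_0-e_1\}$. A direct matrix computation shows that any $X = \sum_{i\geq 2} b^i(V_i - V_{1i}) = \sum_{i\geq 2} b^i\big((e_0-e_1)e_i^t + e_i(e_0+e_1)^t\big)$ satisfies $X(e_0-e_1)=0$, $Xe_j = b^j(e_0-e_1)$ for $j\geq 2$, $X(e_0+e_1) = 2\sum_i b^i e_i$, and $X^3 = 0$; hence $\exp(X) = \mathrm{Id} + X + \tfrac12 X^2$ is explicit. Applying this to $n_s$ and to $n_\infty^{-1} = \exp(-X_\infty)$ and collecting terms yields the decomposition
\[
n_\infty^{-1}g_s(e_0) = \tfrac{e^{\alpha_s}}{2}(e_0+e_1) + e^{\alpha_s}\sum_{i\geq 2}(b_s^i - b_\infty^i)e_i + \tfrac12\left(e^{\alpha_s}\Vert b_s - b_\infty\Vert^2 + e^{-\alpha_s}\right)(e_0-e_1),
\]
the crucial feature being that the $U^0$- and $U^-$-contributions involve only the \emph{difference} $b_s - b_\infty$, which decays with rate $\alpha$ by \eqref{ht}.

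I would then integrate over $[0,t]$ and treat each eigencomponent separately. For $U^-$, the coefficient of $e_0-e_1$ is $\lambda(t) := \tfrac12\int_0^t\big(e^{\alpha_s}\Vert b_s - b_\infty\Vert^2 + e^{-\alpha_s}\big)\,ds$. Fixing a small $\eps>0$, Proposition \ref{alpha} gives $(\alpha-\eps)s \leq \alpha_s \leq (\alpha+\eps)s$ and \eqref{ht} gives $\Vert b_s - b_\infty\Vert \leq e^{-(\alpha-\eps)s}$ for large $s$, so both integrands are $O(e^{(-\alpha+3\eps)s})$; hence $\lambda(t)$ converges to a finite $\lambda_\infty$, which is strictly positive since the integrand is everywhere positive (the term $e^{-\alpha_s}$ alone forces this). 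The same bounds give $\lambda_\infty - \lambda(t) = O(e^{(-\alpha+3\eps)t})$, and letting $\eps\downarrow 0$ yields \eqref{convxit}. For $U^0$, the $e_i$-coefficient $\int_0^t e^{\alpha_s}(b_s^i - b_\infty^i)\,ds$ has integrand $O(e^{2\eps s})$, giving growth rate $\leq 0$; for $U^+$, the coefficient $\tfrac12\int_0^t e^{\alpha_s}\,ds$ has integrand $O(e^{(\alpha+\eps)s})$, giving rate $\leq \alpha$. Letting $\eps\downarrow 0$ in each produces the two remaining estimates.

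The only genuinely delicate point is the explicit nilpotent computation leading to the closed form of $n_\infty^{-1}g_s(e_0)$: obtaining the $\Vert b_s - b_\infty\Vert^2$ factor in the $U^-$-direction is exactly what makes $\lambda_\infty$ finite and positive and pins down the rate $-\alpha$, since otherwise the $U^-$-component would be driven by an $e^{\alpha_s}$ term and blow up. Once this identity is in hand, the three statements are routine exponential estimates governed entirely by $\alpha_s \sim \alpha s$ and the rate-$\alpha$ decay of $b_s - b_\infty$.
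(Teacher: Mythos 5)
Your proof is correct, and it takes a genuinely different route from the paper's. The paper never computes the nilpotent action explicitly: it extracts each eigencomponent via the $q$-pairings $q(n_\infty^{-1}\xi_t, e_0+e_1)$, $q(\cdot,e_i)$, $q(\cdot,e_0-e_1)$, uses the isometry property to move $e^{-\alpha s V_1}$ and $n_\infty^{-1}$ onto the test vectors, and reduces everything to the single scalar estimate $r(h_s)=o(s)$ from Proposition \ref{convht} (via $q(h_s(e_0),e_0\pm e_1)\in[e^{\mp\alpha s-\tilde r_s},e^{\mp\alpha s+\tilde r_s}]$ up to constants). You instead exploit that $\mathcal N$ is abelian and 3-step nilpotent on $\R^{1,d}$ ($X^3=0$, $\exp X=\mathrm{Id}+X+\tfrac12 X^2$), which gives the closed form
\[
n_\infty^{-1}g_s(e_0)=\tfrac{e^{\alpha_s}}{2}(e_0+e_1)+e^{\alpha_s}\sum_{i\ge2}(b_s^i-b_\infty^i)e_i+\tfrac12\bigl(e^{\alpha_s}\Vert b_s-b_\infty\Vert^2+e^{-\alpha_s}\bigr)(e_0-e_1),
\]
and your subsequent exponential estimates, using $\alpha_s\sim\alpha s$ and the rate-$\alpha$ decay of $b_s-b_\infty$ separately rather than packaged into $r(h_s)$, are all valid (I checked the matrix identities $X(e_0-e_1)=0$, $Xe_j=b^j(e_0-e_1)$, $X(e_0+e_1)=2\sum b^ie_i$). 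What your version buys is an explicit, manifestly positive integrand for $\lambda_\infty$ (namely $\tfrac12\int_0^\infty(e^{\alpha_s}\Vert b_s-b_\infty\Vert^2+e^{-\alpha_s})\,ds$) and a transparent identification of exactly which cancellation ($b_s-b_\infty$ versus $e^{\alpha_s}$) produces the rate $-\alpha$; it also sidesteps a sign ambiguity in the paper's displayed formula for $(\xi)^-$. What the paper's version buys is brevity and independence from the explicit structure of $\mathcal N$, since only the geodesic-polar radius $r(h_s)$ enters. The two arguments rest on the same inputs (Propositions \ref{alpha} and \ref{convht}) and either is acceptable.
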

\begin{proof}
We have
\begin{align}
&- \frac{1}{2} q( n_\infty^{-1} \xi_t , e_0 + e_1 ) = \int_0^t -\frac{1}{2}  q(n_\infty^{-1} n_s a_s (e_0), e_0 + e_1 ) ds   \label{integral}
\end{align}
and the integrand can be written
\begin{align*}
-\frac{1}{2} q(n_\infty^{-1} n_s a_s (e_0), e_0 + e_1 ) &= -\frac{1}{2} q(e^{-\alpha s V_1}n_\infty^{-1} n_s a_s (e_0),e^{-\alpha s V_1}(e_0 + e_1) ) \\ &= -\frac{1}{2} e^{-\alpha s}q(h_s (e_0),(e_0 + e_1) ),
\end{align*}
where $h_s =e^{- \alpha s } n_{\infty}^{-1} g_s$ as defined in Proposition \ref{convht}.

Denote by $(\tilde{r}_s , \tilde{\theta}_s) \in \R^{+} \times \mathbb{S}^{d-1}$ the polar decomposition of  $h_s (e_0) \in \Hyp^{d}$. So $\tilde{r}= r(h_s)$ and 
\[
- \frac{1}{2}e^{-\alpha s}q(h_s (e_0),(e_0 + e_1) ) = \frac{1}{2}e^{-\alpha s} \left ( \cosh(\tilde{r}_s ) -\tilde{\theta}_s^{1} \sinh(\tilde{r}_s)   \right ) \in \frac{1}{2}[e^{ -(\alpha s  +\tilde{r}_s)} , e^{-(\alpha s-\tilde{r}_s)} ].
\]
Proposition \ref{convht} ensures that $\tilde{r}_s= o(s)$ a.s, so fixing $\eps >0$ arbitrary small we can find $s_0 >0$ such that for all $s >s_0$ the integrand of \eqref{integral} is positive and bounded by $e^{-(\alpha - \eps)s}$. This ensures the convergence of $(n_\infty^{-1} \xi_t )^{-}$ to $\lambda_\infty (e_0 -e_1)$ with $\lambda_{\infty} >0$. Moreover for $t>s_0$
\begin{align*}
\left \vert  -\frac{1}{2}q( n_\infty^{-1} \xi_t , e_0 + e_1 ) - \lambda_\infty   \right \vert \leq \int_t^{+ \infty} e^{-(\alpha - \eps) s} ds = \frac{1}{\alpha -\eps } e^{-(\alpha -\eps)t},
\end{align*}
which prove \eqref{convxit}.

Now 
\begin{align*}
(n_{\infty}^{-1} \xi_t )^0 = \sum_{i=2}^{d} \int_0^t q( n_{\infty}^{-1} n_s a_s (e_0),e_i )ds e_i, 
\end{align*}
and for $i=2, \dots, d$ we have  $  q( n_{\infty}^{-1} n_s a_s (e_0),e_i )= q( e^{-\alpha s V_1} n_{\infty}^{-1} n_s a_s (e_0) , e_i )  =  \tilde{\theta}_s^{i} \sinh(\tilde{r}_s)$ so $\vert q( n_{\infty}^{-1} n_s a_s (e_0),e_i ) \vert \leq e^{\tilde{r}_s}$ and  this ensures that $\limsup_{t \to +\infty}  \frac{1}{t} \log \Vert (n_\infty^{-1} \xi_t ){0}  \Vert \leq 0$.

Moreover, 
\begin{align*}
(n_{\infty}^{-1} \xi_t ){+} = \left ( \int_0^t \frac{1}{2} q\left ( n_{\infty}^{-1} n_s a_s (e_0), e_0 -e_1 \right  )ds \right ) (e_0 -e_1 )
\end{align*}
and 
\begin{align*}
\frac{1}{2} q( n_{\infty}^{-1} n_s a_s (e_0), e_0 -e_1 )&= \frac{1}{2} e^{\alpha s} q(h_t (e_0), e_0 -e_1 ) \\
&= \frac{1}{2}  e^{\alpha s} \left (  \cosh(\tilde{r}_s ) - \tilde{\theta}_s^{1} \sinh(\tilde{r}_s)  \right ) \in \frac{1}{2} [ e^{\alpha s - \tilde{r}_s } , e^{\alpha s + \tilde{r}_s } ].
\end{align*}
So 
\[
\Vert (n_{\infty}^{-1} \xi_t )^{+} \Vert \leq \Vert (n_{\infty}^{-1} \xi_{s_0} )^{+} \Vert + \int_{s_0}^{t} e^{(\alpha + \eps) s} ds,
\]
and thus $\limsup_{t \to +\infty}  \frac{1}{t} \log \Vert (n_\infty^{-1} \xi_t )^{+}  \Vert \leq \alpha.$
\end{proof}  
\subsection{Geometric description of the convergence}

Denote by $\mathbf{p}: \R^{1,d} \setminus \{  0 \} \to \Prob^d \R$ the projection onto the projective space of dimension $d$. The hyperbolo\"id $\Hyp^d$ is mapped onto the interior of a projective ball and its boundary  ($\partial \Hyp^d \simeq \mathbb{S}^{d-1}$) is the image of the $q$-isotropy cone
\[
\partial \Hyp^d := \mathbf{p} \left ( \{ \xi , q(\xi) =0 \} \setminus \{ 0 \} \right ).
\]

From the relation
\begin{align*}
q( \dot{\xi}_t, n_t (e_0 +e_1) ) = q( g_t (e_0) , n_t (e_0 +e_1) ) = q( e_0 , a_t^{-1} (e_0 +e_1) ) = e^{-\alpha_t} \underset{t \to \infty}{\longrightarrow} 0,
\end{align*}
we deduce that all limit points of $\mathbf{p}(\dot{\xi}_t)$ are $q$-orthogonal to $\theta_\infty:= \mathbf{p}( n_{\infty} (e_0 +e_1) )$. Since the only point of $\widebar{\mathbf{p}(\Hyp^d)}$
 which is $q$-orthogonal to $\theta_\infty$ is $\theta_\infty$ itself it comes that   $ \mathbf{p}(\dot{\xi}_t)$ converges to $\theta_{\infty}$ in $\Prob^d \R$. Now, identifying  $\Prob^d \R$ with its affine chart $\{ \xi, \ \xi^0 =1 \}$ we can consider that $\theta_\infty \in \mathbb{S}^d$. From \eqref{link} we deduce that $r_t \to +\infty$ and since $\mathbf{p}(\dot{\xi}_t) = \mathbf{p}( e_0 + \theta_t \frac{\sinh(r_t)}{\cosh(r_t)})$ it comes that $\theta_t$ converges to $\theta_{\infty}$ in $\mathbb{S}^d$. The two asymptotic random variables $\theta_\infty$ and $n_\infty$ are linked by 
 \[
 \mathbf{p}( e_0 + \theta_\infty ) = \mathbf{p}( n_\infty (e_0 + e_1) )
 \]
or more explicitly, $b_\infty \in \R^{d-1}$ (defined by $n_\infty = \exp \left ( \sum_{i =2}^{d}  b_\infty^{i-1} (V_1 -V_{1i} )\right )$ )  is the stereographic projection of $\theta_\infty$ 
\[
\theta_\infty = \frac{1}{1+ \Vert b_\infty \Vert^2} \left ( \begin{matrix} 1- \Vert b_\infty \Vert^2  \\ 2b_\infty  \end{matrix}   \right ).
\]

Concerning the asymptotic behavior of $\xi_t$, Proposition \ref{xit} ensures that $ q( \xi_t, n_{\infty} (e_0 + e_1 ) )$ converges to $\lambda_\infty$. Thus geometrically $\xi_t$ is asymptotic to an affine hyperplan which is $q$- orthogonal to $ n_{\infty} (e_0 + e_1 )$ (or $e_0 + \theta_\infty$ ) and passing by $\lambda_\infty (e_0 -e_1)$ .

\section{Lyapunov spectrum and stable manifolds} \label{LyapSpec}
\subsection{Lyapunov spectrum} 
The Levy process $\tilde{g}_t$, with values in $\widetilde{G}$ and starting at some $\tilde{g}$, can be obtained by solving the following left invariant stochastic integro-differential equation in $\widetilde{G}$

\begin{align}
 \forall f \in C^2( \tilde{G}), \quad f(\tilde{g}_t)& = f(\tilde{g}) + \sigma  \sum_{i=1}^d\int_0^t V_i^{l}f(\tilde{g}_{s^{-}})\circ dB^{i}_s + \int_0^t H_0^{l} ( \tilde{g}_{s^{-}})ds +\int_0^t \int_{U_0}  \left ( f(\tilde{g}_{s^{-}}h) -f(\tilde{g}_{s^{-}}) \right ) \tilde{N}(ds,dh)  \notag \\ 
&+ \int_0^t  \int_{U_0} \left ( f(\tilde{g}_{s^{-}}h) -f(\tilde{g}_{s^{-}}) -r(h) \sum_{i=1}^{d}  \theta^{i}(h) V_i^l f(\tilde{g}_{s^{-}}) \right )  ds \Pi(dh) \label{SDEtilde} \\ 
& + \int_0^t \int_{(U_0)^c} \left ( f(\tilde{g}_{s^{-}}h) -f(\tilde{g}_{s^{-}}) \right ) N(ds,dh). \notag
\end{align}

This stochastic differential equation induces a stochastic flow $\varphi_t$ in $\widetilde{G}$  which maps $\tilde{g}$ on the solution at time $t$ and starting at $\tilde{g}$ of \eqref{SDEtilde}. By left invariance $\varphi_t$ is also defined by 
\[
\begin{matrix}
\varphi_t : &  \widetilde{G} & \longrightarrow &  \widetilde{G} \\
& \tilde{g} & \longmapsto & \tilde{g} \tilde{g}_t,
\end{matrix}
\]
where $\tilde{g}_t$ is starting at $\mathrm{Id}$.

Denote by $\Vert \cdot \Vert $ any norm on $\mathrm{Lie}( \widetilde{G} )$ and  by $\Vert \cdot \Vert_{\tilde{g}}$ the left invariant (Finsler) metric associated in $ \widetilde{G}$ on $T_{\tilde{g}} \widetilde{G}$. For $v \in T_{\tilde{g}}  \widetilde{G}$ we aim to investigate the asymptotic exponential rate of growth or decay of $\Vert d \varphi_t (\tilde{g})(v) \Vert_{\varphi_t (\tilde{g})}$. Denote by $L_{\tilde{g}}$ the left translation  by  $\tilde{g}$ in $ \widetilde{G}$. By left invariance of the flow, $\Vert d \varphi_t (\tilde{g})(v) \Vert_{\varphi_t(\tilde{g})}= \Vert d \varphi_t(\mathrm{Id})( \widetilde{X}) \Vert_{\varphi_t(\mathrm{Id}) }$ where $ \widetilde{X}:= (dL_{\tilde{g}})^{-1}(v) \in T_{\mathrm{Id}}  \widetilde{G}=\mathrm{Lie}( \widetilde{G})$. For $\tilde{g}= (g, \xi) \in  \widetilde{G}$ and $ \widetilde{X}, \widetilde{Y}\in  \mathrm{Lie} ( \widetilde{G} )$ it comes
\begin{align}
\mathrm{Ad}(\tilde{g}) ( \widetilde{X}) &= \left ( \mathrm{Ad}(g)(X), gx -  \mathrm{Ad}(g)(X)\xi  \right)   \label{Ad} \\ 
\mathrm{ad}(\widetilde{Y})(\widetilde{X}) &= \left ( \mathrm{ad}(Y)(X), Yx -Xy\right ).
\end{align}
The endomorphism $\widetilde{X} \mapsto \mathrm{ad}(V_1,0)( \widetilde{X})$ is diagonalisable on $\mathrm{Lie}(\widetilde{G})$. Its eigenvalues are $-1,0 , 1$ and we denote by $ \widetilde{U}^{-}$, $ \widetilde{U}^{0}$ and $ \widetilde{U}^{+}$ the eigenspaces associated.

 We can check that 
\begin{align*}
\widetilde{X} \in \widetilde{U}^{+} &\Longleftrightarrow X \in \widebar{\mathcal{N}} \ \mathrm{and} \ x\in U^{+} \\
\widetilde{X} \in \widetilde{U}^{0} &\Longleftrightarrow X \in \mathcal{A}\oplus \mathcal{M}  \ \mathrm{and} \ x\in U^{0} \\
\widetilde{X} \in \widetilde{U}^{-} &\Longleftrightarrow X \in \mathcal{N} \ \mathrm{and} \ x \in U^{-}.
\end{align*}

Set \[ \tilde{g}_\infty := \left (n_\infty , \lambda_\infty (e_0 -e_1) \right) \in \widetilde{G} \] and $V_\infty^{-}:= \mathrm{Ad}(\tilde{g}_\infty)\left (  \widetilde{U}^{+} \right ) $, $V_\infty^{0}:= \mathrm{Ad}(\tilde{g}_\infty) \left ( \widetilde{U}^{0} +  \widetilde{U}^{+}\right )$.

We denote by \[ \tilde{h}_t := \left ( e^{-t \alpha V_1}, \ 0 \right ) \tilde{g}_{\infty}^{-1} \tilde{g}_t,\] where we recall  that $\tilde{g}_t:= \varphi_t(\mathrm{Id})$ is starting at $\mathrm{Id}$.

\begin{thm}\label{Lyap}
Let $ \widetilde{X} \in \mathrm{Lie}(  \widetilde{G} )$. For almost every trajectory
\[
\frac{1}{t} \log \Vert  d \varphi_t (\mathrm{Id})( \widetilde{X})  \Vert_{\varphi_t(\mathrm{Id})} \underset{t \to +\infty}{\longrightarrow} \left \{  \begin{matrix} \alpha & \mathrm{if} &  \widetilde{X} \in  \mathrm{Lie}(  \widetilde{G} ) \setminus V_\infty^0 \\ 0 & \mathrm{if} &  \widetilde{X} \in  V_\infty^0 \setminus V_\infty^-  \\ -\alpha & \mathrm{if} &  \widetilde{X} \in  V_\infty^- \setminus \{ 0 \}  \end{matrix}\right.
\]
\end{thm}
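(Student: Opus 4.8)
The plan is to reduce the computation of the Lyapunov exponents to the asymptotics of the adjoint action $\mathrm{Ad}(\tilde g_t^{-1})$ on $\mathrm{Lie}(\widetilde G)$, which is essentially already encoded in Propositions \ref{convht} and \ref{xit}. Since $\varphi_t = R_{\tilde g_t}$ is the right translation by $\tilde g_t$, for $\widetilde X \in \mathrm{Lie}(\widetilde G) = T_{\mathrm{Id}}\widetilde G$ the vector $d\varphi_t(\mathrm{Id})(\widetilde X) = dR_{\tilde g_t}(\mathrm{Id})(\widetilde X)$ lies in $T_{\tilde g_t}\widetilde G$, and the left invariance of the metric gives
\[
\Vert d\varphi_t(\mathrm{Id})(\widetilde X)\Vert_{\varphi_t(\mathrm{Id})} = \Vert (dL_{\tilde g_t})^{-1}\, dR_{\tilde g_t}(\mathrm{Id})(\widetilde X)\Vert = \Vert \mathrm{Ad}(\tilde g_t^{-1})(\widetilde X)\Vert.
\]
Thus the whole problem becomes the determination of the exponential rate of growth of $\Vert\mathrm{Ad}(\tilde g_t^{-1})(\widetilde X)\Vert$.

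First I would exploit the factorization $\tilde g_t = \tilde g_\infty\, (e^{t\alpha V_1},0)\,\tilde h_t$, which is exactly the definition of $\tilde h_t$ rewritten, so that
\[
\mathrm{Ad}(\tilde g_t^{-1}) = \mathrm{Ad}(\tilde h_t^{-1})\circ \mathrm{Ad}\big((e^{-t\alpha V_1},0)\big)\circ \mathrm{Ad}(\tilde g_\infty^{-1}).
\]
Since $\mathrm{ad}(V_1,0)$ is diagonalisable with eigenvalues $-1,0,1$ and eigenspaces $\widetilde U^-,\widetilde U^0,\widetilde U^+$, the middle factor acts by $e^{t\alpha}$ on $\widetilde U^-$, by $1$ on $\widetilde U^0$ and by $e^{-t\alpha}$ on $\widetilde U^+$. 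Writing $\widetilde Y := \mathrm{Ad}(\tilde g_\infty^{-1})(\widetilde X) = \widetilde Y^- + \widetilde Y^0 + \widetilde Y^+$ in this decomposition, one gets
\[
\mathrm{Ad}(\tilde g_t^{-1})(\widetilde X) = \mathrm{Ad}(\tilde h_t^{-1})\big(e^{t\alpha}\widetilde Y^- + \widetilde Y^0 + e^{-t\alpha}\widetilde Y^+\big).
\]
By the very definitions $V_\infty^- = \mathrm{Ad}(\tilde g_\infty)(\widetilde U^+)$ and $V_\infty^0 = \mathrm{Ad}(\tilde g_\infty)(\widetilde U^0 + \widetilde U^+)$, the three alternatives of the statement translate respectively into $\widetilde Y^- \neq 0$ (when $\widetilde X\notin V_\infty^0$), $\widetilde Y^- = 0$ with $\widetilde Y^0\neq 0$ (when $\widetilde X\in V_\infty^0\setminus V_\infty^-$), and $\widetilde Y = \widetilde Y^+\neq 0$ (when $\widetilde X\in V_\infty^-\setminus\{0\}$).

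The heart of the proof, and the main obstacle, is to show that the remaining factor is subexponential, namely $\frac1t\log\Vert\mathrm{Ad}(\tilde h_t^{\pm 1})\Vert \to 0$ almost surely. Writing $\tilde h_t = (h_t,\eta_t)$ one checks from the definitions that $\eta_t = e^{-t\alpha V_1} n_\infty^{-1}(\xi_t - \lambda_\infty(e_0 - e_1))$, and formula \eqref{Ad} bounds $\Vert\mathrm{Ad}(\tilde h_t^{\pm1})\Vert$ in terms of $\Vert\mathrm{Ad}(h_t^{\pm1})\Vert$, $\Vert h_t^{\pm1}\Vert$ and $1+\Vert\eta_t\Vert$. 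The linear part is controlled by Proposition \ref{convht}, which gives $r(h_t) = o(t)$ and hence $\Vert\mathrm{Ad}(h_t^{\pm1})\Vert, \Vert h_t^{\pm1}\Vert \le e^{o(t)}$. For the translation part I would decompose $n_\infty^{-1}\xi_t - \lambda_\infty(e_0-e_1)$ along $U^-,U^0,U^+$: applying $e^{-t\alpha V_1}$ multiplies these components by $e^{t\alpha},1,e^{-t\alpha}$, and combining with the three rate estimates of Proposition \ref{xit} (rate $\leq -\alpha$ on $U^-$ via \eqref{convxit}, rate $\leq 0$ on $U^0$, rate $\leq \alpha$ on $U^+$) yields $\limsup \frac1t\log\Vert\eta_t\Vert \leq 0$. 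This cancellation of the growing $U^+$-component against the contraction $e^{-t\alpha V_1}$ is precisely where the non semi-simplicity of $\widetilde G$, i.e. the presence of the translation factor, genuinely enters, and is the delicate step. Having $\limsup\frac1t\log\Vert\mathrm{Ad}(\tilde h_t^{\pm1})\Vert\leq 0$ for both signs, the elementary inequality $\Vert A\Vert\,\Vert A^{-1}\Vert\geq 1$ upgrades both limits superior to genuine limits equal to $0$.

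It then remains to read off the exponent in each case. In all three the dominant contribution comes from applying the subexponential operator $\mathrm{Ad}(\tilde h_t^{-1})$ to the largest surviving eigencomponent: $e^{t\alpha}\widetilde Y^-$ gives rate $\alpha$, $\widetilde Y^0$ gives rate $0$, and $e^{-t\alpha}\widetilde Y^+$ gives rate $-\alpha$. Upper bounds follow directly from $\Vert\mathrm{Ad}(\tilde h_t^{-1})\Vert\leq e^{o(t)}$; for the lower bounds one uses $\Vert\mathrm{Ad}(\tilde h_t^{-1})(v)\Vert\geq \Vert v\Vert/\Vert\mathrm{Ad}(\tilde h_t)\Vert \geq e^{-o(t)}\Vert v\Vert$ applied to the dominant eigencomponent, together with the strict spectral gap $\alpha$ which ensures that the subdominant terms, smaller by an extra factor $e^{-\alpha t}$, cannot cancel a component that itself decays at most subexponentially. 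This produces the exact limits $\alpha$, $0$ and $-\alpha$ announced in the three cases and completes the proof.
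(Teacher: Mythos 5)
Your proposal follows essentially the same route as the paper: reduce to $\Vert \mathrm{Ad}(\tilde g_t^{-1})(\widetilde X)\Vert$ by left invariance, factor $\mathrm{Ad}(\tilde g_t^{-1})=\mathrm{Ad}(\tilde h_t^{-1})\circ\mathrm{Ad}((e^{-t\alpha V_1},0))\circ\mathrm{Ad}(\tilde g_\infty^{-1})$, show $\Vert\mathrm{Ad}(\tilde h_t^{\pm1})\Vert=e^{o(t)}$ by controlling the linear part via Proposition \ref{convht} and the translation part $e^{-t\alpha V_1}(n_\infty^{-1}\xi_t)-\lambda_\infty e^{t\alpha}(e_0-e_1)$ componentwise via Proposition \ref{xit}, and read off the rates from the eigenspace decomposition. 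The argument is correct; the only cosmetic difference is that the paper gets the lower bound by choosing a Euclidean norm making $\widetilde U^-\oplus\widetilde U^0\oplus\widetilde U^+$ orthogonal, where you invoke the triangle inequality plus the spectral gap.
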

\begin{proof}
By left invariance of $\Vert \cdot \Vert$, $\Vert  d \varphi_t (\mathrm{Id})( \widetilde{X})  \Vert_{\varphi_t(\mathrm{Id})} = \Vert \mathrm{Ad}(\tilde{g_t}^{-1}) († \widetilde{X} )\Vert$. We set, for $\tilde{g} \in  \widetilde{G}$
\[
\Vert \mathrm{Ad}(\tilde{g}) \Vert := \sup_{ \widetilde{X} \neq 0} \frac{\Vert \mathrm{Ad}(\tilde{g})( \widetilde{X})\Vert}{\Vert  \widetilde{X} \Vert}. 
\]
Let $ \widetilde{X} \in \mathrm{Lie}(  \widetilde{G})$. Writting $\tilde{g}_t^{-1}= \left (  \tilde{h}_t \right )^{-1}  \left ( e^{-t \alpha V_1} ,0 \right ) \tilde{g}_{\infty}^{-1}$ we deduce that
\begin{align}
\frac{\Vert \mathrm{Ad} \left (   \left ( e^{-t \alpha V_1} ,0 \right ) \tilde{g}_{\infty}^{-1} \right )( \widetilde{X}) \Vert }{\Vert \mathrm{Ad} (  \tilde{h}_t  )  \Vert}  \leq \Vert &\mathrm{Ad}(\tilde{g}_t^{-1})( \widetilde{X})\Vert \notag \\ 
&\leq \Vert \mathrm{Ad} (\tilde{h}_t)^{-1}  \Vert \Vert \mathrm{Ad} \left (   \left ( e^{-t \alpha V_1} ,0 \right ) \tilde{g}_{\infty}^{-1} \right )( \widetilde{X}) \Vert. \label{ineg} 
\end{align}
Suppose for the moment that 
\begin{align}
&\limsup_{t \to + \infty} \frac{1}{t} \log \left \Vert \mathrm{Ad}  (  \tilde{h}_t )^{-1}  \right \Vert \leq 0  \label{limsup1} \\ 
\mathrm{and} \quad &\limsup_{t \to + \infty} \frac{1}{t} \log \Vert \mathrm{Ad}  ( \tilde{h}_t )  \Vert \leq 0. \label{limsup2}
\end{align}
Then we deduce from \eqref{ineg} that  $\frac{1}{t} \log \Vert \mathrm{Ad}(\tilde{g}_t^{-1})( \widetilde{X})\Vert $ and $\frac{1}{t} \log \Vert \mathrm{Ad} \left (   \left ( e^{-t \alpha V_1} ,0 \right ) \tilde{g}_{\infty}^{-1} \right )( \widetilde{X}) \Vert $ have the same limit when $t$ goes to $\infty$. The linear isomorphism $\mathrm{Ad} \left ( e^{-t \alpha V_1} ,0 \right )$ is diagonalisable with eigenvalues $e^{-\alpha t}$, $1$ and $e^{\alpha t}$ associated respectively to the eigenspaces $ \widetilde{U}^{+}$, $ \widetilde{U}^{0}$ and $ \widetilde{U}^{-}$. Decomposing $\mathrm{Ad}(\tilde{g}_\infty)^{-1} ( \widetilde{X})$ in the direct sum $ \widetilde{U}^{-} \oplus  \widetilde{U}^{0} \oplus  \widetilde{U}^{+}$ and using a Euclidean norm $\Vert \cdot \Vert$ on $\mathrm{Lie}({ \widetilde{G}})$ for which this decomposition is orthogonal, we deduce easily the theorem (note that the convergence is independant of the chosen norm).

Thus it remains to prove \eqref{limsup1} and \eqref{limsup2}. We have
\begin{align*}
\tilde{h}_t=\left ( e^{-t \alpha V_1}, \ 0 \right ) \tilde{g}_{\infty}^{-1}\tilde{g}_t &= \left ( e^{-t \alpha V_1}, \  0 \right ) \left (n_\infty^{-1} , \  -\lambda_{\infty} (e_0 -e_1)†\right  ) \left (n_t a_t k_t , \ \xi_t \right ) \\ &= \left ( h_t, \  \ e^{-t \alpha V_1 } \left ( n_{\infty}^{-1}\xi_t \right ) - \lambda_\infty e^{t \alpha} (e_0 -e_1)   \right ) \\ &= \left (\mathrm{Id} , \ e^{-t \alpha V_1 } \left (n_{\infty}^{-1}\xi_t \right ) - \lambda_\infty e^{t \alpha} (e_0 -e_1) \right  ) \ \left ( h_t , \ 0 \right )
\end{align*}

Let $\eps >0$. By Proposition \ref{convht} we can find $t_0 >0$ such that $\forall t >t_0$ $r(h_t) \leq \eps t$ and by Proposition \ref{xit}  we have 
\begin{align*}
\Vert e^{-t \alpha V_1 } \left ( n_{\infty}^{-1}\xi_t \right )- \lambda_\infty e^{t \alpha} (e_0 -e_1)   \Vert  & \leq e^{\alpha t} \Vert  (n_\infty^{-1}\xi_t )_{-} -\lambda_{\infty} (e_0-e_1) \Vert + \Vert  (n_{\infty}^{-1} \xi_t )_{0}\Vert + e^{-\alpha t}\Vert  (n_{\infty}^{-1}\xi_t)_{+} \Vert \\
& \leq e^{\eps t }
\end{align*}
Now using the following Lemma \ref{norms} we deduce easily \eqref{limsup1} and \eqref{limsup2}. 
\end{proof}
\begin{lem}\label{norms}
There exist positive constants $\alpha, \beta, \gamma$ such that for $g \in G$ and $\xi \in \R^{1,d}$ 
\begin{align*}
\Vert \mathrm{Ad} (g, 0)\Vert \leq \alpha e^{r(g)} \\
\Vert  \mathrm{Ad} (\mathrm{Id}, \xi )  \Vert \leq \beta \Vert \xi \Vert + \gamma.
\end{align*}
\end{lem}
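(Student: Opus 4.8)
The plan is to observe that both inequalities concern the operator norm of $\mathrm{Ad}$ on the finite-dimensional vector space $\mathrm{Lie}(\widetilde{G}) = \mathrm{Lie}(G) \times \R^{1,d}$, so by equivalence of norms it suffices to establish each bound for one convenient norm; passing to the fixed norm of the statement only enlarges the constants. I would therefore work with a product norm that is Euclidean on the $\R^{1,d}$ factor and, on the $\mathrm{Lie}(G)$ factor, is chosen to be $\mathrm{Ad}(K)$-invariant and to make the three eigenspaces $\mathcal{N}$, $\mathcal{A}\oplus\mathcal{M}$, $\widebar{\mathcal{N}}$ of $\mathrm{ad}(V_1)$ pairwise orthogonal (such a norm exists since $K$ is compact and these eigenspaces are the restricted root spaces). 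The two estimates then split along the two factors via formula \eqref{Ad}.

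For the first inequality I would specialize \eqref{Ad} to $\xi = 0$, giving $\mathrm{Ad}(g,0)(\widetilde{X}) = (\mathrm{Ad}(g)(X),\, gx)$, so that $\Vert \mathrm{Ad}(g,0)\Vert$ is controlled by the operator norm of $X \mapsto \mathrm{Ad}(g)(X)$ on $\mathrm{Lie}(G)$ and of $x \mapsto gx$ on $\R^{1,d}$. I would insert the Cartan (polar) decomposition $g = \tilde{k}_1 \exp(r(g) V_1) \tilde{k}_2$ with $\tilde{k}_1,\tilde{k}_2 \in K$ already used earlier. On $\R^{1,d}$ the factors $\tilde{k}_i$ act as Euclidean rotations, so the Euclidean operator norm of $g$ equals that of the symmetric positive matrix $\exp(r(g)V_1)$, namely its largest eigenvalue $e^{r(g)}$. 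On $\mathrm{Lie}(G)$, the $\mathrm{Ad}(K)$-invariance of the norm removes the two $K$-factors and reduces the estimate to $\Vert \mathrm{Ad}(\exp(r(g)V_1))\Vert = \Vert \exp(r(g)\,\mathrm{ad}(V_1))\Vert$; since $\mathrm{ad}(V_1)$ has eigenvalues $-1,0,1$, this operator has eigenvalues $e^{-r(g)},1,e^{r(g)}$ and, in the eigenspace-orthogonal norm, operator norm exactly $e^{r(g)}$. Combining the two factors yields $\Vert \mathrm{Ad}(g,0)\Vert \leq \alpha\, e^{r(g)}$ for a suitable constant.

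For the second inequality I would specialize \eqref{Ad} to $g = \mathrm{Id}$, obtaining $\mathrm{Ad}(\mathrm{Id},\xi)(\widetilde{X}) = (X,\, x - X\xi)$, which is affine-linear in $\xi$. Using finite-dimensionality to bound the bilinear map $(X,\xi)\mapsto X\xi$ by $\Vert X\xi\Vert \leq C \Vert X\Vert \,\Vert\xi\Vert$, I get $\Vert \mathrm{Ad}(\mathrm{Id},\xi)(\widetilde{X})\Vert \leq (1 + C\Vert\xi\Vert)\Vert\widetilde{X}\Vert$, whence $\Vert \mathrm{Ad}(\mathrm{Id},\xi)\Vert \leq \beta \Vert\xi\Vert + \gamma$ with $\beta = C$ and $\gamma = 1$.

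I expect the only genuinely delicate point to be the exponent $e^{r(g)}$, rather than the $e^{2r(g)}$ that the crude submultiplicative estimate $\Vert gXg^{-1}\Vert \leq \Vert g\Vert\,\Vert g^{-1}\Vert\,\Vert X\Vert$ would give; obtaining the sharp exponent is precisely what forces the use of the eigenvalue structure of $\mathrm{ad}(V_1)$ together with an $\mathrm{Ad}(K)$-invariant, eigenspace-adapted norm. It is worth noting, however, that this refinement is not strictly necessary for the application in Theorem \ref{Lyap}: there the lemma is applied to $\tilde{h}_t$ with $r(h_t) = o(t)$ by Proposition \ref{convht} and with the translation part of size $e^{o(t)}$ by Proposition \ref{xit}, so even the weaker exponent would still yield $\limsup_t \tfrac1t\log\Vert\mathrm{Ad}(\tilde h_t)^{\pm 1}\Vert \leq 0$.
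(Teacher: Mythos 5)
Your proof is correct and follows essentially the same route as the paper: the paper also reduces to convenient norms by equivalence, takes the Frobenius-type norm $\sqrt{\mathrm{Tr}(X^tX)+x^tx}$ (which is exactly the $\mathrm{Ad}(K)$-invariant, eigenspace-orthogonal norm you describe) to get $\Vert\mathrm{Ad}(g,0)\Vert=e^{r(g)}$, and proves the second bound by the same computation $\Vert x-X\xi\Vert\leq\Vert x\Vert+C\Vert X\Vert\,\Vert\xi\Vert$. Your write-up merely makes explicit the eigenvalue argument that the paper compresses into ``we obtain easily.''
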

\begin{proof}
All norms are equivalent and it suffices to check the inequalities for some particuliar norms. Let choose the following $SO(d)$-invariant euclidean norm on $\mathrm{Lie}( \widetilde{G})$
\[
\Vert (X,x) \Vert:= \sqrt{ \mathrm{Tr}(X^{t}X) + x^{t}x }.
\]
We obtain easily 
\[
\Vert   \mathrm{Ad}(g, \ 0)\Vert = e^{r(g)}.
\]
Taking now $\Vert (X,x) \Vert:= \sqrt{\mathrm{Tr}(X^{t}X)} + \sqrt{x^{t}x} $ we get 
\begin{align*}
\Vert \mathrm{Ad}(\mathrm{Id}, \ \xi ) (X,x) \Vert &= \sqrt{\mathrm{Tr}(X^{t}X)} + \Vert x - X\xi \Vert 
\leq \Vert (X, x )\Vert  + \Vert X\xi \Vert \\ &\leq  \Vert (X, x )\Vert + \sqrt{\mathrm{Tr}(X^{t}X)} \max_i \vert \xi^{i} \vert  \leq \Vert (X, x )\Vert \left  (1 +\max_i \vert \xi^{i} \vert \right ).
\end{align*}
Thus $\Vert \mathrm{Ad}(\mathrm{Id}, \ \xi ) \Vert \leq 1 + \alpha \Vert \xi \Vert$ for a constant $\alpha >0$ independant of $\xi$.

\end{proof}

\subsection{Stable manifolds}

First, remark that $V_{\infty}^{-}$ and $V_{\infty}^{0}$ are Lie sub-algebras of $\mathrm{Lie}( \widetilde{G} )$. Denote by 
\[ \mathcal{V}^{-}_{\infty}:= \exp (V_{\infty}^{-} ), \quad  \mathrm{and}  \quad \mathcal{V}^{0}_{\infty}:= \exp (V_{\infty}^{0}) \]
the closed subgroup of $ \widetilde{G}$ associated.

Fix now a euclidean norm $\Vert \cdot \Vert$ on $\mathrm{Lie}( \widetilde{G})$ which is $\mathrm{Ad}(K)$-invariant. Such a norm is of the form
\[
 \left \Vert \left (  \left ( \begin{matrix} 0 & b^{t} \\ b & C \end{matrix} \right ), \ x  \right ) \right \Vert := \sqrt{ \kappa^2 b^{t}b + \beta^2 \mathrm{Tr} \left ( C^{t}C \right )+ \gamma^2 \vec{x}^{t}\vec{x} + \delta^2 (x^0)^2 },
\] 
for some positive constants $\kappa$, $\beta$, $\gamma$ and $\delta$. We denote by $d$ the distance in $\widetilde{G}$ associated to the left invariant Riemanian metric induced by $\Vert \cdot \Vert$. To simplify notations, we denote by $d(g,h)$ the distance between $(g,0)$ and $(h,0)$ for $g, h \in G$.  

The following result shows that the stable manifold associated to $\varphi_t$ is $\varphi_{0} \mathcal{V}_{\infty}^{-}$. 

\begin{thm} \label{stable}
Let $\tilde{g}$ and $\tilde{g}'$ two distinct points in $ \widetilde{G}$. 
\begin{itemize}
\item If $\tilde{g}' \in \tilde{g} \mathcal{V}_{\infty}^{-}$ then 
\[
\frac{1}{t} \log d \left (  \varphi_t (\tilde{g}) , \varphi_t (\tilde{g}') \right ) \underset{ t \to + \infty }{\longrightarrow} -\alpha.
\]
\item If $\tilde{g}' \notin \tilde{g} \mathcal{V}_{\infty}^{-}$ then
\[
\liminf_{t \to \infty} d\left (  \varphi_t (\tilde{g}) , \varphi_t (\tilde{g}') \right ) >0.
\]
\end{itemize}
\end{thm}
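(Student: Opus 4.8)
The plan is to reduce everything, via the left-invariance of both the flow and the metric, to the distance from $\mathrm{Id}$ of a single conjugated element. Writing $\tilde g'=\tilde g\tilde u$ with $\tilde u:=\tilde g^{-1}\tilde g'\neq\mathrm{Id}$, left-invariance gives
\[
d\big(\varphi_t(\tilde g),\varphi_t(\tilde g')\big)=d\big(\tilde g\tilde g_t,\tilde g\tilde u\tilde g_t\big)=d\big(\mathrm{Id},\tilde g_t^{-1}\tilde u\tilde g_t\big).
\]
The first thing I would prove is a \emph{comparison lemma} measuring how conjugation distorts the left-invariant distance: pushing a path $\gamma$ from $\mathrm{Id}$ to $w$ through $x\mapsto\tilde h^{-1}x\tilde h$ multiplies its left-logarithmic velocity by $\mathrm{Ad}(\tilde h^{-1})$, so that
\[
\frac{d(\mathrm{Id},w)}{\Vert\mathrm{Ad}(\tilde h)\Vert}\ \leq\ d\big(\mathrm{Id},\tilde h^{-1}w\tilde h\big)\ \leq\ \Vert\mathrm{Ad}(\tilde h^{-1})\Vert\,d(\mathrm{Id},w).
\]

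Next I would insert the factorization $\tilde g_t=\tilde g_\infty\,(e^{t\alpha V_1},0)\,\tilde h_t$ from Theorem \ref{Lyap}, giving $\tilde g_t^{-1}\tilde u\tilde g_t=\tilde h_t^{-1}w_t\tilde h_t$ with $w_t:=(e^{-t\alpha V_1},0)\,\tilde v\,(e^{t\alpha V_1},0)$ and $\tilde v:=\tilde g_\infty^{-1}\tilde u\tilde g_\infty$, and recall from Propositions \ref{convht}, \ref{xit} and Lemma \ref{norms} that $\frac1t\log\Vert\mathrm{Ad}(\tilde h_t^{\pm1})\Vert\to0$. By the comparison lemma the exponential rate of $d(\mathrm{Id},\tilde g_t^{-1}\tilde u\tilde g_t)$ equals that of $d(\mathrm{Id},w_t)$. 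Since $\mathrm{Ad}(e^{-t\alpha V_1},0)$ is diagonal with eigenvalues $e^{-\alpha t},1,e^{\alpha t}$ on $\widetilde U^+,\widetilde U^0,\widetilde U^-$, the behaviour of $w_t$ is governed by the position of $\tilde v$ relative to this decomposition, and the hypothesis $\tilde g'\in\tilde g\mathcal V_\infty^-$ translates into $\tilde v\in\exp(\widetilde U^+)$.

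In the \emph{stable} case $\tilde v=\exp(\widetilde X)$ with $\widetilde X\in\widetilde U^+$, so $w_t=\exp(e^{-\alpha t}\widetilde X)$, hence $d(\mathrm{Id},w_t)\asymp e^{-\alpha t}$ and the comparison lemma yields the limit $-\alpha$. In the \emph{unstable} case $\tilde v\notin\exp(\widetilde U^+)$ I would split according to whether $\mathrm{Ad}(\tilde v)$ preserves the flag $\widetilde U^+\subset\widetilde U^+\oplus\widetilde U^0$, i.e. whether $\tilde v$ lies in the parabolic $P:=\exp(\widetilde U^0\oplus\widetilde U^+)$. If $\tilde v\notin P$ then $w_t$ acquires a $\widetilde U^-$-component growing like $e^{\alpha t}$, so $\Vert\mathrm{Ad}(w_t)\Vert\to\infty$, $d(\mathrm{Id},w_t)\to\infty$, and the comparison lemma forces $d(\mathrm{Id},\tilde g_t^{-1}\tilde u\tilde g_t)\to+\infty$; the $\liminf$ is then positive.

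The remaining sub-case $\tilde v\in P\setminus\exp(\widetilde U^+)$ is the one I expect to be the \emph{main obstacle}: there $w_t\to w_\infty=\exp(\widetilde Z^0)$ with $\widetilde Z^0\in\widetilde U^0\setminus\{0\}$, so $d(\mathrm{Id},w_t)$ stays bounded and the logarithmic comparison only gives rate $0$, too weak to exclude $d\to0$. To get a genuine positive lower bound I would use the explicit shape $\tilde h_t=(h_t,\eta_t)$ with $r(h_t)=o(t)$ and Iwasawa form $h_t=\nu_t e^{\rho_t V_1}k_t$ ($\nu_t\in N$, $\rho_t=o(t)$, $k_t\in K$), together with two structural facts about the chosen norm: its $\mathrm{Ad}(K)$-invariance and the orthogonality $\widetilde U^0\perp\widetilde U^-$ (which one checks directly on the explicit form of $\Vert\cdot\Vert$). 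Writing $\tilde h_t^{-1}w_\infty\tilde h_t=\exp\big(\mathrm{Ad}(\tilde h_t^{-1})\widetilde Z^0\big)$ and $\widetilde Z^0=(Z,z)$, the map $\mathrm{Ad}(\nu_t^{-1})$ adds to $Z$ only a vector of $\widetilde U^-$ (because $[\widetilde U^-,\widetilde U^0]\subseteq\widetilde U^-$ and $[\widetilde U^-,\widetilde U^-]=0$), $\mathrm{Ad}(e^{-\rho_t V_1})$ fixes the $\widetilde U^0$-part while only rescaling the $\widetilde U^-$-part, and $\mathrm{Ad}(k_t^{-1})$ is an isometry; the orthogonality then yields $\Vert\mathrm{Ad}(\tilde h_t^{-1})\widetilde Z^0\Vert\geq\Vert\widetilde Z^0\Vert$ uniformly, while for a pure translation direction the Lorentzian identity $\Vert h^{-1}z\Vert^2=2q(h^{-1}z,e_0)^2+\Vert z\Vert^2\geq\Vert z\Vert^2$ plays the same role. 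The delicate final point is to convert this uniform Lie-algebra lower bound, controlling the error between $w_t$ and $w_\infty$, into a positive lower bound on $d(\mathrm{Id},\cdot)$, ruling out that the logarithm grows in a direction along which $\exp$ returns near $\mathrm{Id}$; I would handle this by keeping the $\widetilde U^-$-growth in check so that the limiting displacement sits genuinely in the non-compact neutral directions.
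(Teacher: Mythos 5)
Your reduction to $d(\mathrm{Id},\tilde g_t^{-1}\tilde u\tilde g_t)$, the conjugation comparison lemma, and the stable case are fine and essentially parallel the paper (which phrases the small-displacement comparison through $iii)$ and $iv)$ of Proposition \ref{dprop} applied to $\mathrm{Ad}(\tilde g_t^{-1})\widetilde Y$ rather than through path distortion; for elements tending to $\mathrm{Id}$ the two are interchangeable). The gaps are in the second bullet. For $\tilde v\notin P$ your comparison lemma only yields
\[
d\bigl(\mathrm{Id},\tilde h_t^{-1}w_t\tilde h_t\bigr)\;\geq\;\frac{d(\mathrm{Id},w_t)}{\Vert\mathrm{Ad}(\tilde h_t)\Vert},
\]
and this lower bound does \emph{not} stay away from $0$: the subgroups $\exp(\mathcal N)$ and the translations are exponentially distorted in the left-invariant metric (from \eqref{link} and $vi)$--$vii)$ of Proposition \ref{dprop} one gets $d(\mathrm{Id},\exp(sZ))\asymp\log s$ for $Z\in\mathcal N$, and similarly for $U^{\pm}$ by conjugating a unit translation by a boost), so $d(\mathrm{Id},w_t)$ grows only linearly in $t$ even though $\Vert\mathrm{Ad}(w_t)\Vert\asymp e^{\alpha t}$; meanwhile $\Vert\mathrm{Ad}(\tilde h_t)\Vert\asymp e^{r(h_t)}(1+\Vert\eta_t\Vert)$ is controlled only as $e^{o(t)}$, and almost surely it is \emph{not} polynomially bounded (the fluctuations of $\alpha_t-\alpha t$ are of order $\sqrt{t\log\log t}$). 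So ``$d(\mathrm{Id},w_t)\to\infty$ plus the comparison lemma'' does not give a positive $\liminf$. The paper sidesteps this by staying at the Lie-algebra level, where $\Vert\mathrm{Ad}(\tilde g_t^{-1})\widetilde Y\Vert$ grows genuinely exponentially (Theorem \ref{Lyap} already absorbs the $e^{o(t)}$ correction), and then deriving a contradiction from $iv)$ of Proposition \ref{dprop}.

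The neutral case $\tilde v\in P\setminus\exp(\widetilde U^{+})$ is where you correctly locate the main difficulty, but your sketch does not close it. A uniform lower bound $\Vert\mathrm{Ad}(\tilde h_t^{-1})\widetilde Z^{0}\Vert\geq\Vert\widetilde Z^{0}\Vert$ cannot by itself give a lower bound on $d\bigl(\mathrm{Id},\exp(\mathrm{Ad}(\tilde h_t^{-1})\widetilde Z^{0})\bigr)$, because $iv)$ of Proposition \ref{dprop} holds only on a neighbourhood $\mathcal O$ of $0$, and $\widetilde U^{0}$ contains the compact directions $\mathcal M\subset\mathcal K$ along which $\exp$ returns to $\mathrm{Id}$; ``keeping the $\widetilde U^{-}$-growth in check'' does not address this, since the danger is rotation, not $\widetilde U^{-}$. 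This is exactly the role of the paper's Lemma \ref{lem3}, a uniform bound $d(\mathrm{Id},g^{-1}amg)\geq C>0$ over all $g\in G$ proved by an explicit polar-coordinate and triangle-inequality computation, not by any estimate on $\Vert\mathrm{Ad}\Vert$; the paper then splits the neutral case into $X^{0}\neq 0$ (project to $G$ via $v)$ and apply Lemma \ref{lem3}) and $X^{0}=0$, $x^{0}\neq 0$ (use the Lorentz invariant $q(\xi)=q(x^{0})<0$, i.e.\ the correct identity $\Vert\eta\Vert^{2}=2q(\eta,e_0)^{2}-q(\eta)$, which is the bound you gesture at). Without Lemma \ref{lem3} or a substitute for it, your argument for the second bullet is incomplete.
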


The properties of $d$ we need in the proof  of Theorem \ref{stable} are sum up in the following proposition
\begin{prop}\label{dprop}
\begin{enumerate}[i)]
\item Left invariance \[\forall \tilde{g}, \tilde{h} \in  \widetilde{G},  \quad d( \tilde{g} , \tilde{g} \tilde{h} ) = d( \mathrm{Id} , \tilde{h} ). \]
Thus $ d( \mathrm{Id}, \ \tilde{g}^{-1} )= d( \mathrm{Id}, \ \tilde{g} )$ and triangularity inequality writes:
\[
\forall \tilde{g}, \tilde{h} \in  \widetilde{G},  \quad  d( \mathrm{Id} , \tilde{g} \tilde{h} ) \leq d( \mathrm{Id} , \tilde{g}  ) +d( \mathrm{Id}, \   \tilde{h} ) 
\]
\item  K-right invariance \[ \forall \tilde{g} \in  \widetilde{G} \ \mathrm{and} \ k \in K, \quad  d \left ( (k, 0) , \tilde{g} (k, 0) \right ) = d( \mathrm{Id} , \tilde{g} ) \]
\item For $ \widetilde{X} \in \mathrm{Lie}( \widetilde{G} ) $ 
\[
d \left ( \mathrm{Id} , \exp(  \widetilde{X} ) \right ) \leq \Vert  \widetilde{X} \Vert. 
\]
\item There exists a neighborood $\mathcal{O}$ of $0$ in $\mathrm{Lie}( \widetilde{G})$ and a constant $C>0$ such that
\[
\forall  \widetilde{X}  \in \mathcal{O}, \quad C \Vert  \widetilde{X} \Vert \leq d\left (\mathrm{Id}, \exp( \widetilde{X}) \right )
\]
\item $\forall (g, \xi ) \in  \widetilde{G}$
\[
d\left ( \mathrm{Id}, g \right ) \leq d \left ( \mathrm{Id},\ (g, \xi )   \right )
\]
\item  For $g=S(r, \theta) R$ and $g'= S(r', \theta')R' $ we have
\[
\frac{\kappa}{\sqrt{\kappa^2 +2\beta^2}}d\left ( S(r, \theta) , S(r', \theta') \right ) \leq d(g, g') \]
\item  For all $r \geq 0$ and $\theta \in \mathbb{S}^{d-1}$ \[d \left ( \mathrm{Id}, S(r,\theta ) \right )= \kappa r\] 
\end{enumerate}
\end{prop}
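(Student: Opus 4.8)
The plan is to establish the seven items roughly in order, treating (i)--(iii), (v) and (vii) as consequences of invariance and of exhibiting explicit length-realizing curves, and to concentrate the real work on the comparison estimate (vi).

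For (i), left invariance of $d$ is built into the construction: $d$ is the distance of a left invariant metric, so every left translation $L_{\tilde g}$ is an isometry, whence $d(\tilde g,\tilde g\tilde h)=d(\mathrm{Id},\tilde h)$. Symmetry of $d$ then gives $d(\mathrm{Id},\tilde g^{-1})=d(\tilde g,\mathrm{Id})=d(\mathrm{Id},\tilde g)$, and the stated triangle inequality is the usual one combined with left invariance. For (ii) I would write the right translation as $R_{(k,0)}=L_{(k,0)}\circ\mathrm{conj}_{(k,0)^{-1}}$, so that its differential is a composition of a $dL$ (an isometry) with $\mathrm{Ad}(k^{-1})$ at the Lie algebra level; since $\Vert\cdot\Vert$ is $\mathrm{Ad}(K)$-invariant, $R_{(k,0)}$ is an isometry, and applying it to $\mathrm{Id}$ and $\tilde g$ yields (ii). For (iii), the curve $t\mapsto\exp(t\widetilde X)$, $t\in[0,1]$, joins $\mathrm{Id}$ to $\exp(\widetilde X)$ with constant left-invariant velocity $\widetilde X$, hence has length $\Vert\widetilde X\Vert$; as $d$ is an infimum of lengths, $d(\mathrm{Id},\exp\widetilde X)\le\Vert\widetilde X\Vert$.

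Item (iv) is the standard local bi-Lipschitz equivalence of a Riemannian distance with a chart. Fix a relatively compact neighborhood of $\mathrm{Id}$ on which $\exp$ is a diffeomorphism; pulling the metric tensor back through $\exp$ gives a smooth field of inner products on a neighborhood of $0$, comparable on a compact subneighborhood to the fixed inner product $\Vert\cdot\Vert$. Any path from $\mathrm{Id}$ to $\exp(\widetilde X)$ either stays in this neighborhood, where its length is at least a fixed multiple of the chord $\Vert\widetilde X\Vert$, or it leaves the neighborhood, in which case its length already exceeds a fixed positive constant; shrinking $\mathcal O$ so that $\Vert\widetilde X\Vert$ stays below that constant makes the first alternative binding, giving $C\Vert\widetilde X\Vert\le d(\mathrm{Id},\exp\widetilde X)$. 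For (v) I would use the group homomorphism $q:\widetilde G\to G$, $(g,\xi)\mapsto g$, whose kernel is the translation group $\R^{1,d}$. Because $\Vert\cdot\Vert$ splits orthogonally as $\mathrm{Lie}(G)\oplus\R^{1,d}$ and $q$ carries left invariant metrics to left invariant metrics, $q$ is a Riemannian submersion onto $G$ with the metric $\Vert\cdot\Vert|_{\mathrm{Lie}(G)}$, hence $1$-Lipschitz, so $d_G(\mathrm{Id},g)\le d(\mathrm{Id},(g,\xi))$. On the other hand the subgroup $G\times\{0\}$ carries intrinsic (left invariant) distance equal to $d_G$, and intrinsic distance dominates ambient distance, so $d(\mathrm{Id},(g,0))\le d_G(\mathrm{Id},g)$; chaining the two inequalities gives (v) (equivalently, $(g,\xi)\mapsto(g,0)$ is $1$-Lipschitz).

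Items (vi) and (vii) are the crux. Equip $\Hyp^d\simeq G/K$ with the $G$-invariant metric $\bar g$ obtained by restricting $\Vert\cdot\Vert$ to $\mathcal P=\mathcal K^\perp=\mathrm{Vect}\{V_1,\dots,V_d\}$; then $\rho:G\to G/K$ is a Riemannian submersion, hence $1$-Lipschitz. Since $R,R'\in K$ fix the base point, $\rho(g)=\rho(S(r,\theta))$ and $\rho(g')=\rho(S(r',\theta'))$, so $\bar d(\rho g,\rho g')\le d(g,g')$. For (vii), the curve $t\mapsto S(tr,\theta)$ has constant left-invariant velocity $r\sum\theta^iV_i\in\mathcal P$ of norm $\kappa r$, giving $d(\mathrm{Id},S(r,\theta))\le\kappa r$, while projecting by $\rho$ to $\Hyp^d$, where the $\bar g$-distance from the base point to $S(r,\theta)(e_0)$ equals $\kappa r$, gives the reverse inequality, hence equality. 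The remaining and most delicate point is Lipschitz control of the \emph{hyperbolic section} $s:G/K\to G$, $S(r,\theta)K\mapsto S(r,\theta)$. Differentiating $S(r,\theta)=\exp(r\sum\theta^iV_i)$ via the $d\exp$ formula, I expect a radial variation to give a purely $\mathcal P$-valued velocity $\sum\theta^iV_i$, whereas an angular variation $\dot\theta\perp\theta$ gives $\sinh(r)\sum\dot\theta^iV_i\in\mathcal P$ together with a vertical term $(1-\cosh r)\sum\dot\theta^iV_{1i}\in\mathcal K$. The $\mathcal P$-part reproduces exactly the $\bar g$-speed and the vertical part is the excess; using $\Vert V_i\Vert=\kappa$ and $\Vert V_{1i}\Vert=\beta\sqrt2$ the pointwise ratio of the $G$-speed to the $\bar g$-speed is $\sqrt{1+\tfrac{2\beta^2}{\kappa^2}\tfrac{\cosh r-1}{\cosh r+1}}\le\sqrt{\kappa^2+2\beta^2}/\kappa$, so $s$ is $(\sqrt{\kappa^2+2\beta^2}/\kappa)$-Lipschitz. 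Combining, $d(S(r,\theta),S(r',\theta'))\le(\sqrt{\kappa^2+2\beta^2}/\kappa)\,\bar d(\rho g,\rho g')\le(\sqrt{\kappa^2+2\beta^2}/\kappa)\,d(g,g')$, which rearranges to (vi). The main obstacle is precisely this $d\exp$ computation and the verification that the vertical term stays uniformly subordinate to the horizontal one for all $r\ge0$, since it is exactly this uniform bound that pins down the constant $\kappa/\sqrt{\kappa^2+2\beta^2}$.
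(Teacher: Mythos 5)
Your proposal is correct and follows essentially the same route as the paper: items $i)$--$v)$ are handled by the same invariance, length-of-$\exp$-curve, local chart comparison, and projection/1-Lipschitz arguments, and your submersion-plus-Lipschitz-section packaging of $vi)$ reduces to exactly the paper's two path-length inequalities, namely bounding the length of an arbitrary path $S(r_s,\theta_s)R_s$ below by $\int\kappa\sqrt{\dot r_s^2+\sinh^2(r_s)\Vert\dot\theta_s\Vert^2}\,ds$ and the length of the companion path $S(r_s,\theta_s)$ above by $\tfrac{\sqrt{\kappa^2+2\beta^2}}{\kappa}$ times that same integral, using $(\cosh r-1)^2\leq\sinh^2 r$. (Your vertical term should read $(\cosh r-1)(\dot\theta\theta^t-\theta\dot\theta^t)$ rather than a pure $V_{1i}$ combination, but by $SO(d)$-invariance its norm is the same $\sqrt{2}\,\beta(\cosh r-1)\Vert\dot\theta\Vert$, so the constant is unaffected; you also supply the short argument for $vii)$ that the paper leaves implicit.)
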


\begin{proof}[Proof of Proposition \ref{dprop}]
$i)$ and $ii)$.The left and $K$-right invariance follows from the definition of the metric as being a left invariant Riemannian metric on $\widetilde{G}$ defined from an $\mathrm{Ad}(K)$-invariant inner product on $\mathrm{Lie}(\widetilde{G})$. Inequality $iii)$ is obtained remarking that the length of the path $t \in [0,1] \mapsto \exp \left ( t \widetilde{X}  \right )$ is equal to $\Vert \widetilde{X} \Vert$. 

$iv)$. Denote by $\widehat{\exp}: \mathrm{Lie}(\widetilde{G})\to \widetilde{G}$ be the exponential map at $\mathrm{Id}$ induced by the metric $\Vert \cdot \Vert$ in $ \widetilde{G}$:  for $\tilde{X}\in \mathrm{Lie}(\widetilde{G})$, $\widehat{\exp}(\tilde{X})= \gamma_{\tilde{X}}(1)$ where $t \in [0,1] \mapsto \gamma_{\tilde{X}}(t)$  is the geodesic starting from $\mathrm{Id}$ in the direction $\tilde{X}$. The differential at $0$ of $\widehat{\exp}$ is known to be identity and there exists a sufficient small neighborhood $\mathcal{O}'$ of $0\in \mathrm{Lie}(\widetilde{G})$ such that: \begin{equation} \forall \tilde{X} \in \mathcal{O}', \quad \Vert \tilde{X} \Vert = d\left (I, \widehat{\exp}(\tilde{X}) \right). \label{truc} \tag{$*$}\end{equation} Furthermore, the map $\widehat{\exp}^{-1}\circ \exp $ can be defined in a neighborhood of $0$ and its differential at $0$ is the identity: $\widehat{\exp}^{-1}\circ \exp(\tilde{X})= \tilde{X}+o(\Vert \tilde{X} \Vert)$. So we can find $\mathcal{O}$ neighborhood of $0$ and $C>0$ such that for all $\tilde{X}\in \mathcal{O}$, $C \Vert \tilde{X} \Vert  \leq \Vert \widehat{\exp}^{-1}\circ \exp(\tilde{X}) \Vert \leq \frac{1}{C} \Vert \tilde{X} \Vert$. Taking $\mathcal{O}$ small enough so that $\widehat{\exp}^{-1}\circ \exp(\mathcal{O}) \subset \mathcal{O}'$, we can apply \eqref{truc} to $\widehat{\exp}^{-1}\circ \exp(\tilde{X})$, thus yielding $\Vert \widehat{\exp}^{-1}\circ \exp(\tilde{X}) \Vert= d\left(I, \exp(\tilde{X})\right)$ for every $\tilde{X}\in \mathcal{O}$.

$v)$. Each path $s \in [0,1]  \mapsto (g_s, \xi_s)$ joining $\mathrm{Id}$ to $(g,\xi)$ is of length $\int_{0}^1 \Vert (g_s^{-1} \dot{g}_s , g_s^{-1} \xi_s ) \Vert ds $ which is greater than $\int_{0}^1 \Vert (g_s^{-1} \dot{g}_s,  0 ) \Vert ds $ corresponding to the path $ s \in [0,1 ] \mapsto (g_s, 0)$ joining $\mathrm{Id}$ to $(g, 0)$. 

$vi)$. Consider a path $ s\in [0,1] \mapsto S(r_s, \theta_s)R_s$ joining $g$ to $ g'$. We compute, using dot notation for $\frac{d}{ds}$ 
\[
R_s^{-1} S(r_s, \theta_s)^{-1} \frac{d}{ds} (S(r_s, \theta_s) R_s )= \left ( \begin{matrix} 0 & \dot{r}_s \theta_s^t + \sinh(r_s) \dot{\theta_s}^t \\ \dot{r}_s \theta_s+ \sinh(r_s) \dot{\theta}_s & (\cosh(r_s) -1) \left ( \dot{\theta}_s \theta_s^{t} -\theta_s \dot{\theta}_s^t \right ) + R_s^{-1}\dot{R}_s 
  \end{matrix}  \right ).
\]
 Its length  $ l:= \int_0^1 \Vert R_t^{-1} S(r_t, \theta_t)^{-1} \frac{d}{dt} S(r_t, \theta_t) R_t  \Vert dt $ is larger than
 \[ \int_0^{1} \kappa \Vert  \dot{r}_s \theta_s+ \sinh(r_s) \dot{\theta}_s \Vert ds = \int_0^{1} \kappa  \sqrt{ (\dot{r}_s)^2 + \sinh(r_s)^2 \Vert \dot{\theta}_s  \Vert^2  }ds. \] Moreover, the path $s \mapsto S(r_s, \theta_s) $ which join $S(r, \theta)$ to $S(r', \theta')$ is of length 
 \begin{align}
 \int_0^1 &\sqrt{\kappa^2 \left ((\dot{r}_s)^2 + \sinh(r_s)^2 \Vert \dot{\theta}_s  \Vert^2  \right )+ \beta^2 \left ( 2 (\cosh(r) -1)^2 \Vert \dot{\theta}_s \Vert^2 \right ) }ds \\ & \quad \quad  \quad \quad \quad \quad \quad \quad\quad \quad\quad \quad\quad \quad\quad \quad\leq \frac{\sqrt{\kappa^2 +2 \beta^2}}{\kappa}  \int_0^{1} \kappa  \sqrt{ (\dot{r}_s)^2 + \sinh(r_s)^2 \Vert \dot{\theta}_s  \Vert^2  }ds.
 \end{align}
 Thus
 \[
 d( S(r, \theta), S(r', \theta') ) \leq   \frac{\sqrt{\kappa^2 +2 \beta^2}}{\kappa} l
 \]
and taking the infimum over all the path joining $g$ to $g'$ we obtain $vi)$. 
 \end{proof}

\begin{proof}[Proof of Theorem \ref{stable}]
Let  $ \widetilde{Y} \in \mathrm{Lie}( \widetilde{G} ) \setminus \{  0 \}$ be such that $\exp(  \widetilde{Y} ) =\tilde{g}^{-1} \tilde{g}' $. Then
\[
d \left (  \varphi_t (\tilde{g}) , \varphi_t (\tilde{g}') \right ) = d\left ( \mathrm{Id}, \ \tilde{g}_t^{-1} \exp\left (  \widetilde{Y} \right ) \tilde{g}_t \right )= d \left ( \mathrm{Id}, \ \exp \left ( \mathrm{Ad}(\tilde{g}_t^{-1})  (  \widetilde{Y}  )  \right )   \right ). 
\]
By Theorem \ref{Lyap}, if $ \widetilde{Y} \in \mathrm{Ad}(g_{\infty})( \widetilde{U}^{+})$ (i.e. if $g' \in g \mathcal{V}_{\infty}^{-}$) then $\Vert \mathrm{Ad}(g_t^{-1})  (  \widetilde{Y}  )   \Vert$ converge to $0$ exponentially fast with rate $\alpha$ and so for large $t$ it evolves in $\mathcal{O}$. Thus, using $iii)$ and $iv)$ of Proposition \ref{dprop} we obtain the first point of the theorem as a direct consequence of Theorem \ref{Lyap}.

Set $\widetilde{X}:= \mathrm{Ad}(\tilde{g}_{\infty}^{-1}) (  \widetilde{Y} )$, thus 
\[ \widetilde{Y} = \mathrm{Ad}(\tilde{g}_{\infty})\left ( \left ( \widetilde{X} \right )^{+} + \left ( \widetilde{X} \right )^{0} + \left ( \widetilde{X} \right )^{-} \right ),\]
and write
 \begin{align*} 
\left ( \widetilde{X} \right )^{+} = ( X^{+}, \ x^{+} ), &\ \mathrm{where} \ \  X^{+} \in \widebar{\mathcal{N}} \ \mathrm{and}  \ \  x^{+} \in U^{+} \\
\left ( \widetilde{X} \right )^{0} = ( X^{0}, \ x^{0} ), &\ \mathrm{where} \ \  X^{0} \in \mathcal{A} \oplus \mathcal{M}   \ \mathrm{and}  \ \  x^{0} \in U^{0} \\
\left ( \widetilde{X} \right )^{-} = ( X^{-}, \ x^{-} ), &\ \mathrm{where} \ \  X^{-} \in \mathcal{N} \ \mathrm{and}  \ \  x^{-} \in U^{-}.
 \end{align*}

Now suppose that $\tilde{g}' \notin \tilde{g} \mathcal{V}^{-}_{\infty}$ which is equivalent to  $\left ( \widetilde{X} \right )^{0} \neq 0$ or $\left ( \widetilde{X}\right )^{-} \neq 0$.

Suppose first that $\left ( \widetilde{X}\right )^{-} \neq 0$. Thus $\widetilde{Y} \in \mathrm{Lie}(\widetilde{G}) \setminus V_\infty^0$ and by Theorem \ref{Lyap} $\Vert  \mathrm{Ad}(\tilde{g}_t^{-1}) \widetilde{Y} \Vert $ converges to $+\infty$ exponentially fast. Now suppose by contradiction that ${ \displaystyle   \liminf_{t \to + \infty }d \left ( \mathrm{Id},\exp \left ( \mathrm{Ad}(\tilde{g}_t^{-1} )   \widetilde{Y}    \right ) \right ) =0   }$. Then we can find a $s_t$ such that $\displaystyle d \left ( \mathrm{Id}, \ \exp \left ( \mathrm{Ad}(\tilde{g}_{s_t}^{-1})(  \widetilde{Y}  ) \right )  \right )$ converges to $0$ and for large $t$ $ \mathrm{Ad}(\tilde{g}_{s_t}^{-1})(  \widetilde{Y}  )$ lies in $\mathcal{O}$. The inequality $iv)$ of Proposition \ref{dprop} give us the contradiction and we have proved the second point of the Theorem if $\left ( \widetilde{X}\right )^{-} \neq 0$.

So we can suppose $\left ( \widetilde{X}\right )^{-} = 0$ and $\left ( \widetilde{X}\right )^{0} \neq 0$.

\noindent {\bf First case: } $\mathbf{X^{0}†\neq 0}$ . By $v)$ of Proposition \ref{dprop}, $ {\displaystyle d( \mathrm{Id} , g_t^{-1} \exp( Y ) g_t ) \leq d\left ( \mathrm{Id}, \ \tilde{g}_t^{-1} \exp\left (  \widetilde{Y} \right ) \tilde{g}_t \right ) }$ and it remains to prove that ${\displaystyle \liminf_{t \to \infty} } d( \mathrm{Id} , g_t^{-1} \exp( Y ) g_t ) $ is positive.  But $ Y = \mathrm{Ad}(n_{\infty})(X)$ and $X= X^{+} + X^{0} \in \widebar{\mathcal{N}} \oplus \mathcal{A} \oplus \mathcal{M} \setminus \widebar{\mathcal{N}}$. Consider an Iwasawa decomposition of $\exp(X)$ in $G$
\[
\exp(X) =  \bar{n} am ,  \ n \in \widebar{N}, \ a \in A, \ \text{and} \ \ m \in M,
\]
and $am \neq \mathrm{Id} $. Since $ \forall g,h \  \ d(\mathrm{Id}, gh) \leq d(\mathrm{Id}, g) +d(\mathrm{Id}, h)$ we get 
\begin{align*}
 d( \mathrm{Id} , g_t^{-1} \exp( Y ) g_t )  &= d( \mathrm{Id}, h_t^{-1} e^{-t \alpha V_1}  \bar{n}ame^{t \alpha V_1}  h_t^{-1} ) \\
 &\geq d( \mathrm{Id}, h_t^{-1} e^{-t \alpha V_1}ame^{t \alpha V_1}  h_t) -d( \mathrm{Id}, h_t^{-1} e^{-t \alpha V_1}  \bar{n} e^{t \alpha V_1}  h_t) .
 \end{align*}
 Writting $\bar{n}= \exp( Z), Z \in \widebar{\mathcal{N}}$ and $d( \mathrm{Id}, h_t^{-1} e^{-t \alpha V_1}  \bar{n} e^{t \alpha V_1}  h_t )$ is dominated by $ e^{-t \alpha + r(h_t) } \Vert  Z \Vert $  (by Lemma \ref{norms} and $iii)$ of Proposition \ref{dprop}) and converges exponentially fast to zero (recall that by Proposition \ref{convht} $r(h_t) =o(t) $ a.s. ). Thus it remains to prove that $\liminf d( \mathrm{Id}, h_t^{-1} e^{-t \alpha V_1}ame^{t \alpha V_1}  h_t)  >0$ to finish the proof in the first case. This is ensured by the following Lemma.
 
\begin{lem}\label{lem3}
Let $a \in A$ and $m \in M$ s.t $am\neq \mathrm{Id}$. Then $\exists C>0, \ \forall g \in G, \ d( \mathrm{Id},  g^{-1} am g ) > C$.
\end{lem}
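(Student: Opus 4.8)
The plan is to exploit that $d(\mathrm{Id},g^{-1}amg)$ only probes the conjugacy class of $am$, and that this class is pinned down by conjugation invariants. Writing $a=\exp(uV_1)$, the boost $a$ acts as $\left(\begin{smallmatrix}\cosh u & \sinh u \\ \sinh u & \cosh u\end{smallmatrix}\right)$ on $\mathrm{Vect}\{e_0,e_1\}$ and trivially on $e_2,\dots,e_d$, while $m\in M\simeq SO(d-1)$ fixes $e_0,e_1$ and rotates $e_2,\dots,e_d$; thus $a$ and $m$ commute and the eigenvalues of $am$ are $e^{\pm u}$ together with the unit-modulus eigenvalues of $m$. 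I would split according to whether $a=\mathrm{Id}$ or not, since the two regimes call for genuinely different invariants.

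First suppose $a\neq\mathrm{Id}$, i.e. $u\neq 0$. For $g\in G$ set $h:=g^{-1}amg$. Since the spectral radius is conjugation invariant, $\rho(h)=\rho(am)=e^{|u|}>1$. Writing $h$ in polar form $h=S(r(h),\theta(h))R$ and using that $S(r,\theta)$ is conjugate in $K$ to $\exp(rV_1)$ while $K$ acts by Euclidean isometries, one checks that the Euclidean operator norm is $\Vert h\Vert_{\mathrm{op}}=e^{r(h)}$; hence $e^{|u|}=\rho(h)\leq\Vert h\Vert_{\mathrm{op}}=e^{r(h)}$, so $r(h)\geq|u|$. Applying $vi)$ of Proposition \ref{dprop} with $g'=\mathrm{Id}$ and then $vii)$ yields
\[
d(\mathrm{Id},h)\geq\frac{\kappa}{\sqrt{\kappa^2+2\beta^2}}\,d\bigl(\mathrm{Id},S(r(h),\theta(h))\bigr)=\frac{\kappa^2}{\sqrt{\kappa^2+2\beta^2}}\,r(h)\geq\frac{\kappa^2}{\sqrt{\kappa^2+2\beta^2}}\,|u|>0,
\]
a bound independent of $g$, so any $C$ below it works.

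Now suppose $a=\mathrm{Id}$ and $m\neq\mathrm{Id}$. Here $\rho(am)=1$ and the previous estimate is vacuous: indeed $m$ itself has $r(m)=0$, so $r(g^{-1}mg)$ can vanish and the $r$-based bound genuinely breaks down — this is the main obstacle, and it forces a different invariant. I would use the trace, which is again conjugation invariant: $\mathrm{Tr}(g^{-1}mg)=\mathrm{Tr}(m)=2+\mathrm{Tr}(R)$, where $R\in SO(d-1)$ is the rotation part of $m$. Since $\mathrm{Tr}(R)\leq d-1$ with equality only at $R=\mathrm{Id}$, one gets $\mathrm{Tr}(m)<d+1=\mathrm{Tr}(\mathrm{Id})$ strictly. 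Set $\eps:=(d+1-\mathrm{Tr}(m))/2>0$. Because $d$ is a left invariant Riemannian distance, it induces the manifold topology on $G\hookrightarrow\widetilde{G}$, so $h\mapsto\mathrm{Tr}(h)$ is continuous for $d$; hence there is $\delta>0$ with $d(\mathrm{Id},h)<\delta\Rightarrow|\mathrm{Tr}(h)-(d+1)|<\eps$. Every $h=g^{-1}mg$ satisfies $|\mathrm{Tr}(h)-(d+1)|=2\eps>\eps$, whence $d(\mathrm{Id},h)\geq\delta$ for all $g$, and $C:=\delta/2$ answers the lemma.

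Finally I note the two cases can be merged into one contradiction argument: if $\inf_g d(\mathrm{Id},g^{-1}amg)=0$, choose $g_n$ with $h_n:=g_n^{-1}amg_n\to\mathrm{Id}$ (using again that $d$ induces the topology, or $iv)$ of Proposition \ref{dprop}); since the characteristic polynomial is continuous and constant along the class, the characteristic polynomial of $am$ must equal that of $\mathrm{Id}$, namely $(\lambda-1)^{d+1}$, forcing all eigenvalues of $am$ to equal $1$, i.e. $u=0$ and $R=\mathrm{Id}$, hence $am=\mathrm{Id}$, a contradiction. I would nonetheless favour the explicit two-case argument, as it exhibits $C$ directly and reuses the freshly proved parts $vi)$ and $vii)$ of Proposition \ref{dprop}.
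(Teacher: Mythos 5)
Your proof is correct, but it replaces the paper's explicit metric computations by conjugation invariants, which is a genuinely different (and more conceptual) route. For $a\neq\mathrm{Id}$ the paper computes $\cosh r(g^{-1}amg)$ directly in polar coordinates and shows it is $\geq\cosh(u)$; you reach the same inequality $r(g^{-1}amg)\geq|u|$ via the spectral radius being conjugation invariant and dominated by the operator norm $e^{r}$, and both arguments then finish identically with items $vi)$ and $vii)$ of Proposition \ref{dprop}, yielding the same constant $\frac{\kappa^2}{\sqrt{\kappa^2+2\beta^2}}|u|$. For $a=\mathrm{Id}$, $m\neq\mathrm{Id}$ the divergence is real: the paper manipulates the triangle inequality together with $K$-right invariance and $vi)$ to produce the explicit lower bound $\frac{\kappa}{\kappa+\sqrt{\kappa^{2}+2\beta^{2}}}\,d(\mathrm{Id},m)$, whereas you observe that the trace is constant on the conjugacy class and strictly below $\mathrm{Tr}(\mathrm{Id})=d+1$, then invoke the fact that the left invariant Riemannian distance induces the manifold topology to extract a non-explicit $\delta$. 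Both are valid, since the lemma only asserts the existence of some $C>0$; the paper's version buys an explicit constant in terms of the metric parameters $\kappa,\beta$ and stays entirely inside the toolbox of Proposition \ref{dprop}, while yours is shorter and makes transparent that the statement is a conjugacy-class separation result (your characteristic-polynomial remark pushes this to its logical extreme). The only point to tighten is the passing appeal to $iv)$ of Proposition \ref{dprop} in the merged argument: to apply it you must already know that $h_n=\exp(\widetilde{X}_n)$ with $\widetilde{X}_n\in\mathcal{O}$, i.e.\ that $h_n$ is near $\mathrm{Id}$, so the clean justification there is the topological one you also give.
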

\begin{proof}[Proof of lemma \ref{lem3}]
Consider the polar decomposition $g = S(r, \theta ) R$. 
Suppose first that $ a= \mathrm{Id}$ and $m \neq \mathrm{Id}$. Then we get
\begin{align*}
d( \mathrm{Id},  g^{-1} m g )  &= d( \mathrm{Id}, S(r, -\theta ) m S(r, \theta ) ) = d( S(r, \theta) , \  m S(r, \theta )  ) =d( S(r, \theta) ,  S(r, m\theta ) m )   \\
& \geq \frac{\kappa}{ \sqrt{\kappa^{2} +2 \beta^{2}}} d(S(r, \theta), S(r, m\theta) )  \quad  \text{by} \ vi) \ \text{of Proposition \ref{dprop}} \\
& \geq \frac{\kappa}{ \sqrt{\kappa^{2} +2 \beta^{2}}} \left (  d(S(r, \theta), S(r,\theta)m^{-1} ) -d(S(r, \theta)m^{-1} , S(r, m\theta) )\right ) \\
& = \frac{\kappa}{ \sqrt{\kappa^{2} +2 \beta^{2}}} \left (  d( \mathrm{Id}, m^{-1}  ) - d(S(r, \theta), \ m S(r, \theta ) )  \right) \\
&=  \frac{\kappa}{ \sqrt{\kappa^{2} +2 \beta^{2}}} \left (  d( \mathrm{Id}, m ) -d( \mathrm{Id},  g^{-1} m g )   \right) 
\end{align*}
Thus $d( \mathrm{Id},  g^{-1} m g ) \geq \frac{\kappa}{\kappa +\sqrt{\kappa^{2} +2 \beta^{2}}} d( \mathrm{Id}, m ) >0$.

Suppose now $a \neq \mathrm{Id}$. Let $u \neq 0 $ such that $a= \exp ( u V_1)$, then an explicit computation gives:
\begin{align}
\cosh r( g^{-1}a m g) &=  \cosh(u) \left (  \cosh(r)^2 - ((m\theta)^1)^2 \sinh(r)^2 \right ) -\sinh(r)^2 \sum_{i=2}^{d} \theta^i (m\theta)^{i} \\
& = \cosh(u) + \left ( \cosh(u) (1- ((m\theta)^1)^2) - \sum_{i=2}^{d} \theta^i (m\theta)^{i} \right ) \sinh(r)^2  \\
& \geq \cosh(u)  + ( 1 -  \theta^t (m\theta) ) \sinh(r)^2 \quad \text{we used }   (m\theta)^1 = \theta^1 \\
& \geq \cosh(u).
\end{align}
Then by $vi)$ and $vii)$ of Proposition  \ref{dprop} it comes
\[
d( \mathrm{Id}, g^{-1}a m g ) \geq \frac{\kappa}{\sqrt{\kappa^2 + 2 \beta^2 }} \kappa u >0.
\]
\end{proof}
 Return to the proof of Theorem \ref{stable}.

\noindent {\bf Second case:  $\mathbf{X^0=0}$ but $ \mathbf{x^0 \neq 0}$ }. So $\widetilde{X} = ( X^{+}, x^{+} +x^{0})$ and explicitely  
\[
\exp (\widetilde{X}) = ( \exp (X^{+}) , \ x^{0} + x^{+} + \frac{X^{+} x^{0} }{2} ) = (\mathrm{Id},  \xi ) ( \exp (X^{+}), 0 ),
\]
 where we have set $\xi:=  x^{0} + x^{+} + \frac{X^{+} x^{0} }{2}$.

Thus
\begin{align*}
\tilde{g}_t^{-1} \exp(†\widetilde{Y}) \tilde{g}_t &= \tilde{h}_t^{-1} (e^{-t\alpha V_1}, 0) \exp (\widetilde{X} ) (e^{t\alpha V_1}, 0) \tilde{h}_t \\ 
&= ( \mathrm{Id}, h_t^{-1} e^{-t \alpha V_1} \xi ) (\exp( \mathrm{Ad}(h_{t}^{-1} e^{-t\alpha V_1}) X), \ 0  ),
\end{align*}
and
\begin{align*}
d( \mathrm{Id}, &\tilde{g}_t^{-1} \exp(†\widetilde{Y}) \tilde{g}_t) \geq d\left( \mathrm{Id}, \ ( \mathrm{Id}, h_t^{-1} e^{-t \alpha V_1} \xi) \right  )  - d( \mathrm{Id}, \ (\exp( \mathrm{Ad}(h_{t}^{-1} e^{-t\alpha V_1}) X), \ 0  )  ).
\end{align*}
As done previously in the first case, $d( \mathrm{Id}, \ (\exp( \mathrm{Ad}(h_{t}^{-1} e^{-t\alpha V_1}) X), \ 0  )  )$ converges exponentially fast to $0$  and it remains to prove that 
\begin{align} 
\liminf_{t \to \infty} d\left( \mathrm{Id}, \ ( \mathrm{Id}, h_t^{-1} e^{-t \alpha V_1} \xi) \right  )  > 0 \label{liminf2}.
\end{align}
Suppose by contradiction that we can find $s_t$ such that $d\left ( \mathrm{Id} ,  \left  ( \mathrm{Id} , \ h_{s_t}^{-1} e^{-s_{t} \alpha V_1}  (\xi) \right  )\right  )$ converges to $0$. By  $iv)$ of Proposition \ref{dprop}  for large $t$ 
\[
d\left ( \mathrm{Id} ,   ( \mathrm{Id} , \ h_{s_t}^{-1} e^{-s_{t} \alpha V_1}  (\xi ) )\right  ) \geq †\Vert  h_{s_t}^{-1} e^{-s_{t} \alpha V_1}  (\xi ) \Vert
\]

 Since $\frac{X^{+} x^{0} }{2} \in U^{+}$ we obtain directly that $q( \xi ) = q( x^{0})$ which is negative since $x^{0}$ is supposed to be non zero. But 
 \begin{align*}
\Vert  h_{s_t}^{-1} e^{-s_{t} \alpha V_1}  (\xi ) \Vert^{2} &=  \gamma^{2} \left ( \sum_{i =1}^{d} q \left ( h_{s_t}^{-1} e^{-s_{t} \alpha V_1}  (\xi ) , e_i \right )^2  \right )  + \delta^2 q \left ( h_{s_t}^{-1} e^{-s_{t} \alpha V_1}  (\xi ) , e_0 \right )^2   \\
& \geq \min(\gamma, \delta)^2  \left ( \sum_{i =0}^{d} q \left ( h_{s_t}^{-1} e^{-s_{t} \alpha V_1}  (\xi ) , e_i \right )^2  \right ) \\
&= \min(\gamma, \delta)^2  \left ( 2 q( h_{s_t}^{-1} e^{-s_{t} \alpha V_1}  (\xi ) , e_0)^2 - q \left (  h_{s_t}^{-1} e^{-s_{t} \alpha V_1}  (\xi ) \right )^2  \right )\geq -  \min(\gamma, \delta)^2 q( x^{0}) >0. 
 \end{align*} 
 \end{proof}
 
\subsection{Projection on $\Hyp^d \times \R^{1,d}$ of the stable manifolds}
 
 We explicit here the projection of $\mathcal{V}_{\infty}^{-}$ on $\Hyp^{d} \times \R^{1,d}$. Recall that by definition an element of $\mathcal{V}_{\infty}^{-}$ is of the form $\tilde{g}_\infty \exp (X, x) \tilde{g}_\infty ^{-1}$ where  $(X, x) \in \widebar{\mathcal{N}} \times U^{+}$. We deduce, since in this case $ \exp (X, x) = ( \exp(X) , x) $,  that an element of $\pi ( \mathcal{V}_{\infty}^{-} ) $ is of the form
 \begin{align} \label{skew}
\left ( n_\infty \exp(X) n_\infty^{-1} (e_0 ), \ u n_{\infty}(e_0 +e_1)  + \lambda_\infty \left ( \mathrm{Id} - n_\infty \exp(X) n_\infty^{-1} \right  )  (e_0 -e_1) \right ),
 \end{align}
 where $X$ lies in  $\widebar{\mathcal{N} }$ and $u \in \R $.

 Since  $\exp(X)(e_0 + e_1) =e_0 + e_1$ for $X \in \widebar{\mathcal{N}}$ we obtain 
 \[
 q( n_\infty \exp(X) n_\infty^{-1} (e_0), n_\infty (e_0 + e_1) ) = q(n_\infty^{-1} e_0, e_0 +e_1 ) = q( e_0 , n_\infty (e_0 +e_1) )
 \]
 and thus when $X$ describes $\widebar{\mathcal{N}}$ then $ n_\infty \exp(X) n_\infty^{-1} (e_0)$ draws the intersection between $\Hyp^d$ and the affine hyperplan passing by $e_0$ and $q$-orthogonal to $n_\infty (e_0 +e_1)$. This submanifold of $\R^{1,d}$  is a parabolo\"id of codimension 2 and is mapped by $\mathbf{p}$ (the projection onto the projective space) on a sphere tangent at $\partial \Hyp^d$ in $\theta_\infty$ and passing by $\mathbf{p}(e_0)$. It is called the \emph{horosphere} tangent at $\theta_\infty$ and passing by $e_0$ and is denoted by $\mathcal{H}_\infty$. 
 
 Moreover, since 
 \[
 q( n_\infty \exp(X) n_\infty^{-1} (e_0 -e_1), n_\infty (e_0 + e_1) ) = q( e_0 -e_1, e_0 +e_1) = 0,
 \]
 we get that when $X$ describes $\mathcal{N}$ then $n_\infty \exp(X) n_\infty^{-1} (e_0 -e_1)$ describes the intersection between the light cone $\{ \xi , q(\xi) =0 \}$ and the hyperplan passing by $e_0 -e_1$ and $q$-orthogonal to $n_\infty (e_0 +e_1)$. Thus, when $X$ describes $\mathcal{N} $ then $\left ( \mathrm{Id} - n_\infty \exp(X) n_\infty^{-1} \right  )  (e_0 -e_1)$ draws a parabolo\"id $\mathcal{P}_\infty$ in the hyperplan $q$-orthogonal to $n_\infty (e_0 +e_1)$. For each $\dot{\xi}$ in the horosphere $\mathcal{H}_\infty$ corresponds a unique $X_{\dot{\xi}}  \in \mathcal{N}$ such that $\dot{\xi}= n_\infty \exp(X_{\dot{\xi}}) n_\infty^{-1} (e_0)$ and the one-to-one function $ \psi: \dot{\xi} \mapsto \left ( \mathrm{Id} - n_\infty \exp(X_{\dot{\xi}}) n_\infty^{-1} \right  )  (e_0 -e_1)$ maps $\mathcal{H}_{\infty}$ on $\mathcal{P}_{\infty}$. 
 
 Then by \eqref{skew}, we obtain the following one-to-one map 
 \[
 \begin{matrix}
 \mathcal{H}_\infty  \times \langle n_{\infty}(e_0 +e_1) \rangle  & \longrightarrow & \pi( \mathcal{V}_{\infty}^{-}) \\
 (\dot{\xi}, \xi ) & \longmapsto & ( \dot{\xi}, \xi + \lambda_\infty \psi ( \dot{\xi}) )
 \end{matrix}
 \]
 and $\pi( \mathcal{V}_{\infty}^{-})$ is a skew product of the line $\langle n_{\infty}(e_0 +e_1) \rangle$ with the horosphere $\mathcal{H}_\infty$.
 
 \begin{figure}[h]
 \centering
 \scalebox{1} 
{
\begin{pspicture}(0,-3.6111991)(15.1629095,3.6111991)
\definecolor{color16}{rgb}{1.0,0.0,0.2}
\definecolor{color8}{rgb}{0.2,0.2,1.0}
\pscircle[linewidth=0.03,dimen=outer](1.9595605,-0.5791992){1.72}
\psdots[dotsize=0.12](1.9195606,-0.7791992)
\pscircle[linewidth=0.04,linecolor=red,dimen=outer](2.6295605,-0.16919918){0.91}
\psdots[dotsize=0.12](3.3995605,0.3008008)
\usefont{T1}{ptm}{m}{n}
\rput(3.7624707,0.6658008){$\theta_\infty$}
\usefont{T1}{ptm}{m}{n}
\rput(1.7724707,-1.0941992){$p(e_0)$}
\usefont{T1}{ptm}{m}{n}
\rput(1.7624707,2.2458007){$p(\mathcal{H}_\infty)$}
\psline[linewidth=0.04cm,linecolor=color8](7.379561,-3.2591994)(12.95956,2.3208008)
\psline[linewidth=0.024cm](8.95956,-1.6591991)(5.7395606,1.5808008)
\usefont{T1}{ptm}{m}{n}
\rput(8.572471,-1.7541991){$0$}
\usefont{T1}{ptm}{m}{n}
\rput(10.532471,3.0858006){$\langle n_\infty(e_0 +e_1) \rangle$}
\psline[linewidth=0.024cm](7.0995607,-0.87919915)(4.899561,-3.0191994)
\psline[linewidth=0.024cm](4.879561,-3.0591993)(7.5595603,-3.599199)
\psbezier[linewidth=0.038,linecolor=red](8.95956,-1.6591991)(10.53956,-2.099199)(12.099561,-0.5391992)(13.939561,1.3008007)
\psbezier[linewidth=0.02,linecolor=color16,linestyle=dashed,dash=0.16cm 0.16cm](8.979561,-1.6791992)(7.6795607,-1.2191992)(10.899561,2.520801)(11.279561,2.8408008)
\psbezier[linewidth=0.024,linecolor=red,linestyle=dotted,dotsep=0.16cm](7.6795607,-2.9991992)(8.359561,-3.2591994)(9.619561,-3.3991992)(12.659561,-0.85919917)
\psbezier[linewidth=0.024,linecolor=red,linestyle=dotted,dotsep=0.16cm](7.65956,-2.979199)(6.9395604,-2.7391992)(8.12,0.079199165)(9.48,2.2791991)
\psbezier[linewidth=0.024,linestyle=dashed,dash=0.16cm 0.16cm](5.77956,1.5208008)(6.15956,1.2208009)(11.479561,1.1408008)(12.139561,1.5008008)
\psbezier[linewidth=0.024](5.75956,1.5808008)(6.1795607,1.9408008)(11.79956,2.040801)(12.139561,1.5208008)
\psbezier[linewidth=0.04,linecolor=red](10.4195595,1.8608009)(10.559561,2.0808008)(10.979561,2.540801)(11.25956,2.8208008)
\usefont{T1}{ptm}{m}{n}
\rput(13.56247,-1.3741992){$\mathcal{P}_\infty$}
\psline[linewidth=0.024cm,arrowsize=0.05291667cm 2.0,arrowlength=1.4,arrowinset=0.4]{->}(8.95956,-1.6591991)(8.95956,-0.61919916)
\usefont{T1}{ptm}{m}{n}
\rput(9.4824705,-0.3941992){$e_0$}
\psbezier[linewidth=0.02,arrowsize=0.05291667cm 2.0,arrowlength=1.4,arrowinset=0.4]{->}(1.14,1.8991991)(1.14,1.1191992)(1.3,0.73919916)(1.86,0.41919917)
\psbezier[linewidth=0.02,arrowsize=0.05291667cm 2.0,arrowlength=1.4,arrowinset=0.4]{->}(10.22,2.7191992)(10.64,1.9991992)(11.46,1.9791992)(12.32,1.9191992)
\psbezier[linewidth=0.02,arrowsize=0.05291667cm 2.0,arrowlength=1.4,arrowinset=0.4]{->}(13.9,-1.0408008)(14.06,-0.64080083)(14.18,0.019199168)(13.3,0.33919916)
\psbezier[linewidth=0.04,linecolor=red](8.68,-1.4008008)(8.7,-1.4608009)(8.82,-1.6208009)(8.96,-1.6608008)
\psbezier[linewidth=0.024,linecolor=red,linestyle=dotted,dotsep=0.16cm](10.85956,0.20080073)(11.68,-0.20080084)(14.24,2.099199)(15.02,3.1991992)
\psbezier[linewidth=0.024,linecolor=color16,linestyle=dotted,dotsep=0.16cm](10.8395605,0.22080103)(10.11956,0.46080074)(12.26,3.2191992)(12.72,3.599199)
\end{pspicture} 
}
 \caption{ $\pi( \mathcal{V}_{\infty}^{-}) $ is a skew-product of a horosphere with a line}
 \end{figure}
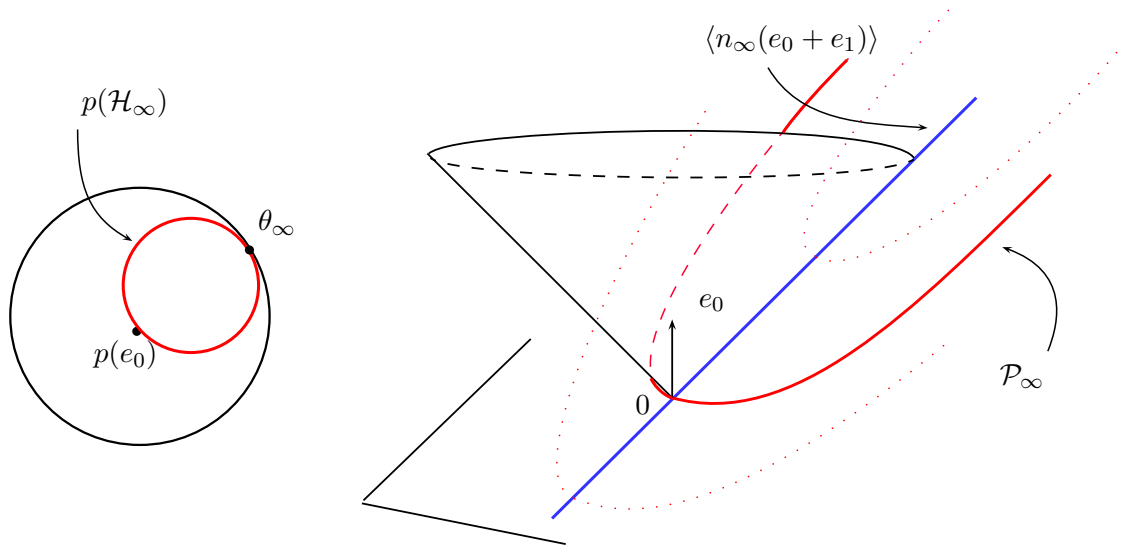
\newpage
\bibliographystyle{plain}
\bibliography{biblio}
\end{document}